\documentclass[usletter,10pt]{article}

\pdfoutput=1


\usepackage[utf8x]{inputenc}
\usepackage[T1]{fontenc}
\usepackage[english]{babel}              
\usepackage{graphicx}
\usepackage{amssymb,amsthm,amsfonts,amsmath}
\usepackage{mathtools}
\mathtoolsset{showonlyrefs=true,showmanualtags=false} 
\usepackage{hyperref}
\hypersetup{
    colorlinks=true,
    linkcolor=black,          
    citecolor=black,        
    filecolor=black,      
    urlcolor=black  
}
\usepackage{url}
\usepackage{subfigure}
\usepackage{enumerate}
\usepackage[normalem]{ulem}

\usepackage{geometry}

\geometry{margin=1.15in}

\usepackage{rotating} 
\usepackage[section]{algorithm}
\usepackage{algorithmic}

\usepackage{xspace}

\usepackage[small,bf]{caption}

\usepackage{multirow} 
\usepackage{booktabs} 




\theoremstyle{plain}

\newtheorem{thm}{Theorem}[section]

\newtheorem{lem}[thm]{Lemma}
\newtheorem{prop}[thm]{Proposition}
\newtheorem{setting}[thm]{Setting}

\theoremstyle{definition}
\newtheorem{defn}[thm]{Definition}

\theoremstyle{remark}
\newtheorem{rmrk}[thm]{Remark}
\numberwithin{equation}{section}

\newcommand{\stkout}[1]{\ifmmode\text{\sout{\ensuremath{#1}}}\else\sout{#1}\fi}




\DeclareMathOperator{\Span}{span}
\DeclareMathOperator{\supp}{supp}

\DeclareMathOperator{\Prob}{\mathbb{P}}

\DeclareMathOperator{\clos}{clos}
\DeclareMathOperator{\zero}{zero}

\DeclareMathOperator{\sign}{sign}

\newcommand{\Expe}{\mathbb{E}}

\newcommand{\per}{\textup{per}}


\newcommand{\rea}{\rho}
\newcommand{\dif}{\eta}



\newcommand{\gammavec}{\boldsymbol \gamma}

\newcommand{\onevec}{\mathbf{1}}

\newcommand{\iunit}{\text{i}}
\newcommand{\enum}{\text{e}}
\newcommand{\RR}{\mathbb{R}}

\newcommand{\NN}{\mathbb{N}}
\newcommand{\ZZ}{\mathbb{Z}}

\newcommand{\de}{\,\text{d}}
\DeclareMathOperator{\Real}{Re}
\DeclareMathOperator{\Imag}{Im}


\newcommand{\Qcal}{\mathcal{Q}}


\newcommand{\corsing} {\textsf{CORSING}\xspace}

\newcommand{\WF}      {$\mathcal{WF}$\xspace}




\usepackage[usenames,dvipsnames]{xcolor}

\newcommand{\bm}[1]{{\boldsymbol{#1}}}



\newcommand{\ani}{\textnormal{ani}}
\newcommand{\iso}{\textnormal{iso}}

\newcommand{\Dim}{n}

\DeclareMathOperator{\scal}{scal} 


\date{}


\title{Wavelet-Fourier \corsing techniques for multi-dimensional advection-diffusion-reaction equations}
\author{S. Brugiapaglia\footnote{Department of Mathematics and Statistics, Concordia University, Montr\'{e}al, QC, Canada.\newline E-mail address (corresponding author): \texttt{simone.brugiapaglia@concordia.ca}}, S. Micheletti\footnote{MOX, Dipartimento di Matematica, Politecnico di Milano, 20133 - Milano, Italy.\newline E-mail address: \texttt{stefano.micheletti@polimi.it}}, F. Nobile\footnote{MATHICSE-CSQI, \'{E}cole Polytechnique F\'{e}d\'{e}rale de Lausanne, Lausanne, CH-1015, Switzerland.\newline E-mail address: \texttt{fabio.nobile@epfl.ch}},  S. Perotto\footnote{MOX, Dipartimento di Matematica, Politecnico di Milano, 20133 - Milano, Italy.\newline 
E-mail address: \texttt{simona.perotto@polimi.it}}}

\begin{document}



\maketitle

\begin{abstract}
We present and analyze a novel wavelet-Fourier technique for the numerical treatment of multi-dimensional advection-diffusion-reaction equations based on the \textsf{CORSING} (\textsf{COmpRessed SolvING}) paradigm. Combining the Petrov-Galerkin technique with the compressed sensing approach, the proposed method is able to approximate the largest coefficients of the solution with respect to a biorthogonal wavelet basis. Namely, we assemble a compressed discretization based on randomized subsampling of the Fourier test space and we employ sparse recovery techniques to approximate the solution to the PDE. In this paper, we provide the first rigorous recovery error bounds and effective  recipes for the implementation of the \textsf{CORSING} technique in the multi-dimensional setting. Our theoretical analysis relies on new estimates for the local $a$-coherence, which measures interferences between wavelet and Fourier basis functions with respect to the metric induced by the PDE operator. The stability and robustness of the proposed scheme is shown by numerical illustrations in the one-, two-, and three-dimensional case. 
\end{abstract}


\section{Introduction}

This paper deals with the theoretical analysis and the numerical implementation of a recently-introduced paradigm in numerical approximation of Partial Differential Equations (PDEs), named \textsf{COmpRessed SolvING} (in short, \corsing). The \corsing method has been  proposed and studied in \cite{PhDThesis,Brugiapaglia2015,Brugiapaglia2018} for the solution of linear PDEs set in Hilbert spaces, and combines the Petrov-Galerkin discretization techniques with compressed sensing \cite{Candes2006,Donoho2006}. Assuming the sparsity of the solution with respect to a suitable trial function basis, the idea is to build a reduced Petrov-Galerkin discretization of the weak formulation of the problem by considering a randomly subsampled test subspace, and then to recover a sparse approximation to the solution via sparse recovery techniques, such as $\ell^1$-minimization or the greedy Orthogonal Matching Pursuit (OMP) algorithm. As discussed in \cite{Brugiapaglia2018}, the main advantages of \corsing with respect to adaptive finite element techniques for PDEs are that no \emph{a posteriori} error estimators are needed and that assembly and recovery via OMP are fully parallelizable.

In this paper, we focus on the scenario of multi-dimensional advection-diffusion-reaction (ADR) equations on a torus, and we employ biorthogonal wavelets as trial functions basis and a Fourier basis as test functions. In this way, we introduce a hybrid  Wavelet-Fourier technique named \corsing \WF, which is able to approximate the largest wavelet coefficients of the solution to the PDE by sampling randomly the Fourier test space, using a suitable probability measure. 

It is worth noticing that the results showed here can be extended to the nonperiodic case by considering biorthogonal wavelets on the interval (and on the hypercube) (see \cite{Dahmen1997,Pabel2015,Urban2008}) as the trial functions basis and the sine basis $\{\sin(\pi k x)\}_{k \in \mathbb{N}}$ (or a tensorized version of it) as the test functions basis. However, we decided to stick to the periodic case to make the theoretical exposition free of an excessive quantity of technical details regarding the construction of wavelets at the boundary. In this respect, the present work should be considered as  a first step towards the setup of \corsing in practical applications. We also observe that the construction of wavelets over general domains is not a straightforward task. Hence, the \textsf{CORSING} $\mathcal{WF}$ shares this difficulty with classic wavelets methods for PDEs. We refer to \cite{CossIGA} for an extension and implementation of the \textsf{CORSING} method to domains with more general geometries via the IsoGeometric Analysis principle \cite{hughes2005isogeometric}.

The choice of biorthogonal wavelet functions is motivated by the need of working with trial and test functions bases that satisfy the \emph{Riesz basis} property \cite{Brugiapaglia2018}. Roughly speaking, this assumption guarantees a control on the condition number of the Petrov-Galerkin discretization matrix, which is crucial for a successful application of the compressed sensing paradigm (see, e.g., \cite[Theorem 1.21]{PhDThesis} or \cite[Theorem 3.6]{adcock2019uniform}). In \cite{Brugiapaglia2018}, the authors considered the hierarchical basis of hat functions and the sine functions basis as trial and test basis, respectively. While this choice ensures the Riesz basis property in the one-dimensional setting, tensorizing these two bases breaks the Riesz basis property in the multi-dimensional framework. Indeed, the tensorization of the sine functions basis is a Riesz basis (up to suitable diagonal rescaling) in any dimension, while the Riesz property is broken for the tensorized hierarchical basis of hat functions. However, thanks to the so-called \emph{norm equivalence} property, tensorized biorthogonal wavelets form a Riesz basis in any dimension, hence providing a remedy for this issue.

Although compressed sensing is becoming a standard paradigm for signal processing applications, understanding its full potential and limitations in scientific computing is still object of active work. This paper moves a step forward in this direction. In a fast-growing literature, it is worth mentioning here the applications of compressed sensing to numerical approximation of high-dimensional functions (see, e.g., \cite{Adcock2017correcting, Adcock2017,Chkifa2018,Rauhut2012}) and of parametric PDEs, with special emphasis on uncertainty quantification (see, e.g., \cite{Bouchot2017multi,Doostan2011,Rauhut2017,Yang2013}). In these cases, a smooth function, which can be the quantity of interest of a parametric PDE, is approximated with respect to a global sparsity basis like orthogonal polynomials by means of random pointwise observations. The PDE solver is a black box used to evaluate the quantity of interest for different values of the parameters, and  compressed sensing is performed \emph{outside} the black box. Our focus is different, since the \corsing method takes advantage of the compressed sensing paradigm \emph{inside} the black box, i.e., to solve the PDE itself given a particular choice of the parameters. 


The compressed sensing principle has also been recently employed for the efficient numerical approximation of diffusion equations via Sturm-Liouville spectral collocation  in \cite{brugiapaglia2018compressive}. Finally, we observe that wavelet-Fourier techniques are widely used in signal processing applications (see, e.g., \cite{Adcock2015,BreakingBarrier,Krahmer2014} for theoretical contributions in this direction). Yet, to the best of our knowledge, this paper is the first comprehensive study of this type of techniques in the context of numerical approximation of multidimensional PDEs.



\subsection{Main contributions}
\label{sec:main_contr}

The main contribution of this paper is threefold. First, we present and study the first hybrid wavelet-Fourier discretization for ADR equations in arbitrary dimension on the torus based on Petrov-Galerkin discretization and on compressed sensing, named \corsing \WF\ (see Section~\ref{sec:CORSING}). Moreover, in Section~\ref{sec:localcoherence} we show the applicability of the theoretical analysis in \cite{Brugiapaglia2018} to $n$-dimensional ADR equations with constant coefficients for $n \geq 1$ and with nonconstant coefficients for $n = 1$.
Finally, in Section~\ref{sec:numerics} we provide a Matlab$^\text{\textregistered}$ implementation of the \corsing \WF method for $n$-dimensional ADR equations, with $n = 1,2,3$.

In view of the aforementioned contributions, we will focus on the following three key technical issues necessary to implement and quantify the performance of the \corsing \WF method (more details on these three issues are given in Section~\ref{sec:i,ii,iii}):
\begin{itemize}
\item [(i)] Find  a suitable truncation condition on the Fourier test space in order to guarantee stability of the resulting Petrov-Galerkin discretization.
\item [(ii)] Give lower bounds to the sampling complexity, i.e., the minimum number of randomly selected Fourier test functions needed to recover the $s$ dominant coefficients of the solution in the wavelet expansion.
\item [(iii)] Provide explicit expressions for the probability distribution on the Fourier test space needed for the random selection of the basis functions.
\end{itemize}
In order to address issues (i), (ii), and (iii), we will take advantage of the general framework given in \cite{Brugiapaglia2018} for the analysis of \corsing, based on the so-called local $a$-coherence, which can be interpreted as a measure of the angle between the wavelet trial functions and the Fourier test functions with respect to the metric induced by the sesquilinear form associated with the ADR problem (see Definition~\ref{def:mu}). In view of this, our main  efforts will be aimed at producing upper bounds to the local $a$-coherence in Section~\ref{sec:localcoherence} for the specific ADR problems addressed. In particular, we provide local $a$-coherence upper bounds for the 1D case with nonconstant coefficients in Theorem~\ref{thm:mu_AReq1D}, and for the multi-dimensional case with constant coefficients, when employing anisotropic and isotropic tensor product wavelets in Theorems~\ref{thm:mu_bound_multi_ani} and \ref{thm:CORSING_rec_multi_iso}, respectively. As a consequence, we derive explicit and computable answers to  (i), (ii), and (iii) and provide recovery error guarantees for the \corsing \WF method in Theorem~\ref{thm:CORSING_rec_1D} (1D case), Theorem~\ref{thm:CORSING_rec_multi_ani} (multi-dimensional anisotropic case), and Theorem~\ref{thm:CORSING_rec_multi_iso} (multi-dimensional isotropic case). Numerical results in Section~\ref{sec:numerics} confirm the theoretical findings.

\section{Problem setting}
\label{sec:problem}

In this section, we recall some basics on periodic Sobolev spaces and discuss ADR problems in weak form with periodic boundary conditions.

\paragraph{Notation}
We denote by $\NN := \{1,2,3,\ldots\}$,  $\NN_0 := \{0\} \cup  \NN $, and $\ZZ:=\NN_0 \cup (-\NN)$.  We define $[k]:= \{1,\ldots,k\}$ and $[k]_0 := \{0\} \cup [k]$. The notation $X \lesssim Y$ means $X \leq C Y$, with $C >0$ a constant independent of $X$ and $Y$; $X \sim Y$ means that $X \lesssim Y$ and $X \gtrsim Y$ hold simultaneously. By $X \propto Y$, we understand that there exists a constant $C > 0$ independent of $X$ and $Y$ such that $X = CY$. Given a multi-index $\bm{r}$, we denote its 2-norm by $|\bm{r}|$. For every $z \in \mathbb{C}$, $|z|$ is its modulus, $\overline{z}$ is its complex conjugate, and $\Real(z)$ and $\Imag(z)$ denote its real and imaginary part, respectively. Given a vector $\bm{x} \in \mathbb{C}^n$ and $1\leq p < +\infty$, then $\|\bm{x}\|_p := (\sum_{j =1}^n |x_j|^p)^{1/p}$, $\supp(\bm{x}):= \{j \in [n] : x_j \neq 0\}$, and $\|\bm{x}\|_0 := |\supp(\bm{x})|$. Inequalities between vectors in $\mathbb{R}^n$ have to be read componentwise; for example, $\bm{x} \leq \bm{y}$ means $x_i \leq y_i$ for every $i$. The vectors of the canonical basis of $\mathbb{C}^n$ are denoted by $\bm{e}_1,\ldots,\bm{e}_n$ and $\bm{x}\cdot\bm{y} := x_1\overline{y}_1 + \cdots + x_n \overline{y}_n$ is the standard inner product of $\mathbb{C}^n$. Given a matrix $A \in \mathbb{C}^{m \times n}$, $A^*$ denotes its conjugate transpose. The set of sequences indexed by integer multi-indices is denoted by $\ell(\mathbb{Z}^n) := \{(x_{\bm{j}})_{\bm{j} \in \mathbb{Z}^n} : x_{\bm{j}} \in \mathbb{C}, \, \forall \bm{j} \in \mathbb{Z}^n\}$.

\subsection{Sobolev spaces} 
\label{sec:Sobolev}
We start by recalling some standard notions about periodic Sobolev spaces. Let $\Dim \in \mathbb{N}$ and consider the domain 
$$
\mathcal{D} = (0,1)^\Dim \subseteq \mathbb{R}^\Dim. 
$$
Given $k \in \mathbb{N}_0$, let $H^k(\mathcal{D}) = H^k(\mathcal{D}; \mathbb{C})$ be the Sobolev space of order $k$ of complex-valued functions over $\mathcal{D}$, being understood  that $H^0(\mathcal{D}) = L^2(\mathcal{D})$. Moreover, we denote the $H^k(\mathcal{D})$-inner product by
\begin{equation}
(u,v)_{k} 
:= \sum_{\substack{\bm{\alpha} \in [k]^\Dim_0 \\ \alpha_1 + \cdots + \alpha_\Dim \leq k}} \int_{\mathcal{D}} D^\bm{\alpha} u(\bm{x}) \overline{D^\bm{\alpha} v(\bm{x})} \de\bm{x}, 
\end{equation}
where $D^{\bm{\alpha}} 
:= \frac{\partial^{\alpha_1 +\cdots +\alpha_\Dim}}{\partial x_1^{\alpha_1} \cdots \partial x_\Dim^{\alpha_\Dim}}$ is the derivative in the sense of distributions and $[k]_0^n:=[k]_0 \times \cdots \times [k]_0$ ($n$ times). For the sake of simplicity, we use the shorthand notation $(\cdot,\cdot):=(\cdot,\cdot)_{0}$ for the $L^2(\mathcal{D})$-inner product. The $H^k(\mathcal{D})$-norm is defined as $\|\cdot\|_{H^k(\mathcal{D})}^2 = (\cdot,\cdot)_{k}$ and the $H^k(\mathcal{D})$-seminorm is given by 
$$
|u|_{H^k(\mathcal{D})} ^2
:= \sum_{\substack{\bm{\alpha} \in [k]^\Dim_0 \\ \alpha_1 + \cdots + \alpha_\Dim = k}} \|D^{\bm{\alpha}}u\|^2_{L^2(\mathcal{D})}.
$$
The periodic Sobolev space of order $k$, $H^k_\per(\mathcal{D}) \subseteq H^k(\mathcal{D})$, is then defined as
$$
H^k_\per(\mathcal{D}) 
:= \clos_{\|\cdot\|_{H^k(\mathcal{D})}}(C^\infty_\per(\mathcal{D})),
$$
where 
$$
C^\infty_\per(\mathcal{D}) 
:= \{v|_{\mathcal{D}}:  v\in C^\infty(\mathbb{R}^\Dim), \; v(\bm{x} + \bm{e}_i) = v(\bm{x}), \; \forall \bm{x} \in \mathbb{R}^n, \; \forall i \in [\Dim] \}.
$$
Notice that $H^0_\per(\mathcal{D}) \equiv L^2(\mathcal{D})$ since $C^\infty_\per(\mathcal{D})$ is dense in $L^2(\mathcal{D})$. Now, let  $u \in H^k_\per(\mathcal{D})$ and consider its Fourier series expansion
\begin{equation}
u(\bm{x}) = \sum_{\bm{r} \in \ZZ^\Dim} c_{\bm{r}} e^{2\pi\iunit \bm{r} \cdot \bm{x}}, 
\quad \text{with }
c_{\bm{r}} := \int_{\mathcal{D}}u(\bm{x}) e^{-2\pi \iunit \bm{r} \cdot \bm{x}} \de \bm{x}.
\end{equation}
Then, the following norm equivalence holds: 
\begin{equation}
\label{eq:equivHmFourier}
\|u\|_{H^k(\mathcal{D})}^2 \sim \sum_{\bm{r} \in \ZZ^\Dim}  (1+|\bm{r}|^{2k}) |c_\bm{r}|^2, \quad \forall u \in H^k_{\per}(\mathcal{D}), \quad \forall k \in \mathbb{N}_0. 
\end{equation}
Moreover, we define $H^{-1}_\per(\mathcal{D}) := [H^1_\per(\mathcal{D})]^*$, where the superscript $^*$ denotes the dual space. For the proofs of these results and for more details about periodic Sobolev spaces, we refer the reader to  \cite{Adams2003, Taylor2011, Temam1995}. 

\subsection{Advection-diffusion-reaction problems}
\label{sec:ADR}
Consider the sesquilinear form $a : H^1_\per(\mathcal{D}) \times H^1_\per(\mathcal{D}) \to \mathbb{C}$ defined as
\begin{equation}
a(u,v) 
:= \int_{\mathcal{D}} 
\left[\eta(\bm{x}) \nabla u(\bm{x}) \cdot \overline{\nabla v(\bm{x})}
+ \bm{\beta}(\bm{x})\cdot \nabla u(\bm{x}) \overline{v(\bm{x})} 
+ \rho(\bm{x}) u(\bm{x}) \overline{v(\bm{x})}\right] \de \bm{x},
\end{equation}
where $\dif, \rea : \mathcal{D} \to \mathbb{R}$ are the diffusion and reaction coefficients, respectively,  and $\bm{\beta} : \mathcal{D} \to \mathbb{R}^\Dim$ is the advective field.
Then, the weak formulation of the periodic ADR equation reads
\begin{equation}
\label{eq:weakADR}
\text{find } u \in H^1_\per(\mathcal{D}): \quad 
a(u,v) = (f,v), 
\quad \forall v \in H^1_\per(\mathcal{D}),
\end{equation} 
where $f : \mathcal{D} \to \mathbb{R}$ is a forcing term. Although we are interested in the case of real-valued coefficients $\eta$, $\rho$, $\bm{\beta}$, and $f$, the bilinear form $a(\cdot,\cdot)$ is defined over complex-valued Hilbert spaces to allow us the use of the Fourier basis in Section~\ref{sec:CORSING}.

We recall that the coercivity constant of $a(\cdot, \cdot)$ is the largest $\alpha > 0$ such that
\begin{equation}
|a(u,u)| \geq \alpha \|u\|_{H^1(\mathcal{D})}^2, \quad \forall u \in H^1_\per(\mathcal{D}),
\end{equation}
and the continuity constant of $a(\cdot,\cdot)$ is the smallest constant $\mathcal{A} >0 $ such that
\begin{equation}
|a(u,v)| \leq \mathcal{A} \|u\|_{H^1(\mathcal{D})} \|v\|_{H^1(\mathcal{D})}, \quad \forall u,v \in H^1_\per(\mathcal{D}).
\end{equation}
In the following proposition, we provide conditions on the coefficients $\eta,\bm{\beta},\rho$ sufficient to ensure the well-posedness of problem \eqref{eq:weakADR}. A proof of this result is provided in \cite{brugiapaglia2018wavelet_supplementary}.
\begin{prop}[Well-posedness of the periodic ADR problem]
\label{prop:ADRwellposed}
Let $\eta,\rho \in L^\infty(\mathcal{D})$ and $\bm{\beta} \in [L^\infty(\mathcal{D})]^\Dim$ be such that $\bm{\beta}$ is one-periodic with respect to each variable. Moreover, assume that there exist two constants $\eta_{\min},\zeta>0$ such that 
$$
\eta \geq \eta_{\min},
\quad 
-\frac12 \nabla \cdot \bm{\beta} + \rho \geq \zeta,  \quad  \text{a.e.\ in } \mathcal{D}.
$$
Then, for every $f \in H^{-1}_\per(\mathcal{D})$, the weak problem \eqref{eq:weakADR} admits a unique solution $u$ such that  
\begin{equation}
\label{eq:aprioriest}
\|u\|_{H^1(\mathcal{D})} \leq \frac{1}{\alpha} \|f\|_{H^{-1}(\mathcal{D})}, 
\end{equation}
with $\alpha \geq \min\{\eta_{\min},\zeta\}$ the coercivity constant of $a(\cdot,\cdot)$. Moreover, the continuity constant of $a(\cdot,\cdot)$ satisfies
$$
\mathcal{A} 
\leq 
\max\left\{\|\eta\|_{L^\infty(\mathcal{D})}, 
\sup_{\bm{x}\in \mathcal{D}}\|\bm{\beta}(\bm{x})\|_2, 
\|\rho\|_{L^\infty(\mathcal{D})}\right\}.
$$
\end{prop}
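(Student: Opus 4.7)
The plan is to verify continuity and coercivity of $a(\cdot,\cdot)$ on $H^1_\per(\mathcal{D})$, and then invoke the complex version of the Lax--Milgram lemma to obtain existence, uniqueness, and the a priori bound in one stroke.

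\textbf{Continuity.} For $u,v \in H^1_\per(\mathcal{D})$, I would apply the Cauchy--Schwarz inequality pointwise (for the diffusion and advection terms, treating $\eta \nabla u \cdot \overline{\nabla v}$ and $(\bm{\beta}\cdot \nabla u)\overline{v}$ with $|\bm{\beta}(\bm{x})|_2$) and then in $L^2(\mathcal{D})$. This bounds the three summands by $\|\eta\|_{L^\infty}\|\nabla u\|_{L^2}\|\nabla v\|_{L^2}$, $\sup_{\bm{x}}\|\bm{\beta}(\bm{x})\|_2 \, \|\nabla u\|_{L^2}\|v\|_{L^2}$, and $\|\rho\|_{L^\infty}\|u\|_{L^2}\|v\|_{L^2}$, respectively. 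Factoring out the maximum of the three $L^\infty$-type quantities and applying the discrete Cauchy--Schwarz inequality to the sum of the three products of $L^2$-norms yields the stated bound on $\mathcal{A}$. This part is routine.

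\textbf{Coercivity.} Here lies the only technical point: the advection term $\int_\mathcal{D} (\bm{\beta}\cdot\nabla u)\overline{u}\,d\bm{x}$ must be handled for complex-valued $u$. I would take the real part, use the identity
\begin{equation}
\Real\bigl[(\bm{\beta}\cdot\nabla u)\overline{u}\bigr] = \tfrac{1}{2}\,\bm{\beta}\cdot \nabla |u|^2
\end{equation}
(which follows by splitting $u$ into real and imaginary parts), and integrate by parts. Periodicity of $\bm{\beta}$ and of $|u|^2$ (for $u \in H^1_\per(\mathcal{D})$) ensures that no boundary contributions appear, giving $\Real \int_\mathcal{D}(\bm{\beta}\cdot\nabla u)\overline{u}\,d\bm{x} = -\tfrac{1}{2}\int_\mathcal{D}(\nabla\cdot\bm{\beta})|u|^2\,d\bm{x}$. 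Here the identity $\bm{\beta}\cdot\nabla|u|^2 = \nabla\cdot(\bm{\beta}|u|^2) - (\nabla\cdot\bm{\beta})|u|^2$ is used in the distributional sense, which is licit because the hypothesis $-\tfrac{1}{2}\nabla\cdot \bm{\beta}+\rho \ge \zeta$ a.e.\ implicitly requires $\nabla\cdot\bm{\beta}$ to be a bounded measurable function. Combining, I obtain
\begin{equation}
\Real\, a(u,u) \;=\; \int_\mathcal{D} \eta |\nabla u|^2 \,d\bm{x} + \int_\mathcal{D} \Bigl(-\tfrac{1}{2}\nabla\cdot\bm{\beta} + \rho\Bigr)|u|^2\,d\bm{x} \;\ge\; \eta_{\min}\|\nabla u\|_{L^2}^2 + \zeta \|u\|_{L^2}^2,
\end{equation}
from which $|a(u,u)| \ge \Real\, a(u,u) \ge \min\{\eta_{\min},\zeta\}\|u\|_{H^1}^2$, establishing the coercivity bound on $\alpha$.

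\textbf{Conclusion.} Since $a(\cdot,\cdot)$ is continuous, coercive, and sesquilinear on the complex Hilbert space $H^1_\per(\mathcal{D})$, and since $v \mapsto \langle f, v\rangle$ is a continuous antilinear functional for $f \in H^{-1}_\per(\mathcal{D})$, the (complex) Lax--Milgram lemma delivers a unique $u\in H^1_\per(\mathcal{D})$ solving \eqref{eq:weakADR}. Testing with $v=u$ and using coercivity together with the duality bound $|\langle f,u\rangle| \le \|f\|_{H^{-1}}\|u\|_{H^1}$ yields $\alpha\|u\|_{H^1}^2 \le \|f\|_{H^{-1}}\|u\|_{H^1}$, hence \eqref{eq:aprioriest}. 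The only genuine obstacle is justifying the distributional integration by parts in the coercivity step; once that is dispatched by invoking periodicity and the implicit $L^\infty$ regularity of $\nabla\cdot\bm{\beta}$, everything else is standard.
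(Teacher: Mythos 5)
Your proposal is correct and follows the standard route that the paper's own proof (deferred to the supplementary material) also takes: establish continuity and coercivity of $a(\cdot,\cdot)$ --- the latter via the identity $\Real\bigl[(\bm{\beta}\cdot\nabla u)\overline{u}\bigr]=\tfrac12\,\bm{\beta}\cdot\nabla|u|^2$, integration by parts with the boundary contributions cancelling by periodicity of $\bm{\beta}$ and $|u|^2$, and the sign condition on $-\tfrac12\nabla\cdot\bm{\beta}+\rho$ --- and then invoke the complex Lax--Milgram lemma together with the duality bound to get \eqref{eq:aprioriest}. The only loose end is cosmetic: the discrete Cauchy--Schwarz step as you literally describe it yields $\mathcal{A}\le 2\max\{\|\eta\|_{L^\infty(\mathcal{D})},\sup_{\bm{x}}\|\bm{\beta}(\bm{x})\|_2,\|\rho\|_{L^\infty(\mathcal{D})}\}$ rather than the bare maximum (try $\|\nabla u\|_{L^2}=\|u\|_{L^2}=\|\nabla v\|_{L^2}=\|v\|_{L^2}$), but this constant matches the paper up to a factor that is immaterial for everything downstream.
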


\begin{rmrk}[Diffusion equation] 
Notice that Proposition~\ref{prop:ADRwellposed} does not encompass the purely diffusive case, i.e.\ $\bm{\beta} \equiv \bm{0}$ and $\rho \equiv 0$. Indeed, in this case we only have uniqueness of the solution up to constants. This issue can be fixed by  assuming  $u,v \in H^1_\per(\mathcal{D}) / \mathbb{R}$ in the weak formulation \eqref{eq:weakADR}.  \hfill $\blacksquare$
\end{rmrk}

\section{\corsing Wavelet-Fourier}
\label{sec:CORSING}

In order to solve problem \eqref{eq:weakADR}, we describe the \corsing \WF (Wavelet-Fourier) method. 

\subsection{Trial functions: wavelets}
\label{sec:wavelets}

In this section, we present the biorthogonal wavelets on the periodic interval and on the periodic hypercube that will be employed as trial functions of the Petrov-Galerkin discretization.  We  refer to \cite{brugiapaglia2018wavelet_supplementary} and to \cite{Dahmen1997,Pabel2015,Urban2008} for technical details about wavelet construction. A crucial property of the biorthogonal wavelets is that, after suitable normalization, they are a Riesz basis for $H^1_\per(\mathcal{D})$, thanks to the so-called norm equivalence property (see Theorem~\ref{thm:normequiv}).

\paragraph{Biorthogonal wavelets on the periodic interval.} 
Given $\ell_0, L \in \mathbb{N}_0$ with $\ell_0 < L$, we consider biorthogonal B-spline wavelet basis on the real line 
$$
\Psi := \Phi_{\ell_0} \cup \bigcup_{\ell = \ell_0}^{L-1} \Psi_\ell,
$$
where $\Phi_\ell = \{\varphi_{\ell,k}\}_{k \in \mathbb{Z}}$ are scaling functions and $\Psi_\ell = \{\psi_{\ell,k}\}_{k \in \mathbb{Z}}$ are wavelet functions (for more details, see Figure~\ref{Figure1}, Setting~\ref{ass:Bsplwave} and \cite{brugiapaglia2018wavelet_supplementary}). 
\begin{figure}[t]
\centering
\includegraphics[height = 5cm]{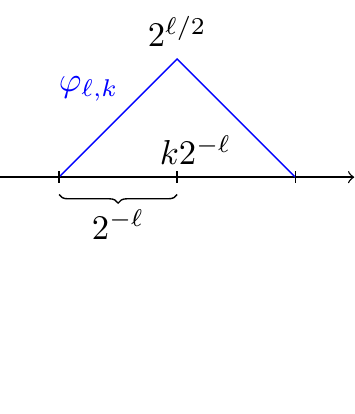}
\hspace{2cm}
\includegraphics[height = 5cm]{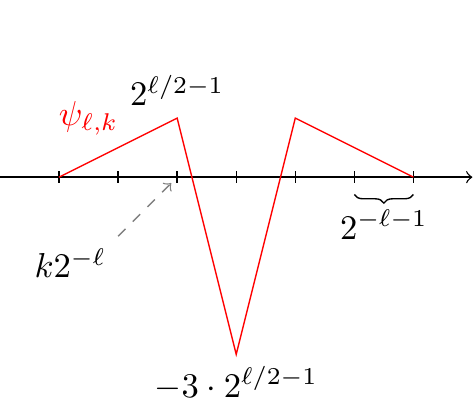}
\caption{\label{Figure1}The translated and rescaled scaling functions $\varphi_{\ell,k}$ (left) and  wavelets $\psi_{\ell,k}$ (right) corresponding to the construction of biorthogonal B-spline wavelets of order $(d, \widetilde d) = (2,2)$ on the real line (Setting~\ref{ass:Bsplwave}).}
\end{figure}
Notice that the dependence of $\Psi$ on the levels $\ell_0$ and $L$ is understood. For $L = \infty$, $\Psi$ is a basis for $L^2(\mathbb{R})$. In order to build a basis for $L^2(\mathcal{D})$, we resort to periodization (see \cite{brugiapaglia2018wavelet_supplementary}). Similarly, we denote by
$$
\Psi^{\per} := \Phi_{\ell_0}^{\per} \cup \bigcup_{\ell = \ell_0}^{L-1} \Psi_\ell^{\per},
$$
where $\Phi_\ell^{\per} = \{\varphi_{\ell,k}^{\per}\}_{k \in \mathbb{Z}/(2^\ell\mathbb{Z})}$ are periodized scaling functions and $\Psi_\ell^{\per} = \{\psi_{\ell,k}^\per\}_{k \in \mathbb{Z}/(2^\ell\mathbb{Z})}$ are periodized wavelet functions. In particular, $\ZZ/(2^\ell\ZZ)$ denotes the ring of integers modulo $2^\ell$ that coincides with the set of canonical representatives, i.e.,
\begin{equation}
\mathbb{Z}/(2^\ell\mathbb{Z}) \equiv \{0,1, \ldots,2^\ell - 1\}, \quad \forall \ell \in \mathbb{N}_{0}.
\end{equation}
Assuming the coarsest level $\ell_0 \in \mathbb{N}_0$ to be fixed, we also introduce the following notation:
\begin{equation}
\label{eq:notationl0-1}
\psi_{\ell_0-1,k} \equiv \varphi_{\ell_0,k}, \quad \forall k \in \ZZ.
\end{equation}

\begin{setting}[1D biorthogonal B-spline wavelets \cite{Cohen1992}]
\label{ass:Bsplwave}
We consider 1D biorthogonal B-spline wavelets of order $(d,\widetilde d) = (2,2)$, corresponding to primal and dual filters
\begin{align}
&\bm{a}_{[-1:1]} = [\tfrac12,1,\tfrac12], \qquad \qquad \, \,
\widetilde{\bm{a}}_{[-2:2]} = [-\tfrac14,\tfrac12,\tfrac32,\tfrac12,-\tfrac14],\\
&{\bm{b}}_{[-1:3]} = [\tfrac14,\tfrac12,-\tfrac32,\tfrac12,\tfrac14],
\quad \;\,
\widetilde{\bm{b}}_{[0:2]} = [\tfrac12,-1,\tfrac12].
\end{align} 
Moreover, we assume scaling functions $\Phi_\ell^{\text{per}}$ and wavelets $\Psi_\ell^{\text{per}}$ to be normalized with respect to the $L^2(\mathcal{D})$-norm (i.e., $\|\varphi_{\ell,k}\|_{L^2(\mathcal{D})} \sim \|\psi_{\ell,k}\|_{L^2(\mathcal{D})} \sim 1$, for every $\ell \in \mathbb{N}_0$ and $k \in \mathbb{Z}/(2^\ell \mathbb{Z})$) and $\ell_0 \geq 2$.
\end{setting}
When wavelets are built as in Setting~\ref{ass:Bsplwave}, the corresponding $\Psi^\per$ is a basis for $H^1_\per(\mathcal{D})$ when $L = \infty$. In fact, for $L \in \mathbb{N}$, $\Span(\Psi^\per)$ coincides with the space of functions in $H^1_\per(\mathcal{D})$ that are continuous and piecewise linear on the uniform grid $2^{-L}\ZZ \cap [0,1] $. Moreover, choosing $\ell_0 \geq 2$ guarantees that\footnote{In particular, the assumption $\ell_0 \geq 2$ ensures that the periodization $\psi_{\ell,k}^\per$ of $\psi_{\ell,k}$ (defined as $\psi_{\ell,k}^{\per}(x) = 2^{\ell/2} \sum_{j \in \ZZ} \psi (2^{\ell}(x + j) - k)$, for every  $x \in \mathbb{R}$ -- see also \cite{brugiapaglia2018wavelet_supplementary}) is the sum of terms with disjoint support. On the contrary, let us assume, for example, $\ell_0 = 1$. Then, since $\supp(\psi_{1,1}) \subseteq (0,3/2)$ the periodization $\psi_{1,1}^\per$ is built by summing two overlapping terms on $[0,1/2]$. In particular, $\psi_{1,1}^\per$ is constant over $[0,1/2]$, whereas $\psi_{1,1}$ is not. This shows that condition \eqref{eq:nooverlap_cond} is  not satisfied for $\ell_0 = 1$.}
\begin{equation}
\label{eq:nooverlap_cond}
\psi_{\ell,k}^\per|_{\supp(\psi_{\ell,k})} \equiv \psi_{\ell,k}|_{\supp(\psi_{\ell,k})}, \quad \forall \ell \geq \ell_0 - 1, \; \forall k \in \mathbb{Z}/(2^\ell\mathbb{Z}).
\end{equation}

For the sake of simplicity, the apex $^\per$ will be omitted in the subsequent developments. In fact, from now on we will always assume to be in the periodic setting. \\

In order to generalize this construction to arbitrary dimension $n > 1$, we consider  \emph{anisotropic} and \emph{isotropic} tensor product wavelets (see Figure~\ref{fig:iso_vs_ani}). Here we just recall the basic definitions and we refer the reader to \cite{Dahmen1997, Pabel2015,Urban2008} for a more detailed discussion. 
\begin{figure}[t]
\centering
\raisebox{1.5cm}{%
\includegraphics[width = 6cm]{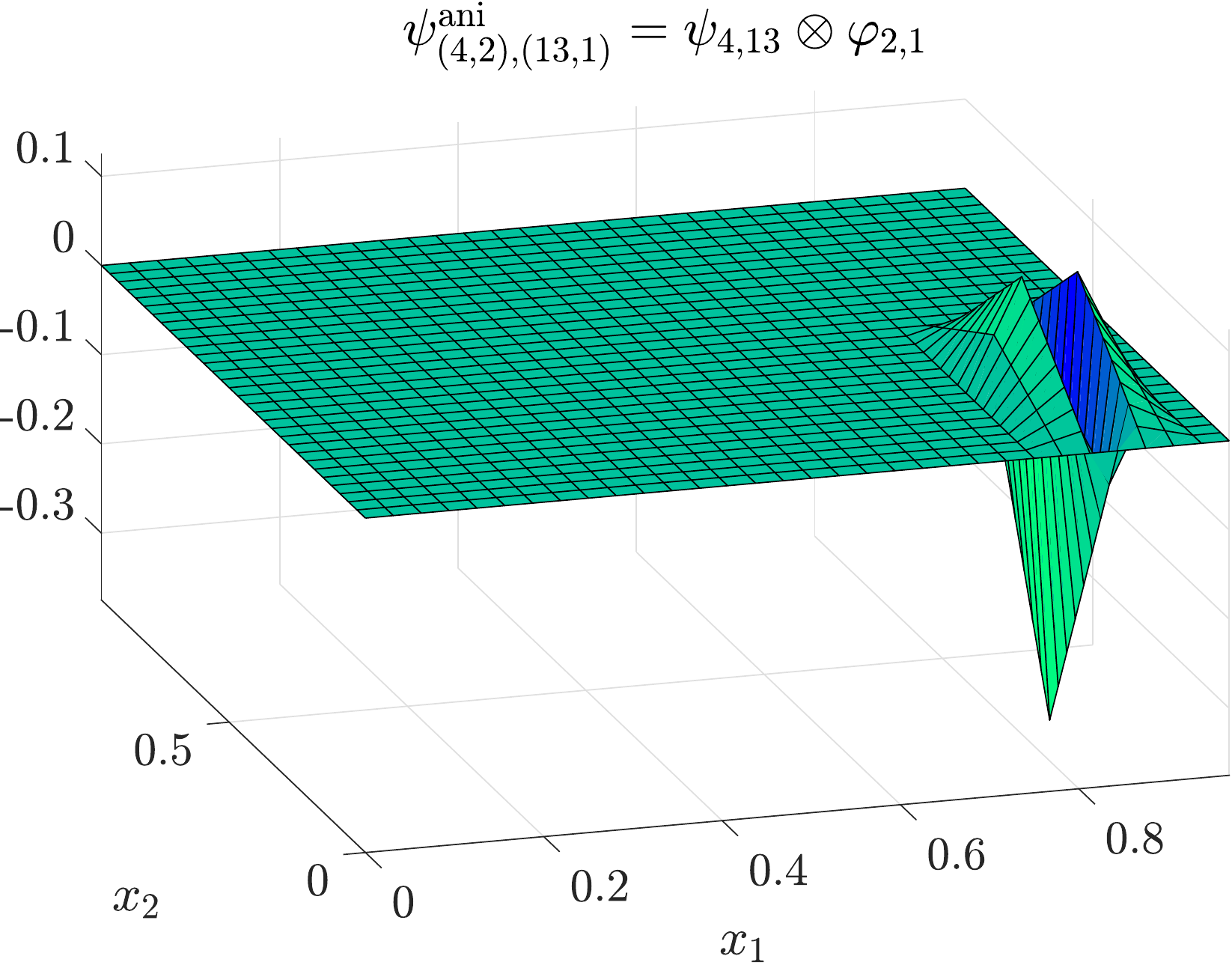}
}
\hspace{1cm}
\includegraphics[width = 6cm]{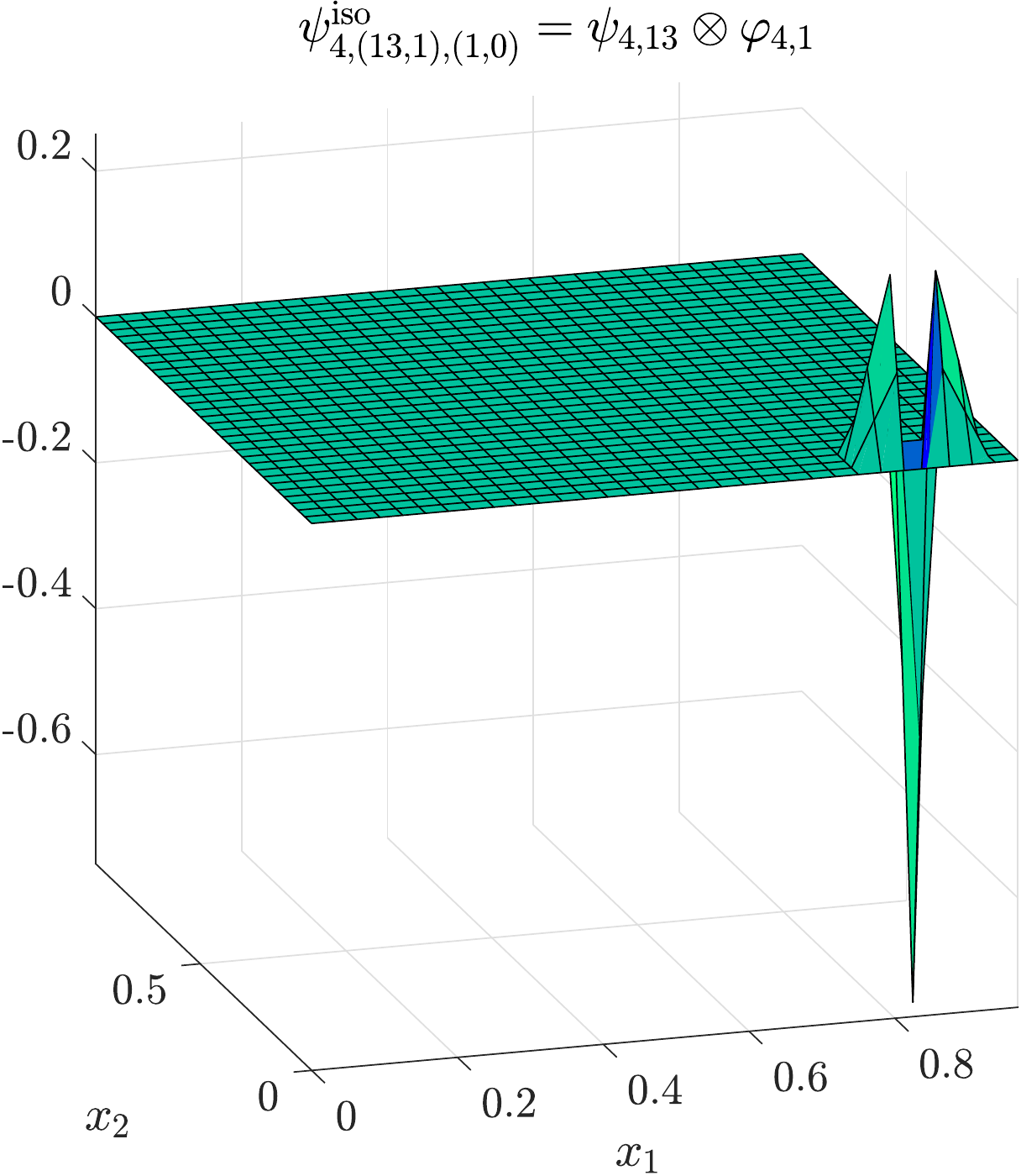}
\caption{\label{fig:iso_vs_ani}Surface plot of the 2D anisotropic (left) and isotropic (right) tensor product wavelets.}
\end{figure}

\paragraph{Anisotropic tensor product wavelets.}

Given a $\ell_0 \in \mathbb{N}$ and  multi-indices $\bm{\ell}\in \mathbb{N}^\Dim$, with $\bm{\ell} \geq \ell_0-1$ and $\bm{k} \in \mathbb{Z}^\Dim$, a first straightforward way to obtain a multi-dimensional basis is by tensorizing the 1D wavelet basis $\Psi$ with itself $\Dim$ times, namely,
\begin{equation}
\psi_{\bm{\ell},\bm{k}}^{\ani} 
:= \psi_{\ell_1,k_1} \otimes \cdots \otimes \psi_{\ell_\Dim,k_\Dim},
\end{equation}
with $\psi_{\ell,k} = \psi_{\ell,k}^\per$. For every level multi-index $\bm{\ell} \in \mathbb{N}^\Dim$, the spatial multi-index $\bm{k}$ takes values in $\mathbb{Z}/(2^\bm{\ell}\mathbb{Z})$, where
$$
\mathbb{Z}/(2^\bm{\ell}\mathbb{Z}):= \mathbb{Z}/(2^{\ell_1}\mathbb{Z}) \times \cdots \times \mathbb{Z}/(2^{\ell_\Dim}\mathbb{Z}) 
\equiv
\prod_{j = 1}^\Dim \{0,1,\ldots,2^{\ell_j-1}\}.
$$ 
Therefore, fixing $L \in \mathbb{N}$ with $L > \ell_0$ and defining the multi-index set 
\begin{equation}
\label{eq:defJPsiani}
\mathcal{J}^\ani := \{(\bm{\ell},\bm{k}) : \bm{\ell} \in \mathbb{N}^\Dim, \, \ell_0-1 \leq \bm{\ell} < L, \, \bm{k} \in \mathbb{Z}/(2^\bm{\ell}\mathbb{Z})\},
\end{equation}
we have
\begin{equation}
\label{eq:defaniwave_short}
\Psi^\ani = \{\psi_\bm{j}^\ani\}_{\bm{j} \in \mathcal{J}^\ani},
\end{equation}
where we set $\bm{j} = (\bm{\ell},\bm{k})$.

\paragraph{Isotropic tensor product wavelets.}

An alternative (and less straightforward) way to define a multi-dimensional wavelet basis is by isotropic tensorization. In this case, we need the following auxiliary notation:
\begin{equation}
\vartheta_{\ell,k,e}:= 
\begin{cases}
\varphi_{\ell,k}, & \text{if } e = 0,\\
\psi_{\ell,k}, & \text{if } e = 1.
\end{cases}
\end{equation}
Then, given $\ell \in\mathbb{N}$ with $\ell \geq \ell_0$, $\bm{k} \in \mathbb{Z}^\Dim$, and $\bm{e} \in \{0,1\}^\Dim$, we define
\begin{equation}
\psi_{\ell,\bm{k},\bm{e}}^\iso
:= \vartheta_{\ell,k_1,e_1} \otimes \cdots \otimes\vartheta_{\ell,k_\Dim,e_\Dim}.
\end{equation}
Then, fixing $L \in \mathbb{N}$ with $L > \ell_0$ and defining the multi-index set
\begin{equation}
\label{eq:defJPsiiso}
\mathcal{J}^\iso := \{(\ell,\bm{k},\bm{e}) : \ell \in \mathbb{N}, \ell_0 \leq \ell < L, \bm{k} \in (\mathbb{Z}/(2^\ell\mathbb{Z}))^\Dim, \bm{e} \in \{0,1\}^\Dim\},
\end{equation}
we have 
\begin{equation}
\label{eq:defisowave_short}
\Psi^\iso := \{\psi_\bm{j}^\iso\}_{\bm{j} \in \mathcal{J}^\iso},
\end{equation}
where we set $\bm{j} = (\ell,\bm{k},\bm{e})$.

The main difference between anisotropic and isotropic tensor product structure is visualized in Figures~\ref{fig:iso_vs_ani} and \ref{Figure2} (in the 2D case).  
\begin{figure}[t]
\centering
\begin{tabular}{cc}
Anisotropic tensorization & Isotropic tensorization\\
\includegraphics[height = 6cm]{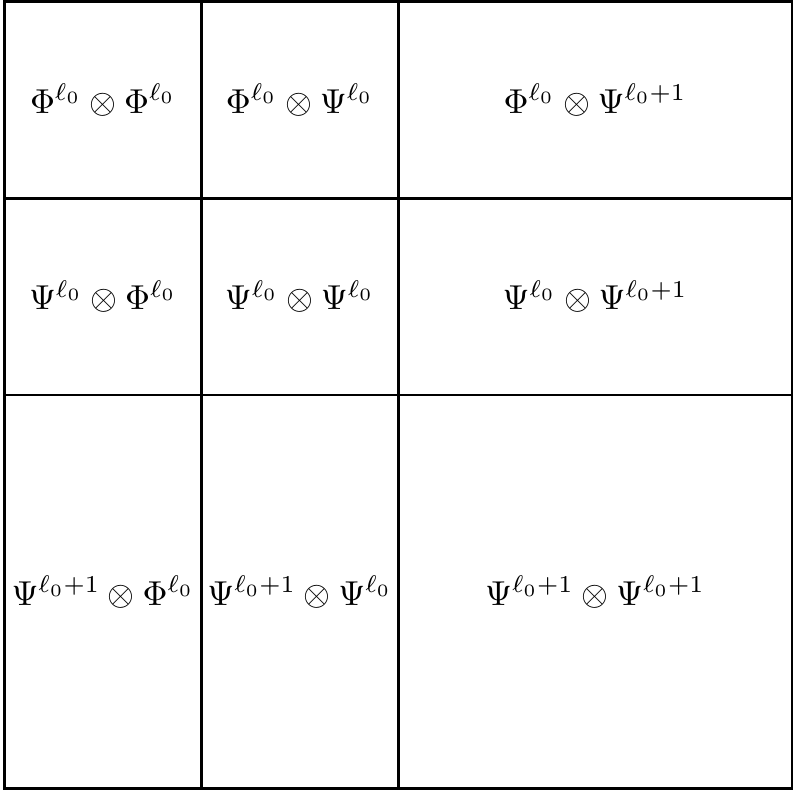} & 
\includegraphics[height = 6cm]{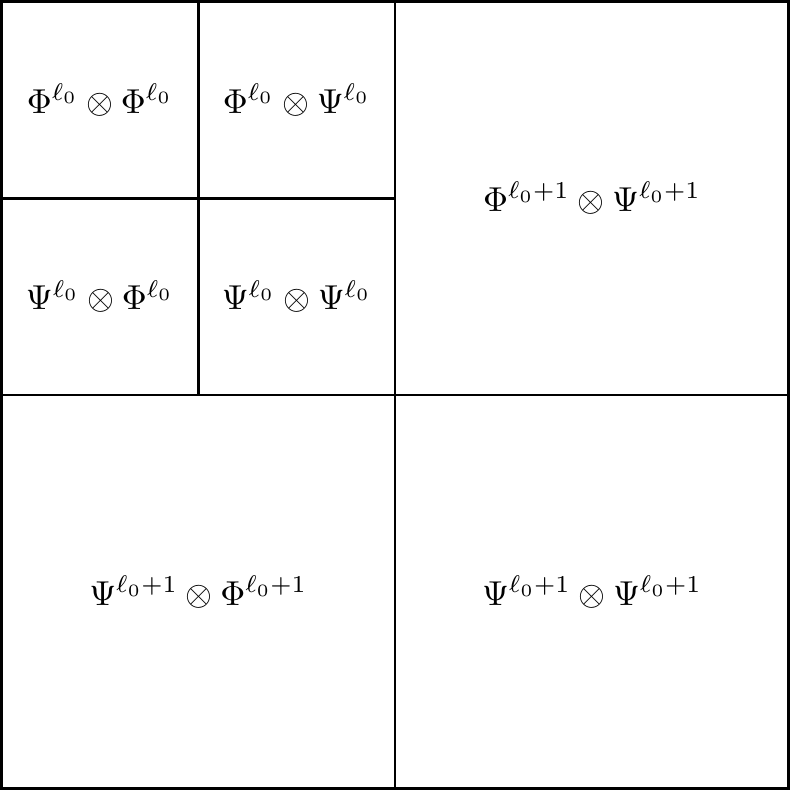}
\end{tabular}
\caption{\label{Figure2}Visualization of the 2D tensorized anisotropic (left) and isotropic (right) wavelets for $L = \ell_0 +1$.}
\end{figure}
In particular, the anisotropic tensorization blends all the dyadic levels together, whereas the isotropic tensorization only combines basis functions of the same level.  

\paragraph{Riesz basis property.}

The biorthogonal wavelets employed here satisfy the norm equivalence property, presented in the following theorem. We refer to \cite{brugiapaglia2018wavelet_supplementary} for the proof. 
\begin{thm}[Norm equivalence]
\label{thm:normequiv}
Let $\Psi \in \{\Psi^\ani,\Psi^\iso\}$ be the tensor product wavelet basis defined as in \eqref{eq:defaniwave_short} or \eqref{eq:defisowave_short} from periodized 1D biorthogonal B-spline wavelets of order $(d,\widetilde d)$. Then, the following norm equivalence holds:
\begin{equation}
\label{eq:normequiv}
\bigg\|\sum_{\bm{j} \in \mathcal{J}^\star}
c_{\bm{j}} \psi_{\bm{j}}^\star
\bigg\|_{H^s(\mathcal{D})}
\sim
\|D_{\star}^s \bm{c}\|_2,
\quad 
\forall \star \in\{\ani,\iso\},
\end{equation}
for every $s \in [0, d-1]$, where $\bm{c} = (\bm{c})_{\bm{\ell},\bm{k}}$ and $D_\star \in \{D_\ani,D_\iso\}$ is the diagonal matrix defined by 
$$
(D_\star)_{\bm{j},\bm{j}'} = 
\begin{cases}
2^{\|\bm{\ell}\|_{\infty}} \delta_{\bm{j},\bm{j}'},  & \text{if } \star = \ani,\\
2^{\ell} \delta_{\bm{j},\bm{j}'},  & \text{if } \star = \iso,
\end{cases}
\quad \forall \bm{j},\bm{j}' \in \mathcal{J}^\star,
$$
where $\delta_{\bm{j},\bm{j}'}$ is Kronecker's delta function and where $\mathcal{J}^\ani$ and $\mathcal{J}^\iso$ are defined as in \eqref{eq:defJPsiani} and \eqref{eq:defJPsiiso}, respectively. 
\end{thm}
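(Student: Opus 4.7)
The plan is to reduce the claim to the known one-dimensional norm equivalence for periodized biorthogonal B-spline wavelets, lift it to the tensor product setting via a derivative-based Fubini argument, and then interpolate for non-integer values of $s$.

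First I would invoke the classical 1D result: for biorthogonal B-spline wavelets of order $(d,\widetilde d)$ constructed as in Setting~\ref{ass:Bsplwave}, one has
\begin{equation}
\bigg\|\sum_{\ell,k} c_{\ell,k}\, \psi_{\ell,k}\bigg\|_{H^s_\per(0,1)}^2 \;\sim\; \sum_{\ell,k} 2^{2s\ell} |c_{\ell,k}|^2, \qquad s \in [0,d-1].
\end{equation}
This is a standard consequence of Jackson and Bernstein estimates for the underlying multi-resolution analysis combined with real interpolation, and I would quote it from the wavelet literature. The $L^2$-normalization of the basis fixes the constants, while the hypothesis $\ell_0 \geq 2$ together with the no-overlap property \eqref{eq:nooverlap_cond} guarantee that periodization preserves the Riesz basis constants inherited from the real-line construction.

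Next, for integer $s \in [0,d-1]$, I would use the characterization $\|u\|_{H^s(\mathcal{D})}^2 \sim \sum_{\alpha_1+\cdots+\alpha_\Dim \leq s} \|D^{\bm{\alpha}} u\|_{L^2(\mathcal{D})}^2$. Since each $\psi^\ani_{\bm{\ell},\bm{k}}$ (respectively each $\psi^\iso_{\ell,\bm{k},\bm{e}}$) is a tensor product, Fubini and the 1D equivalence applied variable-by-variable yield, in the anisotropic case,
\begin{equation}
\|D^{\bm{\alpha}} u\|_{L^2(\mathcal{D})}^2 \;\sim\; \sum_{\bm{\ell},\bm{k}} 2^{2\bm{\alpha}\cdot\bm{\ell}} |c_{\bm{\ell},\bm{k}}|^2.
\end{equation}
Summing over $\alpha_1+\cdots+\alpha_\Dim \leq s$ and invoking the elementary two-sided bound
\begin{equation}
2^{2s\|\bm{\ell}\|_\infty} \;\leq\; \sum_{\alpha_1+\cdots+\alpha_\Dim \leq s} 2^{2\bm{\alpha}\cdot\bm{\ell}} \;\leq\; C(s,\Dim)\, 2^{2s\|\bm{\ell}\|_\infty}
\end{equation}
(the lower bound uses the single term $\bm{\alpha} = s\,\bm{e}_{i^*}$ with $\ell_{i^*} = \|\bm{\ell}\|_\infty$; the upper bound follows from $\bm{\alpha}\cdot\bm{\ell} \leq s\|\bm{\ell}\|_\infty$ together with the finiteness of the index set) delivers the anisotropic equivalence. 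The isotropic case is analogous but simpler: since all factors share a common level $\ell$, Fubini immediately produces $\|D^{\bm{\alpha}} u\|_{L^2}^2 \sim \sum_{\ell,\bm{k},\bm{e}} 2^{2(\alpha_1+\cdots+\alpha_\Dim)\ell}|c_{\ell,\bm{k},\bm{e}}|^2$, and summing over $\alpha_1+\cdots+\alpha_\Dim \leq s$ yields the factor $2^{2s\ell}$ directly, using that $\|\partial \vartheta_{\ell,k,e}\|_{L^2} \sim 2^\ell$ for both $e=0$ and $e=1$. For non-integer $s$ I would then conclude by real interpolation between consecutive integer cases, noting that the weighted sequence spaces induced by $\|D_\star^s \bm{c}\|_2$ form a natural interpolation scale and that the wavelet synthesis map is bounded on each rung of this scale.

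The hardest part, I expect, is handling the isotropic construction rigorously: one must verify that $\Psi^\iso$, as defined with $\bm{e} \in \{0,1\}^\Dim$ allowed at every level $\ell_0 \leq \ell < L$, is a genuine (non-redundant) Riesz basis. This requires identifying $\mathcal{J}^\iso$ with the standard isotropic MRA splitting $V^\iso_L = V^\iso_{\ell_0} \oplus \bigoplus_{\ell_0 \leq \ell < L} W^\iso_\ell$, so that pure-scaling tensor products at different levels are not double-counted. Once this structural point is settled, the Fubini-plus-interpolation machinery above closes the argument for both the anisotropic and isotropic cases.
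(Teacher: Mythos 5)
The paper does not actually prove Theorem~\ref{thm:normequiv} in the text---it defers to the supplementary material---so your argument can only be judged on its own terms. Your overall strategy (1D norm equivalence for the periodized B-spline wavelets, tensorization, interpolation for non-integer $s$) is the standard route and surely close to the intended one. However, there is a genuine gap in the tensorization step. The displayed two-sided equivalence
\begin{equation}
\|D^{\bm{\alpha}} u\|_{L^2(\mathcal{D})}^2 \;\sim\; \sum_{\bm{\ell},\bm{k}} 2^{2\bm{\alpha}\cdot\bm{\ell}} |c_{\bm{\ell},\bm{k}}|^2
\end{equation}
is false for $\bm{\alpha} \neq \bm{0}$: only the upper bound ``$\lesssim$'' holds. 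The lower bound fails because differentiation annihilates constants, and constants lie in the span of the coarse scaling functions (B-splines form a partition of unity). Concretely, already for $n=1$ and $\alpha = 1$, take $\bm{c}$ to be the coefficients of the constant function $1 = \sum_k c_k \varphi_{\ell_0,k}$: then $D^{\alpha}u = 0$ while $\sum_k 2^{2\ell_0}|c_k|^2 > 0$. The same objection hits the isotropic case through the components with $e_i = 0$. Since in your argument the lower-bound half of \eqref{eq:normequiv} is obtained by summing these (false) individual lower bounds over $\bm{\alpha}$, that half of the theorem is not established; only the upper-bound direction survives.

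The repair is to avoid seminorm estimates for individual mixed derivatives and instead use the intersection characterization of the periodic Sobolev norm, which follows from the Fourier characterization \eqref{eq:equivHmFourier} together with $1+|\bm{r}|^{2k} \sim \sum_{i=1}^{n}(1+r_i^{2k})$, namely $\|u\|_{H^k(\mathcal{D})}^2 \sim \sum_{i=1}^{n} \|u\|^2_{L^2\otimes\cdots\otimes H^k_{(i)}\otimes\cdots\otimes L^2}$. For each fixed $i$ one applies the \emph{full} 1D Riesz-basis property of $\{2^{-k\ell}\psi_{\ell,k}\}$ in $H^k(0,1)$ in the $i$-th variable and the $L^2$ Riesz-basis property in the remaining variables (this is where Fubini legitimately enters), which gives $\|u\|^2_{(i)} \sim \sum_{\bm{\ell},\bm{k}} 2^{2k\ell_i}|c_{\bm{\ell},\bm{k}}|^2$ with a genuine two-sided bound; summing over $i$ and using $\max_i 2^{2k\ell_i} \leq \sum_i 2^{2k\ell_i} \leq n \max_i 2^{2k\ell_i}$ produces the weight $2^{2k\|\bm{\ell}\|_\infty}$ in the anisotropic case and $2^{2k\ell}$ in the isotropic one. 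Your interpolation step for non-integer $s$ (via a retract argument using the dual wavelets) and your closing observation that the non-redundancy of $\Psi^{\iso}$ as indexed by \eqref{eq:defJPsiiso} must be checked are both legitimate and worth keeping.
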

In Setting~\ref{ass:Bsplwave}, Theorem~\ref{thm:normequiv} implies that a suitable weighted $\ell^2$-norm of the wavelet coefficients is equivalent to the $H^1(\mathcal{D})$-norm. In particular, using the fact that
\begin{align}
\label{eq:normpsij_ani_1}
\|\psi_{\bm{j}}^\ani\|_{H^1(\mathcal{D})} & \sim 2^{\|\bm{\ell}\|_\infty}, \quad \forall \bm{j} \in \mathcal{J}^\ani,\\
\label{eq:normpsij_iso_1}
\|\psi_{\bm{j}}^\iso\|_{H^1(\mathcal{D})} & \sim 2^{\ell}, \quad \forall \bm{j} \in \mathcal{J}^\iso.
\end{align}
we can rescale the basis functions of $\Psi^{\ani}$ and $\Psi^{\iso}$ and normalize them with respect to the $H^1(\mathcal{D})$-norm and obtain two Riesz bases with respect to the $H^1(\mathcal{D})$-norm. More details on the norm equivalence property are provided in \cite{brugiapaglia2018wavelet_supplementary}. 

\subsection{Test functions: Fourier basis}
\label{sec:Fourier}
Consider the one-periodic Fourier basis functions in dimension one and their tensorized version in the $\Dim$-dimensional case
\begin{equation}
\xi_q(x) := \exp( 2 \pi \iunit q x), \quad \forall q \in \ZZ, \quad 
\xi_{\bm{q}} := \xi_{q_1} \otimes \cdots \otimes \xi_{q_\Dim}, \quad \forall \bm{q} \in \ZZ^\Dim.
\end{equation}
It is easy to verify that $\{\xi_\bm{q} : \bm{q} \in \ZZ^\Dim\}$  is an orthonormal basis for  $L^2(\mathcal{D})$  and that its elements are orthogonal with respect to the $H^1(\mathcal{D})$-inner product. Moreover, we have
\begin{equation}
\label{eq:normxiqmulti-d}
\|\xi_{\bm{q}}\|_{L^2(\mathcal{D})}^2 = 1, 
\quad 
\|\xi_\bm{q}\|_{H^1(\mathcal{D})}^2 = 1 + (2 \pi \|\bm{q}\|_2)^2, \quad \forall \bm{q} \in \ZZ^\Dim.
\end{equation}
Given $R \in \mathbb{N}$, let us consider the following finite multi-index set:
\begin{equation}
\label{eq:defQcal}
\Qcal:=\{\bm{q} \in \ZZ^\Dim : -\lfloor R/2\rfloor + 1 \leq q_i \leq \lfloor R/2\rfloor, \; \forall i \in [n] \}.
\end{equation}
Then, we define the Fourier basis as 
\begin{equation}
\label{eq:defXi}
\Xi := \{\xi_{\bm{q}}\}_{\bm{q} \in \mathcal{Q}}.
\end{equation}
In particular, $\Xi$ is a Riesz basis with respect to the $H^1(\mathcal{D})$-norm.

\subsection{The \corsing Wavelet-Fourier method}
\label{sec:CORSINGWF}

We are now in a position to introduce the \corsing \WF (Wavelet-Fourier) method.

\paragraph{Normalization with respect to the $H^1(\mathcal{D})$-norm.} Let $\Psi = \{\psi_{\bm{j}}\}_{\bm{j} \in \mathcal{J}}$ be a tensor product of periodized biorthogonal B-spline wavelets (or simply a family of  periodized biorthogonal B-spline wavelets for $\Dim = 1$)  defined as in Section~\ref{sec:wavelets}. In particular, $\Psi \in \{\Psi^\ani, \Psi^\iso\}$ and $\mathcal{J} \in \{\mathcal{J}^\ani,\mathcal{J}^\iso\}$. Let $\Xi = \{\xi_{\bm{q}}\}_{\bm{q} \in \mathcal{Q}}$ be the Fourier basis in Section~\ref{sec:Fourier}. Then, we normalize both trial and test functions with respect to the  $H^1(\mathcal{D})$-norm, namely,
$$
\widehat\Psi = \{\widehat\psi_{\bm{j}}\}_{\bm{j} \in \mathcal{J}}, \quad
\widehat\Xi = \{\widehat\xi_{\bm{q}}\}_{\bm{q} \in \mathcal{Q}},
$$ 
such that 
$$
\|\widehat \psi_\bm{j} \|_{H^1(\mathcal{D})} \sim 1, \quad \|\widehat \xi_\bm{q} \|_{H^1(\mathcal{D})} = 1, \quad \forall \bm{j} \in \mathcal{J}, \; \forall \bm{q} \in \mathcal{Q}.
$$
In view of \eqref{eq:normpsij_ani_1}, \eqref{eq:normpsij_iso_1}, and \eqref{eq:normxiqmulti-d},  this normalization can be realized by defining
$$
\widehat\psi_{\bm{j}}^\ani:= 2^{-\|\bm{\ell}\|_\infty} \psi_\bm{j}^\ani,
\quad
\widehat\psi_{\bm{j}}^\iso:= 2^{-\ell} \psi_\bm{j}^\iso,
\quad 
\widehat\xi_\bm{q} := (1+(2\pi\|\bm{q}\|_2)^2)^{-\frac12}\xi_\bm{q}.
$$ 

\paragraph{Petrov-Galerkin discretization.} We consider a  Petrov-Galerkin discretization of \eqref{eq:weakADR} associated with the trial basis $\widehat\Psi$ and test basis $\widehat\Xi$ (see \cite{QuarteroniValli} for an introduction to the Petrov-Galerkin method). The stiffness matrix $B \in \mathbb{C}^{M \times N}$, with $M, N \in \mathbb{N}$ and $M \geq N$, and the load vector $\bm{g} \in \mathbb{C}^M$ are  defined as
\begin{equation}
B_{\bm{q},\bm{j}} := a(\widehat\psi_{\bm{j}}, \widehat\xi_{\bm{q}}), \quad
g_\bm{q} := (f,\widehat\xi_{\bm{q}}),
\quad \forall \bm{j} \in \mathcal{J}, \; \forall\bm{q} \in \Qcal,
\end{equation}
where 
\begin{equation}
N:= |\mathcal{J}|= 2^{\Dim L},\quad   M:= |\Qcal|=R^\Dim,
\end{equation} 
and the resulting Petrov-Galerkin discretization is given by the linear system 
\begin{equation}
\label{eq:PGsystem}
B \bm{v} = \bm{g},
\end{equation}
with $\bm{v} \in \mathbb{C}^N$ the vector of the unknowns.

\paragraph{The \corsing \WF method.} The next step is to  reduce the dimensionality of the Petrov-Galerkin discretization \eqref{eq:PGsystem} by random subsampling. Given a probability distribution $\bm{p} \in \mathbb{R}^M$ over $\mathcal{Q}$, we draw $m \ll M$ multi-indices  $\bm{\tau}_1,\ldots,\bm{\tau}_m \in \mathcal{Q}$ i.i.d.\  randomly according to 
$$
\Prob\{\bm{\tau}_i = \bm{q}\} = p_\bm{q}, \quad \forall \bm{q} \in \mathcal{Q}, \; \forall i \in [m].
$$
Then, we define the \corsing stiffness matrix $A \in \mathbb{C}^{m \times N}$ and load vector $\bm{f} \in \mathbb{C}^m$ as
\begin{equation}
A_{i,\bm{j}} := a(\widehat\psi_{\bm{j}}, \widehat\xi_{\bm{\tau}_i}), 
\quad 
f_i  := (f,\widehat\xi_{\bm{\tau}_i}),
\quad \bm{j} \in \mathcal{J}, \; i \in [m].
\end{equation} 
The \corsing reduced discretization corresponds to the underdetermined linear system
\begin{equation}
\label{eq:CORSINGsystem}
A \bm{z} =  \bm{f},
\end{equation}
with $\bm{z} \in \mathbb{C}^N$ the vector of the unknowns. Then, we consider the diagonal preconditioner $D \in \mathbb{C}^{m \times m}$, defined as\footnote{The diagonal  preconditioner $D$ is chosen in such a way that  $\Expe [(DA)^* D A] = B^* B$.  For further details, see \cite{Brugiapaglia2018}.} 
\begin{equation}
D_{i,k} = \delta_{i,k} / \sqrt{m p_{\bm{\tau}_i}}, \quad i,k \in [m],
\end{equation}
where $\delta_{i,k}$ is Kronecker's delta function. Given a target sparsity level $s \in \mathbb{N}$, with $s \ll N$, we consider the following optimization problem:
\begin{equation}
\label{eq:P0s}
\min_{\bm{z} \in \mathbb{C}^N} \|D (A \bm{z} -  \bm{f})\|_2, \quad \text{s.t. } \|\bm{z}\|_0 \leq s.
\end{equation} 
Although NP-hard, \eqref{eq:P0s} can be approximated by the orthogonal matching pursuit (OMP) algorithm. Finally, the \corsing solution is given by
\begin{equation}
\label{eq:uhat}
\widehat u := \sum_{\bm{j} \in \mathcal{J}} \widehat u_\bm{j} \widehat{\psi}_{\bm{j}},
\end{equation}
where $\widehat{\bm{u}} = (\widehat u_{\bm{j}})_{\bm{j} \in \mathcal{J}} \in \mathbb{C}^N$ is an approximate solution to \eqref{eq:P0s} computed via OMP.

Note that the sampling complexity $m$ can be reduced in order to avoid repeated indices among the $\tau_i$'s. In that case, the preconditioner $D$ has to be slightly modified (see \cite[Remark 3.9]{Brugiapaglia2018}).

\subsection{Towards a recovery error analysis}
\label{sec:i,ii,iii}
We are now able to restate issues (i), (ii), and (iii) in Section~\ref{sec:main_contr} in a rigorous way. Assuming to fix $s,N \in \mathbb{N}$ as parameters chosen by the user, with $s \ll N$, the \corsing procedure outlined above depends on the following three choices:
\begin{enumerate}
\item [(i)] Choose $R = R(s,N)$, defining the size of $\mathcal{Q}$ and, consequently, the test space dimension  $M = R^\Dim$ of the Petrov-Galerkin discretization \eqref{eq:PGsystem}.
\item [(ii)] Choose the number $m = m(s,N,M)$ of random samples, depending sublinearly on $N$ and $M$ (in order to have dimensionality reduction from \eqref{eq:PGsystem} to \eqref{eq:CORSINGsystem});
\item [(iii)] Define the probability distribution $\bm{p} \in \mathbb{R}^M$ on the test multi-index space $\mathcal{Q}$.
\end{enumerate}

We observe that the choice of $s$ and $N$ theoretically depends on the best $s$-term approximation error decay rate for the target function class (containing  $u$). This, in turn, could be studied using results from nonlinear approximation theory (see, e.g., \cite{devore1998nonlinear} or \cite[Chapter 9]{mallat1999wavelet}).

In order to solve issues (i), (ii), and (iii), we resort to the theoretical analysis carried out in \cite{Brugiapaglia2018}, based on the following 
\begin{defn}[Local $a$-coherence]
\label{def:mu}
The local $a$-coherence of $\widehat\Psi$ with respect to $\widehat \Xi$ is a sequence $\bm{\mu} \in \ell(\mathbb{Z}^n)$ defined by
\begin{equation}
\label{eq:defmu}
\mu_\bm{q} := \sup_{\bm{j} \in \mathcal{J}}|a(\widehat\psi_{\bm{j}},\widehat\xi_{\bm{q}})|^2, \quad \forall \bm{q} \in \mathbb{Z}^n.
\end{equation}
\end{defn}
Since the \corsing solution $\widehat u$ is $s$-sparse with respect to $\Psi$ (or, equivalently, to $\widehat\Psi$), the corresponding best possible accuracy is the best $s$-term approximation error
\begin{equation}
\sigma_{s}(u)_{H^1(\mathcal{D})} 
:= \inf \bigg\{\|u - w\|_{H^1(\mathcal{D})} : w = \sum_{\bm{j} \in \mathcal{J}} c_\bm{j} \psi_\bm{j}, \; \|\bm{c}\|_0 \leq s\bigg\}.
\end{equation}
We specialize \cite[Theorem 3.15]{Brugiapaglia2018} to the \corsing \WF setting, providing a recovery error estimate in expectation. The fact that $\widehat\Psi$ and $\widehat\Xi$ are Riesz bases with respect to the $H^1(\mathcal{D})$-norm (guaranteed by Theorem~\ref{thm:normequiv}) is required to apply this result, and this justifies the use of biorthogonal wavelets when $n > 1$. Indeed, tensorizing the hierachical basis of hat functions as in \cite{PhDThesis,Brugiapaglia2015, Brugiapaglia2018} does not give the Riesz basis assumption in the multi-dimensional case. It is also possible to state an analogous result in probability instead of expectation (see \cite[Theorems 3.16 and 3.18]{Brugiapaglia2018}). 
\begin{thm}[\corsing \WF recovery in expectation]
\label{thm:CORSINGrecovery}
Let $\Dim, s,L \in \mathbb{N}$, with $s \ll N = 2^{\Dim L}$,  $K > 0$ be such that $\|u\|_{H^1(\mathcal{D})} \leq K$, where $u$ is the unique solution to \eqref{eq:weakADR}, and assume to have an upper bound to the local $a$-coherence $\bm{\mu} \in \ell(\mathbb{Z}^{n})$, i.e., there exists a sequence $\bm{\nu} \in \ell(\mathbb{Z}^{n})$ such that
\begin{equation}
\label{eq:mu<=nu}
\bm{\mu} \lesssim \bm{\nu}.
\end{equation} 
Choose $R \in \mathbb{N}$ (or, equivalently, $\mathcal{Q}$) such that the truncation condition\footnote{Let us clarify a small difference between \cite[Theorem 3.15]{Brugiapaglia2018} and Theorem~\ref{thm:CORSINGrecovery}. In \cite[Theorem 3.15]{Brugiapaglia2018}, $\bm{\nu}$ is an upper bound to $\bm{\mu}|_{\mathcal{Q}}$ and the truncation condition \eqref{eq:trunc_cond} involves $\bm{\mu}$ instead of $\bm{\nu}$. Therefore, the truncation condition of Theorem~\ref{thm:CORSINGrecovery} implies the truncation condition of \cite[Theorem 3.15]{Brugiapaglia2018}. However, this does not make any difference since in practice the truncation condition of \cite[Theorem 3.15]{Brugiapaglia2018} is verified using an upper bound to $\bm{\mu}$, and not $\bm{\mu}$ itself.}
\begin{equation}
\label{eq:trunc_cond}
 \|  \bm{\nu}|_{\mathcal{Q}^c} \|_1\lesssim \frac{1}{s},
\end{equation}
holds and where $\mathcal{Q}^c := \mathbb{Z}^n \setminus \mathcal{Q}$. Then, for every $0 < \varepsilon < 1$, the \corsing solution $\widehat u \in H^1_\per(\mathcal{D})$ exactly solving \eqref{eq:P0s} satisfies
\begin{equation}
\label{eq:corsingrecovery}
\Expe \left\|\min\left\{1, \frac{K}{\|\widehat u\|_{H^1(\mathcal{D})}}\right\} \widehat u - u\right\|_{H^1(\mathcal{D})}
\lesssim \frac{\mathcal{A}}{\alpha} \sigma_{s}(u)_{H^1(\mathcal{D})} + K \varepsilon,
\end{equation}
where $\alpha$ and $\mathcal{A}$ are the inf-sup and the continuity constants respectively associated with $a(\cdot, \cdot)$, provided that
\begin{equation}
\label{eq:m_rate}
m \gtrsim s \|\bm{\nu}|_{\mathcal{Q}}\|_1  (s\ln(eN/s) + \ln(2s/\varepsilon)),
\end{equation}
and that the drawings $\tau_1, \ldots, \tau_m \in \mathcal{Q}$ are i.i.d.\ according to the probability distribution 
\begin{equation}
\label{eq:defp}
\bm{p} = \frac{\bm{\nu}|_{\mathcal{Q}}}{\|\bm{\nu}|_{\mathcal{Q}}\|_1}.
\end{equation}
\end{thm}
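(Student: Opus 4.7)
The plan is to derive Theorem~\ref{thm:CORSINGrecovery} as a specialization of the abstract CORSING recovery result \cite[Theorem 3.15]{Brugiapaglia2018}, so the bulk of the work consists of verifying that the hypotheses of that general theorem hold in the concrete Wavelet-Fourier Petrov-Galerkin framework of Section~\ref{sec:CORSINGWF}. The abstract statement requires: (a) that the rescaled trial and test systems form Riesz bases of $H^1_\per(\mathcal{D})$, (b) that the sesquilinear form $a(\cdot,\cdot)$ be continuous and coercive on $H^1_\per(\mathcal{D})\times H^1_\per(\mathcal{D})$, and (c) a computable envelope $\bm{\nu}$ for the local $a$-coherence, a truncation condition on $\mathcal{Q}$, a sampling complexity, and a drawing distribution. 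Item (c) is exactly what Theorem~\ref{thm:CORSINGrecovery} postulates via \eqref{eq:mu<=nu}, \eqref{eq:trunc_cond}, \eqref{eq:m_rate}, and \eqref{eq:defp}, so my job reduces to establishing (a) and (b) and then invoking the abstract theorem.

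First I would verify the Riesz basis property with respect to the $H^1(\mathcal{D})$-norm. For $\widehat\Xi$ this is immediate: the tensorized Fourier functions are orthogonal in the $H^1(\mathcal{D})$-inner product, and the rescaling $\widehat\xi_{\bm{q}} = (1+(2\pi\|\bm{q}\|_2)^2)^{-1/2}\xi_{\bm{q}}$ coupled with \eqref{eq:normxiqmulti-d} yields an orthonormal system, hence a Riesz basis with constants equal to one. For $\widehat\Psi$, the norm equivalence of Theorem~\ref{thm:normequiv}, together with the diagonal rescaling by $2^{-\|\bm{\ell}\|_\infty}$ (anisotropic case) or $2^{-\ell}$ (isotropic case), produces a uniformly bounded Riesz basis for $H^1_\per(\mathcal{D})$. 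This is precisely the step that would fail for the tensorized hierarchical hat-function basis used in \cite{Brugiapaglia2018}, and it is what motivates the biorthogonal wavelet choice when $\Dim>1$.

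Next I would observe that, under the assumptions on $\eta,\bm{\beta},\rho$ that underlie Theorem~\ref{thm:CORSINGrecovery} (implicit through the existence of a bounded $u$ solving \eqref{eq:weakADR}), Proposition~\ref{prop:ADRwellposed} supplies the coercivity constant $\alpha$ and the continuity constant $\mathcal{A}$ on $H^1_\per(\mathcal{D})$, which play the role of the inf-sup and continuity constants in the abstract theorem. With these two ingredients in hand, one may invoke \cite[Theorem 3.15]{Brugiapaglia2018}, which, given \eqref{eq:mu<=nu}, \eqref{eq:trunc_cond}, \eqref{eq:m_rate}, and sampling according to \eqref{eq:defp}, delivers the expected recovery bound \eqref{eq:corsingrecovery} in the form required.

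The only subtle point, and the place where I would be most careful, is the discrepancy signalled in the footnote: \cite[Theorem 3.15]{Brugiapaglia2018} phrases the truncation condition in terms of $\|\bm{\mu}|_{\mathcal{Q}^c}\|_1$, whereas \eqref{eq:trunc_cond} controls $\|\bm{\nu}|_{\mathcal{Q}^c}\|_1$. Because $\bm{\mu}\lesssim\bm{\nu}$, the hypothesis \eqref{eq:trunc_cond} is strictly stronger than the abstract one, so the implication goes through without loss. A parallel cosmetic check is that the multiplicative Riesz constants of $\widehat\Psi$ and $\widehat\Xi$ get absorbed into the implicit constants of the bounds \eqref{eq:trunc_cond}, \eqref{eq:m_rate}, and \eqref{eq:corsingrecovery}, so no quantitative change to the statement is needed. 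Aside from this book-keeping, the argument is a direct translation of notation.
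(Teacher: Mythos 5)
Your proposal is correct and follows essentially the same route as the paper, which likewise obtains Theorem~\ref{thm:CORSINGrecovery} purely as a specialization of \cite[Theorem 3.15]{Brugiapaglia2018}: the Riesz basis property of $\widehat\Psi$ and $\widehat\Xi$ is supplied by Theorem~\ref{thm:normequiv} together with the explicit $H^1(\mathcal{D})$-normalizations, the coercivity and continuity constants come from Proposition~\ref{prop:ADRwellposed}, and the $\bm{\mu}$-versus-$\bm{\nu}$ discrepancy in the truncation condition is resolved exactly as in the paper's footnote, since \eqref{eq:mu<=nu} makes \eqref{eq:trunc_cond} the stronger hypothesis. No gaps.
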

Some considerations are in order:

\begin{itemize}
\item Relation \eqref{eq:m_rate} corresponds to a quadratic scaling of $m$ with respect to $s$ (up to logarithmic factors and up to the quantity $\|\bm{\nu}|_{\mathcal{Q}}\|_1$). In practice, a linear dependence of $m$ on $s$ (up to logarithmic factors) seems to be sufficient (see the  \cite[Section 5.4]{Brugiapaglia2018}). A different theoretical analysis carried out in \cite[Section 3.2.5]{PhDThesis} based on the concept of restricted isometry property seems to confirm this conjecture up to rescaling $DA$ by a suitable factor depending on $\mathcal{A}$ and on the true Riesz constant of $\Psi$, hidden in the norm equivalence \eqref{eq:normequiv} with $s = 1$. We have preferred to employ this slightly suboptimal result to avoid this technical rescaling issue. 
\item A necessary condition for \eqref{eq:trunc_cond} is $\|\bm{\nu}\|_1 < \infty$. This will always be the case in the applications discussed in this paper.
\item In order to have an actual compression of the Petrov-Galerkin discretization, $\|\bm{\nu}|_{\mathcal{Q}}\|_1$ has to depend sublinearly on $N$ and $M$ in \eqref{eq:m_rate}. 
\item Notice that knowing an upper bound $K$ to $\|u\|_{H^1(\mathcal{D})}$ is not a restrictive hypothesis in view of the \emph{a priori} estimate \eqref{eq:aprioriest}. 
\item The hypothesis that $\widehat{\bm{u}}$ solves \eqref{eq:P0s} exactly does not take into account the approximation error due to the OMP algorithm, which can be included by resorting to the restricted isometry property analysis.
\item Condition \eqref{eq:trunc_cond} is sufficient to guarantee the so-called \emph{$s$-sparse restricted inf-sup property} for the Petrov-Galerkin discretization \eqref{eq:PGsystem}. This is a variant of the classical inf-sup (or Ladyzhenskaya-Babu\v{s}ka-Brezzi) condition, adapted to the sparse case (see \cite[Section 3.3]{Brugiapaglia2018} for more details).
\end{itemize}
Starting from Theorem~\ref{thm:CORSINGrecovery}, we can tackle issues (i), (ii), and (iii) in Section~\ref{sec:main_contr}:
\begin{enumerate}
\item [(i)] Choose $R = R(s,N)$ large enough to satisfy \eqref{eq:trunc_cond}.
\item [(ii)] Choose $m = m(s,N)$ according to \eqref{eq:m_rate}.
\item [(iii)] Choose $\bm{p} \in \mathbb{R}^M$ according to \eqref{eq:defp}.
\end{enumerate}
Our next goal is to find an upper bound $\bm{\nu}$ to the local $a$-coherence $\bm{\mu}$ as in \eqref{eq:mu<=nu} such that:
\begin{itemize}
\item[(a)] It is possible to find an explicit formula for $R = R(s,N)$ such that \eqref{eq:trunc_cond} is satisfied. 
\item [(b)] The quantity $\|\bm{\nu}|_{\mathcal{Q}}\|_1$ depends sublinearly on $M$ and $N$. 
\end{itemize}


\section{Local $a$-coherence estimates}
\label{sec:localcoherence}

This section is the technical core of the article. We extend the results in \cite{Brugiapaglia2018} by deriving upper bounds to the local $a$-coherence defined in \eqref{eq:defmu} for ADR equations with nonconstant coefficients in dimension one (Section~\ref{sec:est_1d_noncost}) and with constant coefficients in arbitrary dimension (Section~\ref{sec:est_dD_ani}).

\subsection{The 1D ADR problem with nonconstant coefficients}
\label{sec:est_1d_noncost}

We start by showing some auxiliary inequalities involving inner products between biorthogonal B-spline wavelets and the Fourier basis functions, and their respective derivatives. Note that in the following statement, the basis functions are normalized with respect to the $L^2(\mathcal{D})$-norm.

\begin{lem}[Auxiliary inequalities, $\Dim = 1$]
\label{lem:1DestADR}
In Setting~\ref{ass:Bsplwave}, the following inequalities hold for every $\ell \geq \ell_0$, $k \in \mathbb{Z}/(2^\ell\mathbb{Z})$,  $q\in\mathbb{Z}\setminus\{0\}$, $(\alpha_1,\alpha_2) \in \{0,1\}^2$, and $\gamma \in [0,2]$:
\begin{align}
  |(D^{\alpha_1}\varphi_{\ell,k},D^{\alpha_2}\xi_q)| 
& \leq 2^{\alpha_1 + \alpha_2}\; 2^{(\frac32-\gamma)\ell} |\pi q|^{\gamma - 2 + \alpha_1 + \alpha_2}, \label{eq:1DestADRtermsscal}\\
 |(D^{\alpha_1}\psi_{\ell,k},D^{\alpha_2}\xi_q)| 
 & \leq 2^{\alpha_1+\alpha_2+1-\gamma} \|\bm{b}\|_2 \|\bm{b}\|_0^{\frac 12}  2^{(\frac32-\gamma)\ell} |\pi q|^{\gamma - 2 + \alpha_1 + \alpha_2  }. \label{eq:1DestADRtermswave}
\end{align}
Moreover, for $q=0$, we have 
\begin{align}
|(\varphi_{\ell,k}',\xi_0')| &=   |(\psi_{\ell,k}',\xi_0')|  = 0, \label{eq:1Destdifq0}\\
|(\varphi_{\ell,k}',\xi_0)| &=   |(\psi_{\ell,k}',\xi_0)|  = 0, \label{eq:1Destadvq0}\\
|(\varphi_{\ell,k},\xi_0)| &= 2^{-\ell/2}, \quad |(\psi_{\ell,k},\xi_0)|  = 0. \label{eq:1Destreaq0}
\end{align}

\end{lem}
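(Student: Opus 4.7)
The plan is to reduce every inner product $(D^{\alpha_1}f,D^{\alpha_2}\xi_q)$ with $f \in \{\varphi_{\ell,k},\psi_{\ell,k}\}$ to an evaluation of the continuous Fourier transform $\widehat{f}(q)$ on the line, and then to estimate it via the scaling of $f$ and known pointwise bounds on $|\widehat{\varphi}(\omega)|$.

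\textbf{Step 1: cases $q=0$ and reduction to $\alpha_1=\alpha_2=0$.} For $q=0$, $\xi_0\equiv 1$, so $\xi_0'=0$ gives \eqref{eq:1Destdifq0} and \eqref{eq:1Destadvq0} immediately; periodicity of $\varphi_{\ell,k}$ also yields $(\varphi_{\ell,k}',\xi_0)=0$. The identity $(\varphi_{\ell,k},\xi_0)=2^{-\ell/2}$ follows from $\int \varphi = 1$ (B-spline normalization) combined with the dilation rule $\varphi_{\ell,k}(x)=2^{\ell/2}\varphi(2^\ell x-k)$, and $(\psi_{\ell,k},\xi_0)=0$ is the vanishing zeroth moment of $\psi$ (a consequence of $\widetilde d=2$). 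For $q\neq 0$, since $\varphi_{\ell,k},\psi_{\ell,k}\in H^1_\per(\mathcal D)$ and $\xi_q$ is smooth periodic, integration by parts on $\mathcal D$ yields, for $f\in\{\varphi_{\ell,k},\psi_{\ell,k}\}$,
\begin{equation*}
(D^{\alpha_1}f,D^{\alpha_2}\xi_q)=(-1)^{\alpha_1}(f,D^{\alpha_1+\alpha_2}\xi_q)=(-1)^{\alpha_1}\overline{(2\pi\iunit q)^{\alpha_1+\alpha_2}}\,(f,\xi_q),
\end{equation*}
so that $|(D^{\alpha_1}f,D^{\alpha_2}\xi_q)|=|2\pi q|^{\alpha_1+\alpha_2}|(f,\xi_q)|$. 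Thus it suffices to bound $|(\varphi_{\ell,k},\xi_q)|$ and $|(\psi_{\ell,k},\xi_q)|$ and multiply by $|2\pi q|^{\alpha_1+\alpha_2}=2^{\alpha_1+\alpha_2}|\pi q|^{\alpha_1+\alpha_2}$.

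\textbf{Step 2: passage to the Fourier transform on $\mathbb R$.} Thanks to the no-overlap condition \eqref{eq:nooverlap_cond} (guaranteed by $\ell_0\geq 2$) and the fact that $e^{-2\pi\iunit q j}=1$ for $q,j\in\ZZ$, a direct folding computation gives
\begin{equation*}
(\varphi_{\ell,k},\xi_q)=\int_{\mathcal D}\varphi^{\per}_{\ell,k}(x)e^{-2\pi\iunit q x}\de x=\int_{\RR}\varphi_{\ell,k}(y)e^{-2\pi\iunit q y}\de y=\widehat{\varphi_{\ell,k}}(q),
\end{equation*}
and the analogous identity for $\psi_{\ell,k}$. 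By the dilation/translation rule for the Fourier transform, $\widehat{\varphi_{\ell,k}}(q)=2^{-\ell/2}e^{-2\pi\iunit q k/2^\ell}\widehat{\varphi}(q/2^\ell)$, hence $|(\varphi_{\ell,k},\xi_q)|=2^{-\ell/2}|\widehat{\varphi}(q/2^\ell)|$, and similarly for $\psi$.

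\textbf{Step 3: spectral decay of $\widehat{\varphi}$ and $\widehat{\psi}$.} The primal B-spline of order $d=2$ satisfies $|\widehat{\varphi}(\omega)|=|\!\sinc(\pi\omega)|^{2}$ (up to a unimodular shift determined by the centering of the filter $\bm a$); combining $|\sin(\pi\omega)|\leq \min(1,\pi|\omega|)$ gives the uniform bound $|\widehat{\varphi}(\omega)|\leq |\pi\omega|^{-\gamma'}$ for every $\gamma'\in[0,2]$. Substituting this into the scaling identity of Step 2 and setting $\gamma'=2-\gamma$ yields
\begin{equation*}
|(\varphi_{\ell,k},\xi_q)|\leq 2^{-\ell/2}|\pi q/2^\ell|^{-(2-\gamma)}=2^{(\frac32-\gamma)\ell}|\pi q|^{\gamma-2},
\end{equation*}
which, multiplied by $2^{\alpha_1+\alpha_2}|\pi q|^{\alpha_1+\alpha_2}$, is exactly \eqref{eq:1DestADRtermsscal}. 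For the wavelet, the two-scale relation $\psi(x)=\sum_n b_n\varphi(2x-n)$ yields $\widehat{\psi}(\omega)=\frac12 B(\omega/2)\widehat{\varphi}(\omega/2)$ with $B(\omega):=\sum_n b_n e^{-2\pi\iunit\omega n}$, and Cauchy--Schwarz on the support of $\bm b$ gives $|B(\omega)|\leq\|\bm b\|_1\leq\|\bm b\|_0^{1/2}\|\bm b\|_2$. Applying the same bound on $|\widehat\varphi(\omega/2)|\leq |\pi\omega/2|^{-(2-\gamma)}$ produces the extra factor $2^{1-\gamma}\|\bm b\|_2\|\bm b\|_0^{1/2}$, which after insertion in the identity of Step 2 and multiplication by $2^{\alpha_1+\alpha_2}|\pi q|^{\alpha_1+\alpha_2}$ yields exactly \eqref{eq:1DestADRtermswave}.

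\textbf{Main obstacle.} The analytic content is routine Fourier bookkeeping once the periodic inner product has been identified with the line Fourier transform; the real care lies in tracking all constants and powers of $2$ through Steps 1--3 (in particular, in the precise form of the two-scale relation for the \emph{biorthogonal} wavelet associated with the filters of Setting~\ref{ass:Bsplwave}, to avoid off-by-$\sqrt 2$ discrepancies) and in justifying the folding identity of Step 2 via the no-overlap condition \eqref{eq:nooverlap_cond}.
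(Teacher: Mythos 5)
Your proof is correct and reproduces exactly the constants in the statement, but the bookkeeping is organized differently enough from the paper's to be worth comparing. The analytic core is the same in both arguments: the interpolation inequality $\sin^2(x)\le|x|^{\gamma}$ for $\gamma\in[0,2]$, the two-scale relation $\psi=\sum_n b_n\varphi(2\cdot-n)$ combined with Cauchy--Schwarz on the filter $\bm{b}$ (giving the factor $\|\bm{b}\|_2\|\bm{b}\|_0^{1/2}$), and the identity $\xi_q'=2\pi\iunit q\,\xi_q$ to move between the four derivative patterns $(\alpha_1,\alpha_2)$. The difference is the direction and the language: the paper takes $(\alpha_1,\alpha_2)=(1,1)$ as the base case and computes $(\varphi_{\ell,k}',\xi_q')$ by an explicit integral over $(0,1)$, using the no-overlap condition \eqref{eq:nooverlap_cond} to restrict to the genuine support and obtaining $4\cdot 2^{3\ell/2}\enum^{-2\iunit\pi qk2^{-\ell}}\sin^2(\pi q2^{-\ell})$; you take $(0,0)$ as the base case, unfold the periodic Fourier coefficient to the line transform $\widehat{\varphi_{\ell,k}}(q)=2^{-\ell/2}\enum^{-2\pi\iunit qk/2^{\ell}}\widehat{\varphi}(q/2^{\ell})$ via the folding identity, and quote the closed form $\widehat{\varphi}(\omega)=\sin^2(\pi\omega)/(\pi\omega)^2$ for the order-$2$ B-spline --- which is of course the very quantity the paper computes by hand, since $(\varphi_{\ell,k}',\xi_q')$ equals $|2\pi q|^2\,2^{-\ell/2}\widehat{\varphi}(q/2^{\ell})$ up to a phase. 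Your route is slightly more economical in that all four derivative cases and the $q=0$ identities are read off a single scaling relation, at the price of importing standard facts about the CDF$(2,2)$ construction ($\widehat{\varphi}=\sin^2(\pi\cdot)/(\pi\cdot)^2$, $\int\varphi=1$, the vanishing zeroth moment of $\psi$) rather than verifying them inline. One small remark: the folding identity in your Step 2 holds for the periodization of any $L^1$ function and does not actually require \eqref{eq:nooverlap_cond}; that condition is needed in the paper only because its computation works with the restriction of the periodized function to the nominal support of the non-periodized one.
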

\begin{proof}
If $q=0$, equations \eqref{eq:1Destdifq0}-\eqref{eq:1Destreaq0} can be verified via direct computation, being $\xi_0 \equiv 1$. Now, we analyze the case $q \neq 0$. Considering the case $(\alpha_1, \alpha_2) = (1,1)$, thanks to hypothesis \eqref{eq:nooverlap_cond}, we can directly compute 
\begin{align}
(\varphi_{\ell,k}',\xi_q') 
& = 2^{3\ell/2} \bigg(\int_{(k-1)2^{-\ell}}^{k 2^{-\ell}} \overline{\xi_q'(x)} \de x 
- \int_{k2^{-\ell}}^{(k+1) 2^{-\ell}} \overline{\xi_q'(x)} \de x\bigg)\\
& = 2^{3\ell/2} (2\enum^{-2\iunit\pi q k 2^{-\ell}} - \enum^{-2\iunit\pi q (k-1) 2^{-\ell}}  - \enum^{-2\iunit\pi q (k+1) 2^{-\ell}})\\
& = 2^{3\ell/2} \enum^{-2\iunit\pi q k 2^{-\ell}} (2-\enum^{2\iunit\pi q 2^{-\ell}} - \enum^{-2\iunit\pi q 2^{-\ell}})\\
& = 2^{3\ell/2} \enum^{-2\iunit\pi q k 2^{-\ell}} 2(1-\cos(2\pi q 2^{-\ell}))
 = 4 \cdot 2^{3\ell/2} \enum^{-2\iunit\pi q k 2^{-\ell}} \sin^2(\pi q 2^{-\ell}).
\end{align}
The inequality $\sin^2(x) \leq |x|^{\gamma}$, which holds for every $x \in \RR$ and $\gamma \in [0,2]$ (see Figure~\ref{fig:sharpness_UB_Dphi_Dxi}), yields 
\begin{figure}
\centering
\includegraphics[width = 12cm]{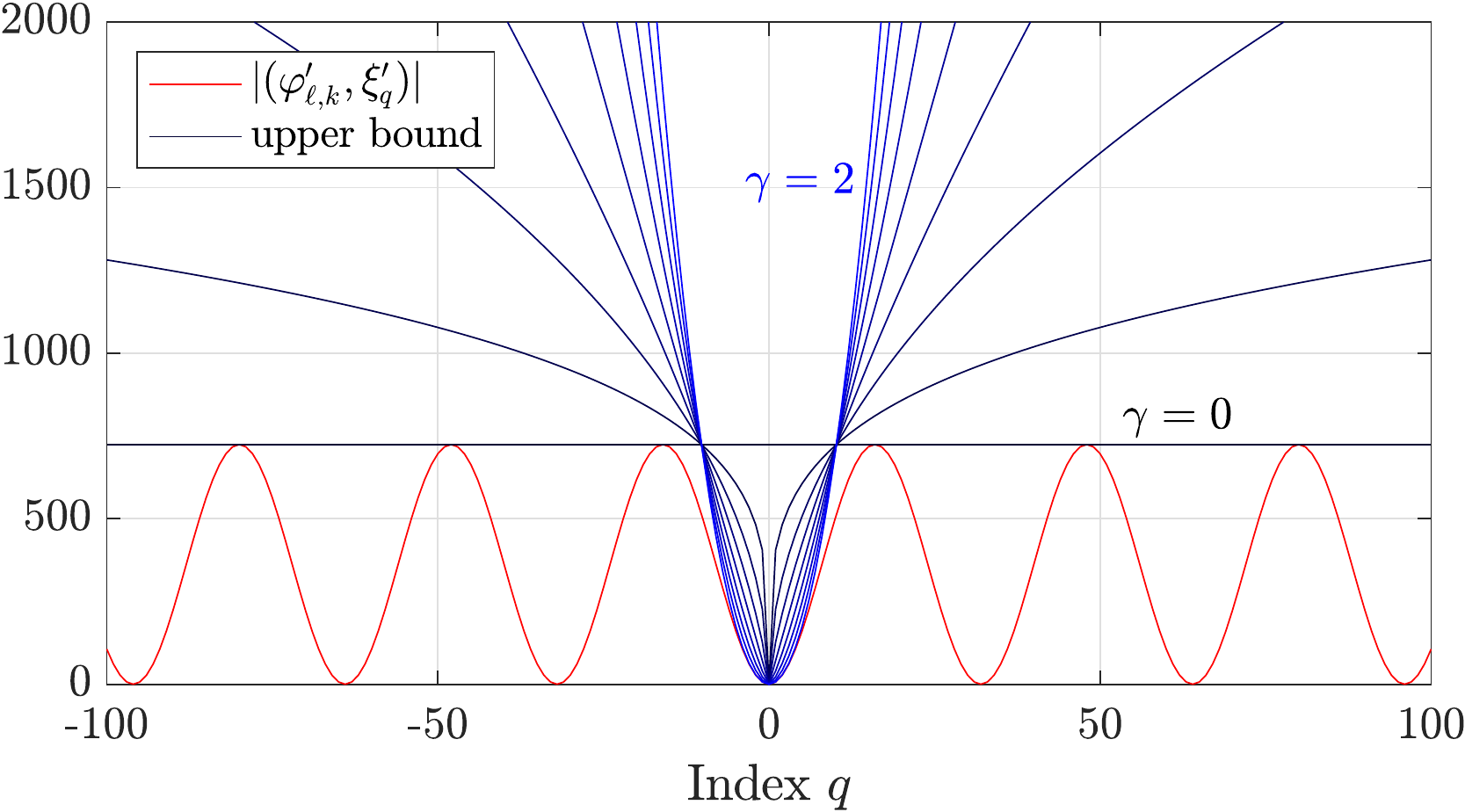}
\caption{\label{fig:sharpness_UB_Dphi_Dxi}Sharpness of the upper bound (\ref{eq:diffterm1Dub}), with $\ell = 5$, $k = 4$, and different values of $\gamma \in [0,2]$.}
\end{figure}
\begin{align}
\label{eq:diffterm1Dub}
|(\varphi_{\ell,k}',\xi_q')| 
&\leq 4 \cdot 2^{(\frac32-\gamma)\ell} |\pi q|^\gamma
\end{align}
Moreover, thanks again to hypothesis \eqref{eq:nooverlap_cond}, employing \eqref{eq:diffterm1Dub}, the discrete Cauchy-Schwarz inequality, and the definition of the mother wavelet (see also \cite{brugiapaglia2018wavelet_supplementary})
$$
\psi(x)=\sum_{k\in\mathbb{Z}} b_k \varphi(2x-k), \quad \forall x\in\mathbb{R},
$$
we obtain
\begin{align}
|(\psi_{\ell,k}',\xi_q')| 
& = \frac{1}{\sqrt{2}} \bigg|\sum_{j \in \mathbb{Z}/(2^{\ell+1}\mathbb{Z})}b_{j-2k} (\varphi_{\ell+1,j}',\xi_q')\bigg| 
 \leq \frac{\|\bm{b}\|_2}{\sqrt{2}}  \Bigg[\sum_{j \in \mathbb{Z}/(2^{\ell+1}\mathbb{Z}), \, b_{j-2k} \neq 0 } |(\varphi_{\ell+1,j}',\xi_q')|^2\Bigg]^{\frac12}\\
& \leq 4\cdot \frac{\|\bm{b}\|_2 \|\bm{b}\|_0^{\frac12}}{\sqrt{2}} 2^{(\frac32-\gamma)(\ell+1)} |\pi q|^\gamma
 = 2^{3-\gamma}\|\bm{b}\|_2 \|\bm{b}\|_0^{\frac 12}  2^{(\frac32-\gamma)\ell} |\pi q|^\gamma.
\end{align}
This concludes the case $(\alpha_1,\alpha_2) =(1,1)$. The case $(\alpha_1,\alpha_2) \neq (1,1)$ can be addressed using integration by parts since $\xi_q' = (2\pi \iunit q) \xi_q$. 

\end{proof}

%

We are now in a position to estimate the local $a$-coherence of the wavelet basis with respect to the Fourier basis for 1D ADR equations with nonconstant coefficients.

\begin{thm}[Local $a$-coherence upper bound, $\Dim = 1$]
\label{thm:mu_AReq1D}
In Setting~\ref{ass:Bsplwave}, for $\eta,\beta\in H^1_\per(\mathcal{D})$ and $\rho \in L^2(\mathcal{D})$, the local $a$-coherence in \eqref{eq:defmu} can be bounded from above as 
\begin{align}
\mu_0 & \lesssim 2^{-2\ell_0}(|\beta|_{H^1(\mathcal{D})}^2 + \|\rho\|_{L^2(\mathcal{D})}^2)\\
\mu_q &  \lesssim \left(\|\dif\|_{H^1(\mathcal{D})}^2 + \frac{\|\beta\|_{H^1(\mathcal{D})}^2}{q^2} + \|\rea\|_{L^2(\mathcal{D})}^2\right)  \min\bigg\{\frac{2^L}{q^2},\frac{1}{|q|}\bigg\}, \quad \forall q \in \mathbb{Z} \setminus \{0\}.
\end{align}
\end{thm}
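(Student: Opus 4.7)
The plan is to split $a(\widehat{\psi}_{\ell,k},\widehat{\xi}_q)$ into diffusion, advection, and reaction contributions,
\begin{equation*}
a(\widehat{\psi}_{\ell,k},\widehat{\xi}_q) \,=\, \frac{2^{-\ell}}{\sqrt{1+(2\pi q)^2}}\bigl[(\eta\psi'_{\ell,k},\xi'_q)+(\beta\psi'_{\ell,k},\xi_q)+(\rho\psi_{\ell,k},\xi_q)\bigr],
\end{equation*}
to bound the three inner products separately, and then take the supremum over $(\ell,k)$; the convention \eqref{eq:notationl0-1} lets us treat the scaling functions $\varphi_{\ell_0,k}$ as part of the family $\{\psi_{\ell,k}\}_{\ell\geq\ell_0-1}$. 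For $q=0$, identities \eqref{eq:1Destdifq0}--\eqref{eq:1Destreaq0} kill the diffusion contribution ($\xi'_0\equiv 0$), a periodic integration by parts rewrites the advection term as $-(\beta',\psi_{\ell,k})$, which Cauchy--Schwarz combined with $\|\psi_{\ell,k}\|_{L^2}\sim 1$ controls by $|\beta|_{H^1(\mathcal{D})}$, and the reaction term is bounded directly by $\|\rho\|_{L^2(\mathcal{D})}$. The prefactor $2^{-\ell}$ with $\ell\geq\ell_0-1$ then yields the claim for $\mu_0$.

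For $q\neq 0$, the central reduction is a Fourier expansion of the variable coefficients: writing $\eta=\sum_r\widehat{\eta}_r\xi_r$ and using $\xi'_q=2\pi\mathrm{i}q\,\xi_q$ together with $\xi_r\overline{\xi_q}=\overline{\xi_{q-r}}$, one finds
\begin{equation*}
(\eta\psi'_{\ell,k},\xi'_q) \,=\, -2\pi\mathrm{i}q\sum_{r\in\mathbb{Z}}\widehat{\eta}_r\,(\psi'_{\ell,k},\xi_{q-r}),
\end{equation*}
and analogously for advection and reaction. A weighted Cauchy--Schwarz in $r$ with weight $1+r^2$ factors out $\|\eta\|_{H^1(\mathcal{D})}^2$ through the Fourier norm equivalence \eqref{eq:equivHmFourier} and leaves the \emph{interference sum}
\begin{equation*}
T_q^\ell \,:=\, \sum_{s\in\mathbb{Z}}\frac{|(\psi'_{\ell,k},\xi_s)|^2}{1+(q-s)^2}.
\end{equation*}
The same $T_q^\ell$ controls both diffusion and advection (the latter without the $(2\pi q)^2$ prefactor, which is the source of the extra $1/q^2$ gain on $\|\beta\|_{H^1(\mathcal{D})}^2$ in the statement), while the reaction term is handled more easily by the Parseval identity $\sum_r|(\psi_{\ell,k},\xi_{q-r})|^2=\|\psi_{\ell,k}\|_{L^2}^2\sim 1$ combined with $\sum_r|\widehat{\rho}_r|^2=\|\rho\|_{L^2(\mathcal{D})}^2$.

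The heart of the argument is then the estimate $\sup_{\ell\leq L}2^{-2\ell}T_q^\ell\lesssim\min\{2^L/q^2,1/|q|\}$. The plan is to split $T_q^\ell$ at the threshold $|s-q|=|q|/2$. On the far part, $(q-s)^2\gtrsim q^2$, so Parseval's bound $\sum_s|(\psi'_{\ell,k},\xi_s)|^2\sim 2^{2\ell}$ yields a contribution $\lesssim 2^{2\ell}/q^2$. On the near part, $|s|\sim|q|$, and I will exploit the freedom $\gamma\in[0,2]$ in Lemma \ref{lem:1DestADR}: for $|q|\leq 2^\ell$ choose $\gamma=2$, giving $|(\psi'_{\ell,k},\xi_s)|^2\lesssim q^2/2^\ell$; for $|q|>2^\ell$ choose $\gamma=0$, giving $|(\psi'_{\ell,k},\xi_s)|^2\lesssim 2^{3\ell}/q^2$. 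Summing over the $O(|q|)$-size window produces a near-part bound $\lesssim\min(q^2/2^\ell,2^{3\ell}/q^2)$. Dividing by $2^{2\ell}$ and taking the supremum over $\ell_0-1\leq\ell<L$, the optimum is attained at an interior level $\ell\sim\log_2|q|$ when $|q|\leq 2^L$ (yielding $1/|q|$) and at the boundary $\ell=L-1$ when $|q|>2^L$ (yielding $2^L/q^2$), which is precisely the announced $\min$.

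The main obstacle will be carrying out these splittings with uniform constants across the several cases and the three terms of the sesquilinear form, while ensuring that the Cauchy--Schwarz extraction of $\|\eta\|_{H^1},\|\beta\|_{H^1},\|\rho\|_{L^2}$ remains compatible with the sharp level-balancing in $\gamma$. Summing the three contributions then yields the claimed bound on $\mu_q$ for $q\neq 0$.
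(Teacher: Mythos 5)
Your proposal is correct and shares the paper's overall architecture for $q\neq 0$ --- Fourier expansion of the coefficients, weighted Cauchy--Schwarz with weight $1+r^2$ to extract $\|\eta\|_{H^1}^2$ and $\|\beta\|_{H^1}^2$, and the $\gamma$-parametrized bounds of Lemma~\ref{lem:1DestADR} --- but it diverges from the paper precisely at the hardest technical step, the estimation of the interference sum $T_q^\ell=\sum_s |(\psi'_{\ell,k},\xi_s)|^2/(1+(q-s)^2)$. The paper substitutes the Lemma~\ref{lem:1DestADR} bound termwise and reduces the problem to showing $S_1(q)\lesssim 1/|q|$ and $S_2(q)\lesssim 1/q^2$, which it does by a three-way splitting of the range of $r$ and an integral-comparison argument complicated by the non-monotonicity of $r\mapsto 1/[(q-r)^y(1+r^2)]$. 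You instead split at $|s-q|=|q|/2$: Parseval gives the far part $\lesssim q^{-2}\|\psi'_{\ell,k}\|_{L^2}^2\sim 2^{2\ell}/q^2$, and on the near window $|s|\sim|q|$ the choices $\gamma=2$ and $\gamma=0$ give a uniform pointwise bound $\min\{q^2 2^{-\ell},2^{3\ell}q^{-2}\}$; one should just say explicitly that the window contributes only an $O(1)$ factor because the weights $\sum_{|j|\le |q|/2}(1+j^2)^{-1}$ sum to a constant (your phrase ``summing over the $O(|q|)$-size window'' could misleadingly suggest a factor $|q|$, which would break the bound). With that clarification, $2^{-2\ell}T_q^\ell\lesssim q^{-2}+\min\{q^22^{-3\ell},2^\ell q^{-2}\}\lesssim\min\{2^L/q^2,1/|q|\}$ for all $\ell_0-1\le\ell<L$, exactly as needed, and your treatment of the advection term (same $T_q^\ell$ without the $(2\pi q)^2$ prefactor), of the reaction term (plain Parseval, which is in fact sharper than the paper's $\gamma=1/2$ bound), and of $q=0$ all match or improve on the paper. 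The trade-off: your near/far argument is shorter and avoids the delicate $S_{1,2}$, $S_{2,2}$ integral comparisons, while the paper's route yields the explicit decay of $S_y(q)$ for the two exponents and makes the role of the two competing upper bounds (linear in $N$ versus linear in $|q|^{-1}$) more transparent for the subsequent truncation and sampling-measure analysis in Theorem~\ref{thm:CORSING_rec_1D}.
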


\begin{proof}
First, let us consider the case $q = 0$. We have $|(\eta \psi_{\ell,k}',\xi_0')| = |(\eta \psi_{\ell,k}',0)| = 0$,
$$
|(\beta \psi_{\ell,k}',\xi_0)| 
= \left|\int_{\mathcal{D}}\beta(x) \psi_{\ell,k}'(x) \de x\right|
= \left|\int_{\mathcal{D}}\beta'(x) \psi_{\ell,k}(x) \de x\right| 
\leq |\beta|_{H^1(\mathcal{D})} \|\psi_{\ell,k}\|_{L^2(\mathcal{D})},
$$
and 
$$
|(\rho \psi_{\ell,k},\xi_0)| 
\leq \|\rho\|_{L^2(\mathcal{D})} \|\psi_{\ell,k}\|_{L^2(\mathcal{D})}.
$$
Therefore, 
\begin{align*}
|a(\widehat{\psi}_{\ell,k},\widehat{\xi_0})|^2
& \leq  \left(\frac{|\beta|_{H^1(\mathcal{D})} \|\psi_{\ell,k}\|_{L^2(\mathcal{D})} + \|\rho\|_{L^2(\mathcal{D})} \|\psi_{\ell,k}\|_{L^2(\mathcal{D})}}{\|\psi_{\ell,k}\|_{H^1(\mathcal{D})}\|\xi_0\|_{H^1(\mathcal{D})}} \right)^2 \\
& = \left(\frac{\|\psi_{\ell,k}\|_{L^2(\mathcal{D})}}{\|\psi_{\ell,k}\|_{H^1(\mathcal{D})}}\right)^2 (|\beta|_{H^1(\mathcal{D})}  + \|\rho\|_{L^2(\mathcal{D})})^2 
 \lesssim 2^{-2\ell}(|\beta|_{H^1(\mathcal{D})}^2  + \|\rho\|_{L^2(\mathcal{D})}^2),
\end{align*}
which, by maximization over $\ell$ and $k$, implies the estimate for $\mu_0$.

Now, let $q\neq 0$.  The idea is to expand the diffusion, advection, and reaction terms with respect to the Fourier basis 
\begin{equation}
\eta = \sum_{r \in \mathbb{Z}} \eta_r \xi_r, \quad
\beta = \sum_{r \in \mathbb{Z}} \beta_r \xi_r, \quad
\rho = \sum_{r \in \mathbb{Z}} \rho_r \xi_r,
\end{equation}
with $\eta_r := (\eta,\xi_r)$, $\beta_r := (\beta,\xi_r)$, and $\rho_r := (\rho,\xi_r)$, for every $r \in \mathbb{Z}$. The decay of the coefficients $(\eta_r)_{r \in \mathbb{Z}}$, $(\beta_r)_{r \in \mathbb{Z}}$, and $(\rho_r)_{r \in \mathbb{Z}}$ is strictly linked with the Sobolev regularity of  $\eta$, $\beta$, and $\rho$, respectively, thanks to the norm equivalence \eqref{eq:equivHmFourier}. The estimate of $\mu_q$ is divided into four parts. In the first three parts, we assess the impact of the terms $\mu$, $\beta$, and $\rho$ on the final estimate separately. Then, we combine these estimates in the fourth part.

\paragraph{Part I: diffusion term $\eta$ ($q \neq 0$).} We start by considering the diffusion term
\begin{equation}
\label{eq:diffterm1Ddeco}
|(\dif\psi_{\ell,k}',\xi_q')|^2 
 = \bigg|\sum_{r\in\ZZ} \bigg(\frac{1+r^2}{1+r^2}\bigg)^{\frac12} \dif_r (\xi_r \psi_{\ell,k}',\xi_q')\bigg|^2
\leq \bigg(\sum_{r\in\ZZ}(1+r^2)|\dif_r|^2 \bigg) 
\sum_{r\in\ZZ} \frac{|(\xi_r \psi_{\ell,k}',\xi_q')|^2}{1+r^2}.
\end{equation}
Recalling \eqref{eq:equivHmFourier}, the first factor is bounded from above by  $\|\dif\|_{H^1(\mathcal{D})}^2$. Now, exploiting standard algebraic properties of the Fourier basis, performing integration by parts, and using \eqref{eq:1DestADRtermswave} and \eqref{eq:1Destadvq0}, we obtain the following estimate for every $\gamma \in [0,2]$:
\begin{equation}
\label{eq:diffterm1DdecoUB}
\sum_{r\in\ZZ} \frac{|(\xi_r \psi_{\ell,k}',\xi_q')|^2}{1+r^2} 
 \sim |q|^2 \sum_{r\in\ZZ} \frac{|(\psi_{\ell,k}',\xi_{q-r})|^2}{1+r^2} 
\lesssim
q^2 2^{(3-2\gamma)\ell} S_{2(1-\gamma)}(q),
\end{equation}
where
\begin{equation}
S_y(q):=\sum_{r\in\ZZ\setminus\{ q\}}\frac{ 1}{|q-r|^{y}(1+r^2)}, \quad \forall  y \in \mathbb{R}.
\end{equation}
Now, we study the asymptotic behavior of $S_{y}(q)$ for $y = 1$, and $y = 2$ (corresponding to $\gamma = 1/2$, and $\gamma = 0$, respectively).\footnote{Choosing $\gamma = 1/2$ and $\gamma = 0$ leads to the estimates $S_1(q) \lesssim 1/|q|$ and $S_2(q) \lesssim 1/|q|^2$, respectively. These, in turn, imply two upper bounds to $\mu_q$: the first one independent of $N$ and decaying linearly with respect to $q$, the second one linear in $N$ but decaying quadratically with respect to $q$. These two properties will be crucial to answer issues (i) and (ii) in Theorem~\ref{thm:CORSING_rec_1D}.}

We start by considering $S_{1}(q)$, when $q>0$. We have the splitting
\begin{equation}
S_{1}(q) = 
\underbrace{\sum_{r=-\infty}^{-1}\frac{1}{(q-r)(1+r^2)}}_{S_{1,1}(q)} 
+ \underbrace{\sum_{r=0}^{q-1}\frac{1}{(q-r)(1+r^2)}}_{S_{1,2}(q)}
+ \underbrace{\sum_{r=q+1}^{+\infty}\frac{1}{(r-q)(1+r^2)}}_{S_{1,3}(q)}.
\label{eq:splittingS-1} 
\end{equation}
We study the three sums separately, to show that each term can be bounded from above by $1/q$, up to a constant. Indeed,
\begin{align}
S_{1,1}(q) & =\sum_{r=1}^{+\infty}\frac{1}{(q+r)(1+r^2)}
\leq \frac1q \sum_{r=1}^{+\infty}\frac{1}{1+r^2}
\lesssim \frac1q,\\
S_{1,2}(q) & \leq \frac1q + \frac{1}{1+(q-1)^2}+\int_0^{q-1}\frac{1}{(q-r)(1+r^2)}\de r\\
 & = \frac1q + \frac{1}{1+(q-1)^2}+
 \bigg[\frac{2q\arctan(r)-2\log(q-r) + \log(1+r^2)}{2(1+q^2)}\bigg]\bigg|_{r=0}^{r=q-1}\\
 & = \frac1q + \frac{1}{1+(q-1)^2}+\frac{2q\arctan(q-1)+2\log(q) + \log(1+(q-1)^2)}{2(1+q^2)}
 \lesssim \frac1q,\\
S_{1,3}(q) & 
\lesssim \frac{1}{q} \sum_{r=q+1}^{+\infty} \frac{1}{(r-q)(r+1)}
\lesssim \frac{1}{q} \sum_{r=1}^{+\infty} \frac{1}{r^2}
\lesssim \frac{1}{q}.
\end{align}
The first inequality employed to bound $S_{1,2}(q)$ relies on a property of the function $g(r) := 1/[(q-r)(1+r^2)]$ such that for every $q \geq 2$, $g$ admits only one stationary point $r^*=\frac13(q-\sqrt{q^2-3})$ in the open interval $(0,q-1)$ such that $g(r^*) \leq g(q-1) \leq g(0)$, and  $g$ decreases monotonically in $[0,r^*]$ and increases monotonically in $[r^*,q-1]$. In particular, this implies
\begin{align*}
\sum_{r=0}^{q-1} g(r)
& = \sum_{r=0}^{\lfloor r^* \rfloor} g(r) + \sum_{r=\lfloor r^* \rfloor+1}^{q-1} g(r)
\leq g(0) + \int_0^{\lfloor r^*\rfloor} g(r) \de r + g(q-1) + \int_{\lfloor r^*\rfloor + 1}^{q-1} g(r) \de r\\
& \leq g(0) +  g(q-1) + \int_{0}^{q-1} g(r) \de r.
\end{align*}
Also, notice that when $q = 1$ the term $S_{1,2}(q)$ is equal to 1. 

Observing that $S_y(q)$ is even with respect to $q$, we conclude that $S_{1}(q) \lesssim 1/|q|$, for every $q\neq 0$.

We carry out a similar analysis for $S_{2}(q)$. For $q>0$, we have
\begin{equation}
\label{eq:splittingS-2}
S_{2}(q) = 
\underbrace{\sum_{r=-\infty}^{-1}\frac{1}{(q-r)^2(1+r^2)}}_{S_{2,1}(q)} 
+ \underbrace{\sum_{r=0}^{q-1}\frac{1}{(q-r)^2(1+r^2)}}_{S_{2,2}(q)}
+ \underbrace{\sum_{r=q+1}^{+\infty}\frac{1}{(r-q)^2(1+r^2)}}_{S_{2,3}(q)}. 
\end{equation}
Using arguments similar to those employed for $S_{1}(q)$, we obtain
\begin{align}
S_{2,1}(q)& 
\leq \frac{1}{(q+1)^2}\sum_{r=1}^{+\infty} \frac{1}{1+r^2}\lesssim \frac{1}{q^2} \\
S_{2,2}(q)& 
\leq \frac{1}{q^2} + \frac{1}{1+(q-1)^2} 
+ \int_0^{q-1}\frac{1}{(q-r)^2(1+r^2)}  \de r\\
& \lesssim \frac{1}{q^2} 
+ \frac{(q-1)(1+q^2) + (q^3-q)\arctan(q-1) + q^2 (2\log(q)+\log(1+(q-1)^2))}{q(1+q^2)^2}
 \lesssim \frac{1}{q^2},\\
S_{2,3}(q) & 
\leq \frac{1}{1+(q+1)^2}\sum_{r=q+1}^{+\infty}\frac{1}{(r-q)^2}
\lesssim\frac{1}{q^2}.
\end{align}
Using again that $S_y(q)$ is even with respect to $q$, we conclude that $S_{2}(q) \lesssim 1/q^2$, for every $q \neq 0$.\footnote{In view of Lemma~\ref{lem:1DestADR}, we conjecture that $S_{y}(q) \lesssim 1/|q|^{y}$ for every $y \in [-4,2]$ (corresponding to $\gamma\in[0,2]$ and $y = 2(1-\gamma)$). Nevertheless, proving this rigorously is not straightforward, since the terms corresponding to $S_{1,2}(q)$ and $S_{2,2}(q)$ become  very difficult to analyze.}

Recalling relations \eqref{eq:diffterm1Ddeco} and \eqref{eq:diffterm1DdecoUB}, we have  
\begin{equation}
\label{eq:mu_dif_1D}
|(\dif \psi_{\ell,k}',\xi_q')|^2 
\lesssim \|\dif\|_{H^1(\mathcal{D})}^2 q^2 \min\{ 2^{3\ell} S_{2}(q), 2^{2\ell} S_{1}(q)\}
\lesssim \|\dif\|_{H^1(\mathcal{D})}^2 2^{2\ell} \min\{ 2^{\ell} , |q| \}.
\end{equation}

\paragraph{Part II: advection term $\beta$ ($q \neq 0$).}
Analogously to the diffusion case, we have
\begin{align}
|(\beta \psi_{\ell,k}', \xi_q)|^2 
& \leq \bigg(\sum_{r \in \mathbb{Z}}|\beta_r|^2(1+r^2)\bigg)
\sum_{r \in \mathbb{Z}} \frac{|(\xi_r \psi'_{\ell,k}, \xi_q)|^2}{1+r^2}
\sim \|\beta\|_{H^1(\mathcal{D})}^2 \sum_{r \in \mathbb{Z}} \frac{|(\psi'_{\ell,k}, \xi_{q-r})|^2}{1+r^2}.
\end{align}
Estimating the sum in the right-hand side as before, we obtain
\begin{equation}
\label{eq:mu_adv_1D}
|(\beta \psi_{\ell,k}', \xi_q)|^2 
\lesssim
\frac{\|\beta\|^2_{H^1(\mathcal{D})}}{q^2} 2^{2\ell} \min\{2^\ell,|q|\}.
\end{equation}

\paragraph{Part III: reaction term $\rho$ ($q \neq 0$).} We deal with the nonconstant reaction term in an analogous way. Recalling Lemma~\ref{lem:1DestADR} and the norm equivalence \eqref{eq:equivHmFourier} with $k=0$, we have
\begin{equation}
|(\rea \psi_{\ell,k},\xi_q)|^2 
\leq \bigg(\sum_{r \in \ZZ} |\rea_r|^2\bigg)
\sum_{r\in\ZZ} |(\psi_{\ell,k},\xi_{q-r})|^2
\lesssim \|\rea\|_{L^2(\mathcal{D})}^2 \bigg(2^{(3-2\gamma)\ell}\bigg(\sum_{r \in \ZZ \setminus \{q\}} |q-r|^{2(\gamma-2)}\bigg)  + 2^{-\ell/2}\bigg),
\end{equation}
for every $\gamma \in [0,2]$, where we have employed \eqref{eq:1DestADRtermsscal}-\eqref{eq:1DestADRtermswave} with $(\alpha_1,\alpha_2) = (0,0)$ and $\xi_{q-r}$ for $r\neq q$ and \eqref{eq:1Destreaq0} with $\xi_{q-r}=\xi_0$ for $r=q$. Choosing $\gamma = 1/2$, we see that  
\begin{equation}
\label{eq:mu_rea_1D}
|(\rea \psi_{\ell,k},\xi_q)|^2 
\lesssim \|\rea\|_{L^2(\mathcal{D})}^2 (2^{2\ell} + 1) 
\lesssim \|\rea\|_{L^2(\mathcal{D})}^2 2^{2\ell}, 
\end{equation}
since $\sum_{r \in \ZZ \setminus \{q\}} |q-r|^{-3} \lesssim 1$, for every $q \neq 0$.  
\paragraph{Part IV: conclusion ($q \neq 0$).} Combining \eqref{eq:mu_dif_1D}, \eqref{eq:mu_adv_1D}, and \eqref{eq:mu_rea_1D} finally yields
\begin{equation}
|a(\psi_{\ell,k},\xi_q)|^2 
\lesssim \left(\|\dif\|_{H^1(\mathcal{D})}^2 + \frac{\|\beta\|_{H^1(\mathcal{D})}^2}{q^2} + \|\rea\|_{L^2(\mathcal{D})}^2\right) 2^{2\ell} \min\{2^\ell,|q|\}.
\end{equation}
As a consequence, normalizing the trial and test functions with respect to the $H^1(\mathcal{D})$-norm and using that $\|\psi_{\ell,k}\|_{H^1(\mathcal{D})} \sim 2^\ell$ and that $\|\xi_q\|_{H^1(\mathcal{D})} \sim |q|$ (recall \eqref{eq:normpsij_ani_1} and \eqref{eq:normxiqmulti-d}), we obtain
\begin{equation}
\mu_q \lesssim \left(\|\dif\|_{H^1(\mathcal{D})}^2 + \frac{\|\beta\|_{H^1(\mathcal{D})}^2}{q^2} + \|\rea\|_{L^2(\mathcal{D})}^2\right)  \min\bigg\{\frac{2^L}{q^2},\frac{1}{|q|}\bigg\}.
\end{equation}
This completes the proof.
\end{proof}

\begin{rmrk}[Sharper upper bound for $\rho \in H^1_\per(\mathcal{D})$]
\label{rmrk:mu_bound_rhoH1}
It is not difficult to show that an upper bound for $\mu_q$ as in Theorem~\ref{thm:mu_AReq1D} holds when $\rea \in H^1_\per(\mathcal{D})$, with the following estimate for the local $a$-coherence when $q \neq 0$:
\begin{equation}
\label{eq:mu_est_1D_2}
\mu_q 
\lesssim \left(\|\dif\|_{H^1(\mathcal{D})}^2 + \frac{\|\beta\|_{H^1(\mathcal{D})}^2}{q^2} + \frac{\|\rea\|_{H^1(\mathcal{D})}^2}{q^4}\right)  
\min\bigg\{\frac{2^L}{q^2},\frac{1}{|q|}\bigg\}.
\end{equation}
Notice that \eqref{eq:mu_est_1D_2} generalizes \cite[Proposition 4.4]{Brugiapaglia2018} (in particular, we refer to \cite[equation (145)]{Brugiapaglia2018}) for 1D ADR equations with constant coefficients and nonperiodic boundary conditions. \hfill $\blacksquare$
\end{rmrk}

Finally, we have the \corsing \WF recovery theorem that provides an answer to the three items (i), (ii), and (iii) in Section~\ref{sec:i,ii,iii} for the 1D case.

\begin{thm}[{\corsing \WF recovery}] 
\label{thm:CORSING_rec_1D}
In Setting~\ref{ass:Bsplwave} and under the same hypotheses as in Theorem~\ref{thm:CORSINGrecovery}, let $\Dim = 1$, $\eta, \beta \in H^1_\per(\mathcal{D})$ and $\rho \in L^2(\mathcal{D})$. Then, provided 
\begin{itemize}
\item[\textup{(i)}] $R  \sim C s N$,
\item[\textup{(ii)}] $m  \gtrsim C  s  (s \ln(eN/(2s)) + \ln(2 s/\varepsilon)) (\ln N + \ln s + \ln C)$,
\item[\textup{(iii)}] $p_q  \propto \begin{cases}
1, & q = 0, \\
\min\left\{\frac{N}{q^2}, \frac{1}{|q|}\right\}, & q \neq 0,
\end{cases}$
\end{itemize}
where 
\begin{equation}
\label{eq:defC_1D}
C := \|\eta\|_{H^1(\mathcal{D})}^2 + \|\beta\|_{H^1(\mathcal{D})}^2 + \|\rho\|_{L^2(\mathcal{D})}^2,
\end{equation}
the \corsing \WF method recovers the best $s$-term approximation to $u$ in expectation, in the sense of estimate \eqref{eq:corsingrecovery}.
\end{thm}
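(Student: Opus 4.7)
\medskip

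\noindent\textbf{Proof proposal.} The plan is to apply Theorem~\ref{thm:CORSINGrecovery} with an explicit upper bound $\bm{\nu}$ on the local $a$-coherence furnished by Theorem~\ref{thm:mu_AReq1D}. Concretely, using $2^L = N$ and absorbing the constants from Theorem~\ref{thm:mu_AReq1D} into the quantity $C$ defined in \eqref{eq:defC_1D}, I would set
\begin{equation*}
\nu_0 := C,
\qquad
\nu_q := C \min\!\left\{\frac{N}{q^2},\frac{1}{|q|}\right\}, \quad q \in \mathbb{Z}\setminus\{0\},
\end{equation*}
so that $\bm{\mu} \lesssim \bm{\nu}$. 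With this choice, the probability distribution in item~(iii) is exactly the one prescribed by \eqref{eq:defp}, once restricted and normalized on $\mathcal{Q}$.

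The key quantitative step is to verify the truncation condition~\eqref{eq:trunc_cond} and to estimate $\|\bm{\nu}|_{\mathcal{Q}}\|_1$. For the tail, note that the threshold $|q| = N$ is where the two branches of the minimum cross. I would argue that for $R \sim CsN$ we have $R/2 \geq N$ (at least when the implicit constant is large enough), so $\min\{N/q^2,1/|q|\} = N/q^2$ for every $|q| > R/2$; then
\begin{equation*}
\|\bm{\nu}|_{\mathcal{Q}^c}\|_1
\;\lesssim\; C \sum_{|q| > R/2}\frac{N}{q^2}
\;\lesssim\; \frac{CN}{R}
\;\lesssim\; \frac{1}{s},
\end{equation*}
which yields item~(i). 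For the in-band mass, the crude bound $\nu_q \leq C/|q|$ already gives
\begin{equation*}
\|\bm{\nu}|_{\mathcal{Q}}\|_1
\;\lesssim\; C\sum_{1\leq |q|\leq R/2}\frac{1}{|q|} + C
\;\lesssim\; C \ln(R)
\;\lesssim\; C\,(\ln N + \ln s + \ln C),
\end{equation*}
after substituting $R \sim CsN$. Plugging this into \eqref{eq:m_rate} and simplifying $\ln(eN/s)$ to $\ln(eN/(2s))$ (they differ by an $O(1)$ constant) produces the sample-complexity estimate in item~(ii).

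Finally, I would invoke Theorem~\ref{thm:CORSINGrecovery} with the triple $(R,m,\bm{p})$ just constructed to conclude the recovery bound~\eqref{eq:corsingrecovery}. The main technical obstacle is the tail/head splitting, since one must argue that $R \sim CsN$ implies $R/2 \geq N$ (so the tail sum collapses to $N/q^2$) while still keeping the in-band sum of order $\ln R$; a careful choice of the implicit constants, together with an inspection of the trivial case $CsN \lesssim 1$, makes the argument clean. Everything else — the definition of $\bm{p}$, the well-posedness hypotheses, and the Riesz basis property needed for the invocation of Theorem~\ref{thm:CORSINGrecovery} — is already in place through Proposition~\ref{prop:ADRwellposed}, Theorem~\ref{thm:normequiv}, and the normalization set up in Section~\ref{sec:CORSINGWF}.
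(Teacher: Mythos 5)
Your proposal is correct and follows essentially the same route as the paper: the same choice of $\bm{\nu}$ from Theorem~\ref{thm:mu_AReq1D}, the tail bound via $\nu_q \lesssim CN/q^2$ giving $R \sim CsN$, and the in-band bound via $\nu_q \lesssim C/|q|$ giving $\|\bm{\nu}|_{\mathcal{Q}}\|_1 \lesssim C\ln R$. The only remark is that the "technical obstacle" you flag is a non-issue: there is no need to locate where the two branches of the minimum cross, since $\min\{N/q^2,1/|q|\}\leq N/q^2$ holds unconditionally, which is exactly how the paper handles the tail.
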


\begin{proof}
Employing Theorem~\ref{thm:mu_AReq1D}, we choose
\begin{equation}
\nu_q = 
\begin{cases}
C, & q = 0, \\
C \min\left\{\frac{N}{q^2}, \frac{1}{|q|}\right\}, & q \neq 0.
\end{cases}
\end{equation}
As a consequence, we have $\|\bm{\nu}\|_1 < +\infty$. Then, to ensure condition \eqref{eq:trunc_cond}, using that $\nu_q \leq C N/q^2$ for $q\neq0$ we estimate
\begin{equation}
s \|\bm{\nu}|_{\mathcal{Q}^c}\|_1
\leq C s N \sum_{|q| \geq \lfloor R/2 \rfloor -1} \frac{1}{q^2} 
\lesssim \frac{C s N}{R}.
\end{equation}
Therefore, to ensure \eqref{eq:trunc_cond} we let $R = R(s,N) \sim C s N$. Moreover, using that $\nu_q\leq C/|q|$ for $q\neq 0$, we see that
$$
\|\bm{\nu}|_{\mathcal{Q}}\|_1 
\leq C\bigg(1 + \sum_{0< |q| \leq \lfloor R/2 \rfloor} \frac{1}{|q|}\bigg)
\lesssim C \ln R = C \ln M \sim C(\ln N  + \ln s +\ln{C}),
$$
which depends sublinearly on $M$ and $N$, as desired. 
\end{proof}

\subsection{The multi-dimensional case}
\label{sec:est_dD_ani}

We consider issues (i), (ii), and (iii) in Section~\ref{sec:i,ii,iii} for the \corsing \WF  for multi-dimensional ADR equations with constant coefficients. In Section~\ref{sec:aniso} we analyze the case of anisotropic tensor product wavelet, while in Section~\ref{sec:iso} we deal with the isotropic case.

\subsubsection{Anisotropic tensor product wavelets}
\label{sec:aniso}

We provide local $a$-coherence upper bounds (Theorem~\ref{thm:mu_bound_multi_ani}) and a recovery result (Theorem~\ref{thm:CORSING_rec_multi_ani}) for the \corsing \WF method with anisotropic tensor product wavelets.

We start by proving a technical result analogous to Lemma~\ref{lem:1DestADR}. To shorten notations, we introduce 
$$
|\bm{x}|^\bm{y} := \prod_{j = 1}^k |x_j| ^{y_j}, \quad \forall \bm{x}, \bm{y} \in \mathbb{R}^k, \; \forall k \in \NN.
$$
Moreover, we denote $\bm{1} = (1,1,\ldots,1)$, $\bm{2} = (2,2,\ldots,2)$, and so on. 
\begin{lem}[Auxiliary inequalities, anisotropic wavelets]
\label{lem:aux_ineq_multid_ani}
In Setting~\ref{ass:Bsplwave}, let $n >1$, $\bm{\ell} \in \NN^\Dim$, with $\bm{\ell} \geq \ell_0-1$, $\bm{k} \in \mathbb{Z}/(2^{\bm{\ell}}\mathbb{Z})$, and $\bm{q} \in \ZZ^\Dim$. Moreover, define\footnote{Note that, according to \eqref{eq:notationl0-1}, $\scal(\bm{\ell})$ is the set of indices $j$ such that $\psi_{\ell_j,k_j}$ is a scaling function.} 
\begin{equation}
\label{eq:def_zero_scal}
\zero(\bm{q})  := [\Dim] \setminus \supp(\bm{q}), \quad
\scal(\bm{\ell})  := \{j\in[\Dim] : \ell_j = \ell_0 -1\}.
\end{equation}
Then, it follows
\begin{itemize}
\item if $\bm{q} = \bm{0}$, we have
\begin{align}
\label{eq:aux_multi_1}
|(\nabla \psi^{\ani}_{\bm{\ell},\bm{k}}, \nabla \xi_{\bm{0}})| & = 0,\\
\label{eq:aux_multi_2}
|(\bm{\beta} \cdot \nabla \psi^{\ani}_{\bm{\ell},\bm{k}},  \xi_{\bm{0}})| & = 0, \quad \forall \bm{\beta} \in \mathbb{R}^\Dim\\
\label{eq:aux_multi_3}
|(\psi^{\ani}_{\bm{\ell},\bm{k}}, \xi_{\bm{0}})| & = 
\begin{cases}
2^{- \Dim\ell_0 / 2}, & \mathrm{if }\; \scal(\bm{\ell}) = [\Dim],\\
0, & \mathrm{otherwise;}
\end{cases}
\end{align}
\item if $\bm{q} \neq \bm{0}$ and $\zero(\bm{q}) \subseteq \scal(\bm{\ell})$, then, for every $\gammavec \in [0,2]^{\|\bm{q}\|_0}$, it holds
\begin{align}
|(\nabla \psi^{\ani}_{\bm{\ell},\bm{k}},\nabla \xi_{\bm{q}})| 
& \lesssim
2^{-\frac12(\Dim-\|\bm{q}\|_0)\ell_0 + (\frac32 - \widehat\gammavec) \cdot \widehat{\bm{\ell}}} 
|\widehat{\bm{q}}|^{\widehat\gammavec - \bm{2}}
\|\bm{q}\|_2^2, \label{eq:aux_multi_4}\\
|(\bm{\beta} \cdot \nabla \psi^{\ani}_{\bm{\ell},\bm{k}},\xi_{\bm{q}})| 
& \lesssim 
2^{-\frac12(\Dim-\|\bm{q}\|_0)\ell_0 + (\frac32 - \widehat\gammavec) \cdot \widehat{\bm{\ell}}} 
|\widehat{\bm{q}}|^{\widehat\gammavec - \bm{2}} \|\bm{\beta}\|_2 \|\bm{q}\|_2, 
\quad \forall \bm{\beta} \in \mathbb{R}^\Dim,\label{eq:aux_multi_5} \\
|(\psi^{\ani}_{\bm{\ell},\bm{k}},\xi_{\bm{q}})| 
& \lesssim 
2^{-\frac12(\Dim-\|\bm{q}\|_0)\ell_0 + (\frac32 - \widehat\gammavec) \cdot \widehat{\bm{\ell}}} 
|\widehat{\bm{q}}|^{\widehat\gammavec - \bm{2}},
\label{eq:aux_multi_6}
\end{align}
where $\widehat{\bm{x}} := \bm{x}|_{\supp(\bm{q})} \in \mathbb{R}^{\|\bm{q}\|_0}$ for every $\bm{x} \in \mathbb{R}^\Dim$ and the inequalities hide constants that depend exponentially on $\Dim$;

\item if $\bm{q} \neq \bm{0}$ and $\zero(\bm{q}) \not\subseteq \scal(\bm{\ell})$, it holds 
\begin{align}
\label{eq:aux_multi_7}
|(\nabla \psi^{\ani}_{\bm{\ell},\bm{k}}, \nabla \xi_{\bm{q}})| & = 0,\\
\label{eq:aux_multi_8}
|(\bm{\beta} \cdot \nabla \psi^{\ani}_{\bm{\ell},\bm{k}}, \xi_{\bm{q}})| & = 0, \quad \forall \bm{\beta} \in \mathbb{R}^\Dim\\
\label{eq:aux_multi_9}
|(\psi^{\ani}_{\bm{\ell},\bm{k}}, \xi_{\bm{q}})| & = 0.
\end{align}
\end{itemize}
\end{lem}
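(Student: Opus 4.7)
The plan is to exploit systematically the tensor product structure of both $\psi^{\ani}_{\bm{\ell},\bm{k}}$ and $\xi_{\bm{q}}$, reducing every estimate to a product of one-dimensional factors controlled by Lemma~\ref{lem:1DestADR}. Concretely, for the reaction and gradient terms we will use
\[
(\psi^{\ani}_{\bm{\ell},\bm{k}},\xi_{\bm{q}}) = \prod_{j=1}^{\Dim}(\psi_{\ell_j,k_j},\xi_{q_j}),
\qquad
(\nabla\psi^{\ani}_{\bm{\ell},\bm{k}},\nabla\xi_{\bm{q}}) = \sum_{j=1}^{\Dim}(\psi'_{\ell_j,k_j},\xi'_{q_j})\prod_{i\neq j}(\psi_{\ell_i,k_i},\xi_{q_i}),
\]
and analogously for the advection inner product with the factor $\beta_j$ in front of the $j$-th summand. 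In all three cases, the claimed estimates will follow once the correct 1D bound (chosen according to whether the factor is a scaling function or a genuine wavelet, and whether the Fourier index is zero or not) is assigned to each slot.

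For the case $\bm{q}=\bm{0}$, identities \eqref{eq:aux_multi_1}--\eqref{eq:aux_multi_2} are immediate since $\xi_0\equiv 1$ implies $\xi'_0\equiv 0$. For \eqref{eq:aux_multi_3}, the product $\prod_j(\psi_{\ell_j,k_j},\xi_0)$ contains a vanishing factor as soon as some $\ell_j\geq\ell_0$ (genuine wavelet, zero mean), by \eqref{eq:1Destreaq0}. Only the case $\scal(\bm{\ell})=[\Dim]$ survives, and there each factor equals $2^{-\ell_0/2}$ from \eqref{eq:1Destreaq0}, giving the announced $2^{-\Dim\ell_0/2}$.

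For the case $\bm{q}\neq\bm{0}$ with $\zero(\bm{q})\not\subseteq\scal(\bm{\ell})$, there is an index $j^{*}$ such that $q_{j^{*}}=0$ and $\psi_{\ell_{j^{*}},k_{j^{*}}}$ is a genuine wavelet. In the reaction inner product, the factor indexed by $j^{*}$ vanishes by \eqref{eq:1Destreaq0}, proving \eqref{eq:aux_multi_9}. In each summand of the gradient (resp. advection) inner product, if $j\neq j^{*}$ we still pick up the vanishing factor $(\psi_{\ell_{j^{*}},k_{j^{*}}},\xi_0)=0$, whereas if $j=j^{*}$ the factor $(\psi'_{\ell_{j^{*}},k_{j^{*}}},\xi'_0)$ (resp.\ $(\psi'_{\ell_{j^{*}},k_{j^{*}}},\xi_0)$) vanishes by \eqref{eq:1Destdifq0} (resp.\ \eqref{eq:1Destadvq0}), giving \eqref{eq:aux_multi_7}--\eqref{eq:aux_multi_8}.

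The main case is $\bm{q}\neq\bm{0}$ with $\zero(\bm{q})\subseteq\scal(\bm{\ell})$, where I would argue as follows. For $i\in\zero(\bm{q})\subseteq\scal(\bm{\ell})$ the factor $(\psi_{\ell_i,k_i},\xi_{q_i})=(\varphi_{\ell_0,k_i},\xi_0)$ contributes $2^{-\ell_0/2}$ by \eqref{eq:1Destreaq0}, which collectively yields the prefactor $2^{-\frac{1}{2}(\Dim-\|\bm{q}\|_0)\ell_0}$. For $i\in\supp(\bm{q})$ I bound the remaining factors by \eqref{eq:1DestADRtermsscal}--\eqref{eq:1DestADRtermswave}, selecting an exponent $\gamma_i\in[0,2]$ for each. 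Assigning $(\alpha_1,\alpha_2)=(0,0)$ in the reaction identity gives directly \eqref{eq:aux_multi_6}. For the gradient identity, the sum over $j$ reduces to $j\in\supp(\bm{q})$ (the others vanish by \eqref{eq:1Destdifq0}), and assigning $(\alpha_1,\alpha_2)=(1,1)$ to the $j$-th slot and $(0,0)$ to the remaining slots in $\supp(\bm{q})$ produces a common prefactor times $\sum_{j\in\supp(\bm{q})}|\pi q_j|^2 = \pi^2\|\bm{q}\|_2^2$, giving \eqref{eq:aux_multi_4}. The advection bound \eqref{eq:aux_multi_5} is obtained analogously, with $(\alpha_1,\alpha_2)=(1,0)$ in the differentiated slot and a Cauchy--Schwarz step $\sum_{j\in\supp(\bm{q})}|\beta_j|\,|\pi q_j|\leq \pi\|\bm{\beta}\|_2\|\bm{q}\|_2$.

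I expect the bookkeeping of which 1D bound applies in each slot, together with the correct combination of the $\gamma_i$'s and the factors of $\pi$, to be the only delicate point; there are no hidden analytical difficulties beyond Lemma~\ref{lem:1DestADR} and the tensorization of inner products.
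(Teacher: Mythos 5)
Your proposal is correct and follows essentially the same route as the paper's proof: the tensorization identities for the three inner products, the vanishing arguments via \eqref{eq:1Destdifq0}--\eqref{eq:1Destreaq0} for the degenerate cases, the $2^{-\ell_0/2}$ contributions from the slots in $\zero(\bm{q})\subseteq\scal(\bm{\ell})$, and the slot-wise application of \eqref{eq:1DestADRtermsscal}--\eqref{eq:1DestADRtermswave} with the appropriate $(\alpha_1,\alpha_2)$ followed by the $\ell^2$/Cauchy--Schwarz combination over $j\in\supp(\bm{q})$. The only cosmetic difference is that the paper first treats $\|\bm{q}\|_0=\Dim$ and then reduces the general case to it by restriction to $\supp(\bm{q})$, whereas you handle both at once by partitioning the index set directly; the substance is identical.
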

\begin{proof}
In the case $\bm{q} = \bm{0}$, the equalities \eqref{eq:aux_multi_1}, \eqref{eq:aux_multi_2}, and \eqref{eq:aux_multi_3} are a direct consequence of \eqref{eq:1Destdifq0}, \eqref{eq:1Destadvq0}, and \eqref{eq:1Destreaq0}. Let us consider $\bm{q} \neq \bm{0}$. Then, we organize the proof discussing two cases: $\|\bm{q}\|_0 =\Dim$ and $\|\bm{q}\|_0 < \Dim$. 

\paragraph{Case $\|\bm{q}\|_0 =\Dim$.} Due to the tensorized form of the trial and the test basis functions, the following relations hold:
\begin{align}
\label{eq:diftermtensor}
(\nabla \psi^{\ani}_{\bm{\ell},\bm{k}},\nabla \xi_{\bm{q}})
& = \sum_{j = 1}^\Dim (\psi_{\ell_j,k_j}',\xi_{q_j}')
\prod_{i \neq j} (\psi_{\ell_i,k_i},\xi_{q_i}),\\
(\bm{\beta} \cdot \nabla\psi^{\ani}_{\bm{\ell},\bm{k}},\xi_{\bm{q}})
& = \sum_{j = 1}^\Dim \beta_j (\psi_{\ell_j,k_j}',\xi_{q_j})
\prod_{i \neq j} (\psi_{\ell_i,k_i},\xi_{q_i}), \quad \forall \bm{\beta}\in\mathbb{R}^\Dim, \label{eq:advtermtensor}\\
(\psi^{\ani}_{\bm{\ell},\bm{k}},\xi_{\bm{q}})
& = \prod_{j = 1}^\Dim (\psi_{\ell_j,k_j},\xi_{q_j}).\label{eq:reatermtensor}
\end{align}
Plugging relations \eqref{eq:1DestADRtermsscal} and  \eqref{eq:1DestADRtermswave} into \eqref{eq:diftermtensor} we see that, for every $\gammavec \in [0,2]^\Dim$, we have
\begin{align}
|(\nabla \psi^{\ani}_{\bm{\ell},\bm{k}},\nabla \xi_{\bm{q}})| 
& \lesssim
\sum_{j = 1}^\Dim 2^{(\frac32 - \gamma_j) \ell_j}  |q_j|^{\gamma_j}
\prod_{i \neq j} 2^{(\frac32 - \gamma_i) \ell_i}  |q_i|^{\gamma_i-2}
 =
2^{\frac32 \|\bm{\ell}\|_1 - \gammavec \cdot \bm{\ell}} 
|\bm{q}|^{\gammavec -\bm{2}}
\|\bm{q}\|_2^2.
\end{align}
Similarly, plugging \eqref{eq:1DestADRtermsscal} and  \eqref{eq:1DestADRtermswave} into \eqref{eq:advtermtensor}, we obtain
\begin{align}
|(\bm{\beta} \cdot \nabla\psi^{\ani}_{\bm{\ell},\bm{k}}, \xi_{\bm{q}})| 
& \lesssim
\sum_{j = 1}^\Dim |\beta_j|2^{(\frac32 - \gamma_j) \ell_j}  |q_j|^{\gamma_j-1}
\prod_{i \neq j} 2^{(\frac32 - \gamma_i) \ell_i}  |q_i|^{\gamma_i-2}
 \leq
2^{\frac32 \|\bm{\ell}\|_1 - \gammavec \cdot \bm{\ell}} 
|\bm{q}|^{\gammavec -\bm{2}}
\|\bm{\beta}\|_2 \|\bm{q}\|_2.
\end{align}
Finally, plugging \eqref{eq:1DestADRtermsscal} and  \eqref{eq:1DestADRtermswave} into \eqref{eq:reatermtensor}, it follows
\begin{equation}
|(\psi^{\ani}_{\bm{\ell},\bm{k}}, \xi_{\bm{q}})| 
\lesssim \prod_{j = 1}^\Dim 2^{(\frac32-\gamma_j)\ell_j} |q_j|^{\gamma_j - 2} 
= 2^{\frac32 \|\bm{\ell}\|_1 - \gammavec \cdot \bm{\ell}} 
|\bm{q}|^{\gammavec -\bm{2}}.
\end{equation}
The relations above prove \eqref{eq:aux_multi_4}, \eqref{eq:aux_multi_5}, and \eqref{eq:aux_multi_6}.

\paragraph{Case $\|\bm{q}\|_0 < \Dim$.} Let us consider the diffusion term $(\nabla \psi^{\ani}_{\bm{\ell},\bm{k}}, \nabla \xi_{\bm{q}})$. 

First, assume that $\zero(\bm{q}) \nsubseteq \scal(\bm{\ell})$ (notice also that $\zero(\bm{q})$ is nonempty since  $\|\bm{q}\|_0 < \Dim$). In this case, we can pick an index $j_0 \in \zero(\bm{q}) \setminus \scal(\bm{\ell})$, i.e., such that $q_{j_0} = 0$ and that  $\ell_{j_0} \geq \ell_0$ (that is  such that $\psi_{\ell_{j_0},k_{j_0}}$ is a wavelet function and not a scaling function).  Combining  \eqref{eq:diftermtensor} with relations \eqref{eq:1Destdifq0} and \eqref{eq:1Destreaq0} yields 
\begin{equation}
(\nabla \psi_{\bm{\ell},\bm{k}}^{\ani},\nabla \xi_{\bm{q}}) 
= 
\underbrace{(\psi_{\ell_{j_0},k_{j_0}}',\xi_{0}')}_{=0}
\prod_{i \neq j_0} (\psi_{\ell_i,k_i},\xi_{q_i})
+
\sum_{j \neq j_0} (\psi_{\ell_j,k_j}',\xi_{q_j}') 
\underbrace{(\psi_{\ell_{j_0},k_{j_0}},\xi_{0})}_{=0} 
\prod_{i \notin\{ j,j_0\}} (\psi_{\ell_i,k_i},\xi_{q_i})
= 0.
\end{equation}
This proves \eqref{eq:aux_multi_7} (\eqref{eq:aux_multi_8} and \eqref{eq:aux_multi_9} are shown analogously). As a consequence, the only possibility for $(\nabla \psi^{\ani}_{\bm{\ell},\bm{k}}, \nabla \xi_{\bm{q}})$ to be nonzero is to have $\zero(\bm{q}) \subseteq \scal(\bm{\ell})$. In this case, by splitting the  sum above and employing \eqref{eq:1Destdifq0}, we obtain
\begin{align}
|(\nabla \psi^{\ani}_{\bm{\ell},\bm{k}}, \nabla\xi_{\bm{q}})|
& \leq 
\sum_{j \in \zero(\bm{q})} \underbrace{|(\varphi_{\ell_0,k_j}',\xi_0')|}_{=0} \prod_{i \neq j} |(\psi_{\ell_i,k_i},\xi_{q_i})|
+
\sum_{j \in \supp(\bm{q})} |(\psi_{\ell_j,k_j}',\xi_{q_j}')| 
\prod_{i \neq j} |(\psi_{\ell_i,k_i},\xi_{q_i})|.
\end{align}
Now, splitting the product and using \eqref{eq:1Destreaq0} yields
\begin{align}
|(\nabla \psi^{\ani}_{\bm{\ell},\bm{k}}, \nabla\xi_{\bm{q}})|
& \leq
\sum_{j \in \supp(\bm{q})} |(\psi_{\ell_j,k_j}',\xi_{q_j}')| 
\prod_{i \in \zero(\bm{q}) } \underbrace{|(\varphi_{\ell_0,k_i},\xi_0)|}_{ = 2^{-\ell_0/2} }
\prod_{i \in \supp(\bm{q}) \setminus \{j\}} |(\psi_{\ell_i,k_i},\xi_{q_i})|\\
& =
2^{-(\Dim-\|\bm{q}\|_0)\ell_0/2}
\sum_{j \in \supp(\bm{q})} |(\psi_{\ell_j,k_j}',\xi_{q_j}')| 
\prod_{i \in \supp(\bm{q}) \setminus \{j\}} |(\psi_{\ell_i,k_i},\xi_{q_i})|.
\end{align}
In order to prove \eqref{eq:aux_multi_4}, it is sufficient to apply an argument analogous to the case $\|\bm{q}\|_0 =\Dim$, where the set $[\Dim]$ is replaced with $\supp(\bm{q})$,  $\bm{q}$ with $\widehat{\bm{q}}$ and $\bm{\ell}$ with $\widehat{\bm{\ell}}$.

Similar arguments lead to \eqref{eq:aux_multi_5}, \eqref{eq:aux_multi_6}.
\end{proof}

\begin{rmrk}[Curse of dimensionality]
It is worth stressing that estimates \eqref{eq:aux_multi_1}--\eqref{eq:aux_multi_9} are affected by the curse of dimensionality, since they hide constants that blow up exponentially with $\Dim$. This makes the \corsing \WF approach applicable only for moderate values of $\Dim$. This is not a major problem for fluid-dynamics applications, where the physical domain has always dimension $\Dim \leq 3$. \hfill $\blacksquare$
\end{rmrk}

Equipped with the auxiliary inequalities of Lemma~\ref{lem:aux_ineq_multid_ani}, we are now in a position to provide upper bounds to the local $a$-coherence in the multi-dimensional case for anisotropic tensor product wavelets.

\begin{thm}[Local $a$-coherence upper bound, anisotropic wavelets]
\label{thm:mu_bound_multi_ani}
In Setting~\ref{ass:Bsplwave}, let $n >1$. Then,   for every $\eta, \rho \in \mathbb{R}$ and $\bm{\beta} \in \mathbb{R}^\Dim$, the following upper bounds hold: 
\begin{align}
\label{eq:mu_UB_multid_q0}
\mu_{\bm{0}} & \lesssim |\rho|^2 2^{-(2+ \Dim)\ell_0}.  \\
\label{eq:mu_UB_multid}
\mu_{\bm{q}} & 
\lesssim 
\bigg(|\eta|^2 +  \frac{\|\bm{\beta}\|_2^2}{\|\bm{q}\|_2^2} + \frac{|\rho|^2}{\|\bm{q}\|_2^4}
\bigg)  
2^{-(\Dim-\|\bm{q}\|_0)\ell_0} 
\min\bigg\{\frac{2^{(3\|\bm{q}\|_0-2)L} \|\bm{q}\|_2^2}{ |\widehat{\bm{q}}|^{\bm{4}}}, 
\frac{ \|\bm{q}\|_2^2}{\|\bm{q}\|_\infty^2 |\widehat{\bm{q}}|^{\bm{1}}}\bigg\}, 
\quad \forall \bm{q} \neq \bm{0},
\end{align}
where the inequalities hide constants depending exponentially on $\Dim$.
\end{thm}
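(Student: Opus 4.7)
The proof plan decomposes $a(\psi_{\bm{\ell},\bm{k}}^{\ani},\xi_{\bm{q}})$ via linearity and constant coefficients into its diffusion, advection, and reaction pieces, applies the auxiliary inequalities of Lemma~\ref{lem:aux_ineq_multid_ani} to each piece, normalizes the basis functions in $H^1(\mathcal{D})$, and takes the supremum over $\bm{\ell}$. The two arms of the $\min$ in \eqref{eq:mu_UB_multid} will be obtained by two different, strategic choices of the free parameter $\widehat{\gammavec}\in[0,2]^{\|\bm{q}\|_0}$ appearing in Lemma~\ref{lem:aux_ineq_multid_ani}.

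The case $\bm{q}=\bm{0}$ is immediate: equations \eqref{eq:aux_multi_1} and \eqref{eq:aux_multi_2} kill the diffusion and advection contributions, and \eqref{eq:aux_multi_3} shows the reaction term is nonzero only when all $\ell_j=\ell_0-1$, in which case it equals $2^{-\Dim\ell_0/2}$. Dividing by $\|\psi^{\ani}_{\bm{\ell},\bm{k}}\|_{H^1}^2\|\xi_{\bm{0}}\|_{H^1}^2\sim 2^{2\ell_0}$ yields \eqref{eq:mu_UB_multid_q0}. For $\bm{q}\neq\bm{0}$, equations \eqref{eq:aux_multi_7}--\eqref{eq:aux_multi_9} force $\zero(\bm{q})\subseteq\scal(\bm{\ell})$, so the supremum may be restricted to $\bm{\ell}$ with $\ell_j=\ell_0-1$ for $j\in\zero(\bm{q})$ and $\ell_j\in[\ell_0-1,L-1]$ for $j\in\supp(\bm{q})$. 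On such $\bm{\ell}$, combining \eqref{eq:aux_multi_4}--\eqref{eq:aux_multi_6}, using $(a+b+c)^2\lesssim a^2+b^2+c^2$, squaring, and dividing by $\|\psi^{\ani}_{\bm{\ell},\bm{k}}\|_{H^1}^2\|\xi_{\bm{q}}\|_{H^1}^2\sim 2^{2\|\bm{\ell}\|_\infty}\|\bm{q}\|_2^2$, I obtain the master estimate
\[
|a(\widehat{\psi}^{\ani}_{\bm{\ell},\bm{k}},\widehat{\xi}_{\bm{q}})|^2\lesssim 2^{-(\Dim-\|\bm{q}\|_0)\ell_0}\Big(|\eta|^2+\tfrac{\|\bm{\beta}\|_2^2}{\|\bm{q}\|_2^2}+\tfrac{|\rho|^2}{\|\bm{q}\|_2^4}\Big)\,\|\bm{q}\|_2^2\,|\widehat{\bm{q}}|^{2\widehat{\gammavec}-\bm{4}}\,2^{(3-2\widehat{\gammavec})\cdot\widehat{\bm{\ell}}-2\|\bm{\ell}\|_\infty}.
\]

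For the first arm $2^{(3\|\bm{q}\|_0-2)L}\|\bm{q}\|_2^2/|\widehat{\bm{q}}|^{\bm{4}}$, I take $\widehat{\gammavec}=\bm{0}$ and push $\widehat{\ell}_j=L-1$ for all $j\in\supp(\bm{q})$, so that $\|\bm{\ell}\|_\infty=L-1$ and the geometric factor becomes $2^{(3\|\bm{q}\|_0-2)(L-1)}|\widehat{\bm{q}}|^{-\bm{4}}$. For the sharper-in-$|\widehat{\bm{q}}|$ second arm $\|\bm{q}\|_2^2/(\|\bm{q}\|_\infty^2|\widehat{\bm{q}}|^{\bm{1}})$, I pick an index $j_*\in\supp(\bm{q})$ with $|q_{j_*}|=\|\bm{q}\|_\infty$ and set $\widehat{\gamma}_{j_*}=1/2$, $\widehat{\gamma}_j=3/2$ for the remaining $j\in\supp(\bm{q})$. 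This produces $|\widehat{\bm{q}}|^{2\widehat{\gammavec}-\bm{4}}=|q_{j_*}|^{-3}\prod_{j\neq j_*}|q_j|^{-1}=\|\bm{q}\|_\infty^{-2}|\widehat{\bm{q}}|^{-\bm{1}}$ and collapses the $\bm{\ell}$-factor to $2^{2\widehat{\ell}_{j_*}-2\|\bm{\ell}\|_\infty}\leq 1$, which is saturated by taking all $\widehat{\ell}_j=\ell_0-1$. Taking the minimum of the two bounds gives \eqref{eq:mu_UB_multid}.

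The main obstacle is this second, non-uniform choice of $\widehat{\gammavec}$: a uniform choice such as $\widehat{\gammavec}=\bm{3/2}$ yields only the weaker bound $\|\bm{q}\|_2^2/|\widehat{\bm{q}}|^{\bm{1}}$, missing the $\|\bm{q}\|_\infty^{-2}$ gain. The trick is to downweight $\gamma_{j_*}$ by a single unit (from $3/2$ to $1/2$) precisely on the component where $|q_j|$ attains its maximum, which trades an additional decay factor $|q_{j_*}|^{-2}=\|\bm{q}\|_\infty^{-2}$ in $|\widehat{\bm{q}}|^{2\widehat{\gammavec}-\bm{4}}$ against a controllable growth $2^{2\widehat{\ell}_{j_*}}$ in the geometric factor that is always dominated by $2^{-2\|\bm{\ell}\|_\infty}$ since $\widehat{\ell}_{j_*}\leq\|\bm{\ell}\|_\infty$. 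Once this observation is in hand, the rest of the argument is routine bookkeeping of exponents and the $H^1$-normalization.
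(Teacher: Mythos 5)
Your proposal is correct and follows essentially the same route as the paper's proof: the case $\bm{q}=\bm{0}$ via \eqref{eq:aux_multi_1}--\eqref{eq:aux_multi_3}, the reduction to $\zero(\bm{q})\subseteq\scal(\bm{\ell})$, and the two arms of the minimum obtained from Lemma~\ref{lem:aux_ineq_multid_ani} by choosing $\widehat{\gammavec}=\bm{0}$ and the non-uniform $\widehat{\gammavec}$ with value $1/2$ at the index attaining $\|\bm{q}\|_\infty$ and $3/2$ elsewhere, which is exactly the choice \eqref{eq:defgammatilde} in the paper. The only (harmless) differences are that you treat all sparsity patterns of $\bm{q}$ at once through the hatted quantities rather than doing $\|\bm{q}\|_0=\Dim$ first, and you replace the paper's permutation-group computation \eqref{eq:truccopermutaz} by the direct observation $2\widehat{\ell}_{j_*}-2\|\bm{\ell}\|_\infty\leq 0$, which is a mild simplification of the same estimate.
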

\begin{proof}
Let us assume $\bm{q} = \bm{0}$. Recalling \eqref{eq:aux_multi_1} and \eqref{eq:aux_multi_2}, we have 
$$
|a(\psi_{\bm{\ell},\bm{k}}^\ani,\xi_{\bm{0}})|
\leq 
|\eta|\underbrace{|(\nabla\psi_{\bm{\ell},\bm{k}}^\ani,\nabla\xi_{\bm{0}})|}_{=0}
+
\underbrace{|(\bm{\beta}\cdot\nabla\psi_{\bm{\ell},\bm{k}}^\ani,\xi_{\bm{0}})|}_{=0}
+
|\rho||(\psi_{\bm{\ell},\bm{k}}^\ani,\xi_{\bm{0}})|.
$$
Employing \eqref{eq:aux_multi_3} and recalling \eqref{eq:normpsij_ani_1} and  \eqref{eq:normxiqmulti-d}, we obtain
$$
|a(\widehat\psi_{\bm{\ell},\bm{k}}^\ani,\widehat\xi_{\bm{0}})|^2
\leq 
\begin{cases}
|\rho|^2 2^{-\Dim\ell_0 } / 2^{2\ell_0} = 2^{-(2 + \Dim)\ell_0}|\rho|^2,  & \text{if } \scal(\bm{\ell}) = [\Dim],\\
0, & \text{otherwise,}
\end{cases}
$$
which, in turn, implies \eqref{eq:mu_UB_multid_q0}.

When $\bm{q} \neq \bm{0}$, we consider the cases $\|\bm{q}\|_0 = \Dim$ and $\|\bm{q}\|_0 < \Dim$.

\paragraph{Case  $\|\bm{q}\|_0 =\Dim$.} Employing the auxiliary inequalities \eqref{eq:aux_multi_4}--\eqref{eq:aux_multi_6} (notice that in this case $\zero(\bm{q}) = \emptyset \subseteq \scal(\bm{\ell})$) and recalling relations \eqref{eq:normpsij_ani_1} and  \eqref{eq:normxiqmulti-d} on the $H^1(\mathcal{D})$-norm of $\psi^{\ani}_{\bm{\ell},\bm{k}}$ and of $\xi_{\bm{q}}$, for every $\gammavec \in [0,2]^\Dim$ and $\bm{q} \neq \bm{0}$, we have 
\begin{equation}
\label{eq:rationormaliz}
|a(\widehat\psi^{\ani}_{\bm{\ell},\bm{k}},\widehat\xi_{\bm{q}})|
= \frac{|a(\psi^{\ani}_{\bm{\ell},\bm{k}},\xi_{\bm{q}})|}{\|\psi^{\ani}_{\bm{\ell},\bm{k}}\|_{H^1(\mathcal{D})}\|\xi_{\bm{q}}\|_{H^1(\mathcal{D})}}
\lesssim
2^{(\frac32- \gammavec )\cdot \bm{\ell} - \|\bm{\ell}\|_\infty}
|\bm{q}|^{\gammavec -\bm{2}}
\|\bm{q}\|_2
\bigg(
|\eta| +  \frac{\|\bm{\beta}\|_2}{\|\bm{q}\|_2} + \frac{|\rho|}{\|\bm{q}\|_2^2}
\bigg).
\end{equation}
For any fixed  $\bm{q}\in \ZZ^\Dim\setminus\{\bm{0}\}$, we  make two different choices of $\gammavec = \gammavec(\bm{q})$ that, in turn, generate two upper bounds to the local $a$-coherence. 

We start by choosing $\gammavec = \bm{0}$, in order to let the upper bound decay as fast as possible in $\bm{q}$. Squaring \eqref{eq:rationormaliz} and considering the maximum over $\bm{j} \in \mathcal{J}$, we obtain
\begin{equation}
\mu_\bm{q} 
\lesssim 
\bigg(|\eta|^2 +  \frac{\|\bm{\beta} \|_2^2}{\|\bm{q}\|_2^2} + \frac{|\rho|^2}{\|\bm{q}\|_2^4}
\bigg)
\frac{\|\bm{q}\|_2^2}{|\bm{q}|^{\mathbf{4}}}
\max_{\ell_0 \leq \bm{\ell} < L} 
2^{3 \|\bm{\ell}\|_1 - 2\|\bm{\ell}\|_\infty}.
\end{equation}
Finally, by noticing that
\begin{equation}
\max_{\ell_0 \leq \bm{\ell} < L} 
2^{3 \|\bm{\ell}\|_1 - 2\|\bm{\ell}\|_\infty}
\leq
\max_{\ell_0 \leq \bm{\ell} < L} 
2^{(3 \Dim-2)\|\bm{\ell}\|_\infty}
\leq
2^{(3 \Dim-2)L},
\end{equation}
we obtain the upper bound corresponding to the first argument of the minimum in \eqref{eq:mu_UB_multid}, for $\|\bm{q}\|_0 = \Dim$.

Now, we find a second upper bound to $\mu_\bm{q}$ that is independent of $L$, but decays more slowly with respect to $\bm{q}$. Consider an index $j_{\infty} \in [\Dim]$ such that $|q_{j_\infty}| = \|\bm{q}\|_\infty$. We define $\widetilde\gammavec = \widetilde\gammavec(\bm{q})$ componentwise as 
\begin{equation}
\label{eq:defgammatilde}
\widetilde\gamma_j := 
\begin{cases}
1/2 & \text{if } j = j_\infty, \\
3/2 & \text{otherwise}, 
\end{cases}
\end{equation}
and choose $\bm{\gamma} = \widetilde {\bm{\gamma}}$. Considering  the set $\triangle:=\{\bm{\ell} \in \mathbb{N}^\Dim : \ell_0 \leq \bm{\ell} < L, \, \ell_1 \geq \ell_2 \geq \cdots \geq \ell_\Dim\}$  and defining $S_\Dim$ as the permutation group of the set $[\Dim]$, we have
\begin{align}
\max_{\ell_0 \leq \bm{\ell} < L}
2^{\frac32 \|\bm{\ell}\|_1 - \|\bm{\ell}\|_\infty - \widetilde\gammavec \cdot \bm{\ell}} 
& = \max_{\bm{\ell} \in \triangle} \max_{\sigma \in S_\Dim}
2^{\frac32 \|\sigma(\bm{\ell})\|_1 - \|\sigma(\bm{\ell})\|_\infty - \widetilde\gammavec \cdot \sigma(\bm{\ell})} \\
& = \max_{\bm{\ell} \in \triangle} 
2^{\frac32 \|\bm{\ell}\|_1 - \|\bm{\ell}\|_\infty }
\max_{\sigma \in S_\Dim}
2^{ - \widetilde\gammavec \cdot \sigma(\bm{\ell})}\\
& = \max_{\bm{\ell} \in \triangle} 
2^{\frac32 \|\bm{\ell}\|_1 - \ell_1} 
2^{ - \frac12 \ell_1 - \frac32 \sum_{j >1} \ell_j} = 1, \label{eq:truccopermutaz}
\end{align}
where we have exploited the identity $\{\bm{\ell} \in \mathbb{N}^\Dim : \ell_0 \leq \bm{\ell} < L\} = \bigcup_{\sigma \in S_\Dim} \sigma (\triangle)$ and the fact that $\|\bm{\ell}\|_1$ and $\|\bm{\ell}\|_\infty$ are invariant with respect to permutations of the components of $\bm{\ell}$. Combining \eqref{eq:rationormaliz} with \eqref{eq:truccopermutaz} (with $\bm{\gamma} = \widetilde{\bm{\gamma}}$), and observing that
$$
|\bm{q}|^{2\widetilde\gammavec(\bm{q}) - \bm{4}}
= |q_{j_\infty}|^{-3} \prod_{j\neq j_\infty}|q_j|^{-1}
= \frac{1}{\|\bm{q}\|_{\infty}^2|\bm{q}|^{\bm{1}}},
$$
we obtain 
\begin{equation}
\mu_\bm{q} 
\lesssim 
|\bm{q}|^{2\widetilde\gammavec(\bm{q}) - \bm{4}}
\|\bm{q}\|_2^2 \bigg(|\eta|^2 +  \frac{\|\bm{\beta}\|_2^2}{\|\bm{q}\|_2^2} + \frac{|\rho|^2}{\|\bm{q}\|_2^4}
\bigg)
= 
\frac{ \|\bm{q}\|_2^2}{\|\bm{q}\|_\infty^2|\bm{q}|^{\onevec} }
\bigg(|\eta|^2 +  \frac{\|\bm{\beta}\|_2^2}{\|\bm{q}\|_2^2} + \frac{|\rho|^2}{\|\bm{q}\|_2^4}
\bigg),
\end{equation}
where the right hand side does not depend on $\bm{\ell}$. This relation corresponds to the second argument of the minimum in \eqref{eq:mu_UB_multid} when $\|\bm{q}\|_0 = \Dim$. The case $\|\bm{q}\|_0 =\Dim$ is hence concluded.

\paragraph{Case $0 < \|\bm{q}\|_0 < \Dim$.}  The argument is analogous to the case $\|\bm{q}\|_0 = \Dim$. We just need to replace $[\Dim]$ with $\supp(\bm{q})$, $\Dim$ with $\|\bm{q}\|_0$, $\bm{q}$ with $\widehat{\bm{q}}$, $\bm{\ell}$ with $\widehat{\bm{\ell}}$, and $\bm{\gamma}$ with $\widehat{\bm{\gamma}}$. Moreover, notice that $\|\widehat{\bm{q}}\|_2 = \|\bm{q}\|_2$, $\|\widehat{\bm{q}}\|_\infty = \|\bm{q}\|_\infty$, and $\|\widehat{\bm{\ell}}\|_{\infty} \leq \|\bm{\ell}\|_{\infty}$ (note that the last relation is not an equality since $\widehat{\bm{\ell}} = \bm{\ell}|_{\supp(\bm{q})}$). This concludes the proof.
\end{proof}

\begin{rmrk}[Consistency with the 1D case.]
The upper bounds of Theorem~\ref{thm:mu_AReq1D} are compatible with those in Theorem~\ref{thm:mu_bound_multi_ani}. Indeed, when the coefficients are constant, they all belong to $H^1_\per(\mathcal{D})$ and it is immediate to verify that \eqref{eq:mu_est_1D_2} coincides with \eqref{eq:mu_UB_multid} when $\Dim = 1$. Moreover, \eqref{eq:mu_UB_multid_q0} with $n=1$ yields $\mu_0 \lesssim |\rho|^2 2^{-3\ell_0}$, which is sharper than the upper bound $\mu_0 \lesssim |\rho|^2 2^{-2\ell_0}$ implied by Theorem~\ref{thm:mu_AReq1D} when the coefficients are constant. This small discrepancy is due to the upper bound $|(\rho\psi_{\ell,k},\xi_0)|\leq \|\rho\|_{L^2(\mathcal{D})}\|\psi_{\ell,k}\|_{L^2(\mathcal{D})}$ employed in the proof of Theorem~\ref{thm:mu_AReq1D}, which is not sharp when $\rho$ is constant. \hfill $\blacksquare$
\end{rmrk}

The local $a$-coherence estimates of Theorem~\ref{thm:mu_bound_multi_ani} answer issues (i), (ii), and (iii) in Section~\ref{sec:i,ii,iii} and lead to the following

\begin{thm}[{\corsing \WF recovery, anisotropic wavelets}] 
\label{thm:CORSING_rec_multi_ani}
In Setting~\ref{ass:Bsplwave} and under the same assumptions as in Theorem~\ref{thm:CORSINGrecovery}, let $\Dim > 1$ and $\eta, \rho \in \mathbb{R}$ and $\bm{\beta} \in \mathbb{R}^\Dim$. Then, provided 
\begin{itemize}
\item[\textup{(i)}] $R  \sim C s N^{3-\frac{2}{\Dim}}$,
\item[\textup{(ii)}] $m  \gtrsim C 2^{\Dim\ell_0} s  (s \ln(eN/(2s)) + \ln(2 s/\varepsilon)) (\ln N + \ln s +\ln C)^\Dim$,
\item[\textup{(iii)}] $p_{\bm{q}}  \propto \begin{cases}
2^{-(2 + \Dim)\ell_0}, & \bm{q} = \bm{0},\\
2^{-(\Dim-\|\bm{q}\|_0)\ell_0} \min\Big\{\frac{2^{(3\|\bm{q}\|_0 - 2)L}\|\bm{q}\|_2^2}{|\widehat{\bm{q}}|^{\bm{4}}}, \frac{\|\bm{q}\|_2^2}{\|\bm{q}\|_\infty^2 |\widehat{\bm{q}}|^{\bm{1}}}\Big\}, & \bm{q} \neq \bm{0},
\end{cases}$
\end{itemize}
where $N = 2^{nL}$ and $C = |\eta|^2 + \|\bm{\beta}\|_2^2 + |\rho|^2$, the \corsing \WF method with $\Psi = \Psi^\ani$ recovers the best $s$-term approximation to $u$ in expectation in the sense of estimate \eqref{eq:corsingrecovery}.
\end{thm}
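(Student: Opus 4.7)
The plan is to specialize Theorem~\ref{thm:CORSINGrecovery} to this setting, mirroring the argument of Theorem~\ref{thm:CORSING_rec_1D} but taking the coherence envelope from Theorem~\ref{thm:mu_bound_multi_ani}. Setting $C := |\eta|^2 + \|\bm{\beta}\|_2^2 + |\rho|^2$, I would define
\[
\nu_{\bm{q}} :=
\begin{cases}
C\, 2^{-(2+n)\ell_0}, & \bm{q} = \bm{0},\\[2pt]
C\, 2^{-(n-\|\bm{q}\|_0)\ell_0} \min\!\left\{\dfrac{2^{(3\|\bm{q}\|_0-2)L}\|\bm{q}\|_2^2}{|\widehat{\bm{q}}|^{\bm{4}}},\; \dfrac{\|\bm{q}\|_2^2}{\|\bm{q}\|_\infty^2\, |\widehat{\bm{q}}|^{\bm{1}}}\right\}, & \bm{q} \neq \bm{0},
\end{cases}
\]
so that $\bm{\mu} \lesssim \bm{\nu}$ is immediate from \eqref{eq:mu_UB_multid_q0}--\eqref{eq:mu_UB_multid}. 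Item (iii) is then just \eqref{eq:defp} applied to this $\bm{\nu}$. The remaining task is to identify scalings of $R$ and $m$ that satisfy the truncation condition \eqref{eq:trunc_cond} and the sampling lower bound \eqref{eq:m_rate}.

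For the truncation condition, I would stratify the sum over $\mathcal{Q}^c$ according to the support size $k := \|\bm{q}\|_0$ and use the \emph{first} argument of the minimum, writing
\[
\nu_{\bm{q}} \lesssim C\, 2^{-(n-k)\ell_0}\, 2^{(3k-2)L}\, \sum_{i \in \supp(\bm{q})}\frac{1}{\widehat{q}_i^{\,2} \prod_{j \neq i}\widehat{q}_j^{\,4}}.
\]
Since $\bm{q} \in \mathcal{Q}^c$ requires $|q_l| > R/2$ for some $l$, the elementary tail estimates $\sum_{|q|>R/2}q^{-2} \sim 1/R$ and $\sum_{|q|>R/2}q^{-4} \sim 1/R^{3}$, combined with $\sum_{q \neq 0}q^{-2},\sum_{q \neq 0}q^{-4} = \BigO(1)$ on the remaining coordinates, yield $\sum_{\bm{q} \in \mathcal{Q}^c,\,\|\bm{q}\|_0 = k} \|\bm{q}\|_2^2/|\widehat{\bm{q}}|^{\bm{4}} \lesssim \binom{n}{k}\,k/R$. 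Among the strata the $k=n$ slice dominates, because the weight $2^{(3k-2)L}$ grows exponentially in $k$; this produces $\|\bm{\nu}|_{\mathcal{Q}^c}\|_1 \lesssim C\, 2^{(3n-2)L}/R = C N^{3-2/n}/R$. Enforcing $s \|\bm{\nu}|_{\mathcal{Q}^c}\|_1 \lesssim 1$ then forces $R \sim C s N^{3-2/n}$, which is item~(i).

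To derive item~(ii) I would instead invoke the \emph{second} argument of the minimum, $\nu_{\bm{q}} \lesssim C\, 2^{-(n-k)\ell_0}\, \|\bm{q}\|_2^2/(\|\bm{q}\|_\infty^2 |\widehat{\bm{q}}|^{\bm{1}})$. Using $\|\bm{q}\|_2^2/\|\bm{q}\|_\infty^2 \leq n$ and the harmonic estimate $\sum_{1 \leq |q| \leq R/2}|q|^{-1} \lesssim \ln R$ one coordinate at a time, the stratum of support size $k$ contributes at most $C \binom{n}{k} 2^{-(n-k)\ell_0}(\ln R)^k$, giving
\[
\|\bm{\nu}|_\mathcal{Q}\|_1 \lesssim C \sum_{k=0}^{n}\binom{n}{k} 2^{-(n-k)\ell_0}(\ln R)^{k} \lesssim C\, 2^{n\ell_0}(\ln R)^n.
\]
Substituting $R \sim C s N^{3-2/n}$ yields $\ln R \lesssim \ln N + \ln s + \ln C$, and plugging this into \eqref{eq:m_rate} produces the sampling complexity in item~(ii). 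The recovery estimate \eqref{eq:corsingrecovery} follows by invoking Theorem~\ref{thm:CORSINGrecovery}.

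The main obstacle is the stratified tail analysis: one must control the minimum in $\nu_{\bm{q}}$ uniformly across strata of different support sizes and verify that the $\BigO(1/R)$ tail combined with the exponential factor $2^{(3k-2)L}$ really makes $k=n$ dominant in $\|\bm{\nu}|_{\mathcal{Q}^c}\|_1$, while the logarithmic factor $(\ln R)^n$ dominates in $\|\bm{\nu}|_\mathcal{Q}\|_1$. Once these two summations are performed, the combinatorial bookkeeping of the $\binom{n}{k}\, 2^{-(n-k)\ell_0}$ weights and the final application of Theorem~\ref{thm:CORSINGrecovery} are routine.
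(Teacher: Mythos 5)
Your proposal is correct and follows essentially the same route as the paper's proof: the same choice of $\bm{\nu}$ from Theorem~\ref{thm:mu_bound_multi_ani}, stratification of both sums by the support size $\|\bm{q}\|_0$, the first argument of the minimum with the $O(1/Q)$ tail estimate for the truncation condition (the paper makes the union over the large coordinate and sign patterns explicit via the sets $X_k^{\bm{t}}$, but the content is identical), and the second argument with the harmonic sums $(\ln R)^k$ for $\|\bm{\nu}|_{\mathcal{Q}}\|_1$. The only cosmetic difference is that the paper sums the binomial series $\sum_{s}\binom{\Dim}{s}2^{(\ell_0+3L)s}$ exactly rather than asserting dominance of the top stratum, which yields the same $N^{3-2/\Dim}$ and $(\ln R)^{\Dim}$ scalings.
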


\begin{proof}
Let us consider the upper bound $\bm{\nu}$ defined according to   \eqref{eq:mu_UB_multid_q0} and \eqref{eq:mu_UB_multid}. The definition of $p_{\bm{q}}$ in (iii) directly follows from the definition of $\nu_{\bm{q}}$. Now, we derive condition (i) by estimating the tail $\|\bm{\nu}|_{\mathcal{Q}^c}\|_1$ and using the upper bound corresponding to the first argument of the minimum in \eqref{eq:mu_UB_multid}. Letting $Q := \lfloor R/2 \rfloor$ and splitting the sum involved in the 1-norm with respect to the  sparsity levels of $\bm{q}$, we obtain 
\begin{align}
\|\bm{\nu}|_{\mathcal{Q}^c}\|_1 
& \leq \sum_{\bm{q} \in \mathbb{Z}^\Dim :\|\bm{q}\|_\infty > Q} \nu_\bm{q}
 = 
\sum_{s = 1}^\Dim  
\;
\sum_{\bm{q} \in \mathbb{Z}^\Dim : \|\bm{q}\|_\infty > Q, \; \|\bm{q}\|_0 = s} 
\nu_\bm{q} \\
& \leq 
C \sum_{s = 1}^\Dim  
\;
\sum_{\bm{q} \in \mathbb{Z}^\Dim : \|\bm{q}\|_\infty > Q, \; \|\bm{q}\|_0 = s} 
2^{-(\Dim-s)\ell_0} \frac{2^{(3s-2)L}\|\bm{q}\|_2^2}{|\widehat{\bm{q}}|^{\bm{4}}} \\
& \label{eq:trunc_cond_multid_proof}
= 
C \sum_{s = 1}^\Dim  {{\Dim}\choose{s}} 2^{-(\Dim-s)\ell_0 + (3s-2)L}
\underbrace{\sum_{\bm{r} \in \mathbb{Z}^s :\|\bm{r}\|_\infty > Q, \,  \|\bm{r}\|_0 = s} \frac{\|\bm{r}\|_2^2}{|\bm{r}|^\bm{4}}}_{=:T(s)}.
\end{align}
Now, we analyze $T(s)$. It is easy to verify that, for every $s \in [\Dim]$, it holds
\begin{equation}
\label{eq:defXkt}
\{\bm{r} \in \ZZ^s: \|\bm{r}\|_\infty > Q, \; \|\bm{r}\|_0 = s\} 
= 
\bigcup_{k = 1}^s \; \bigcup_{\bm{t}\in\{ -1,1\}^s} 
\underbrace{\{\bm{r} \in \ZZ^s: |r_k| > Q, \; \sign(\bm{r}) = \bm{t} \}}_{=:X_k^{\bm{t}} },
\end{equation}
where $X_k^{\bm{t}}\subseteq \mathbb{Z}^s$ is the set of multi-indices having the $k^{\text{th}}$ component larger than $Q$ and with sign pattern $\bm{t}$ (see Figure~\ref{fig:setXprooflocalacohe} for $s = 2$ and $Q=2$).
\begin{figure}
\centering
\includegraphics[width = 8cm]{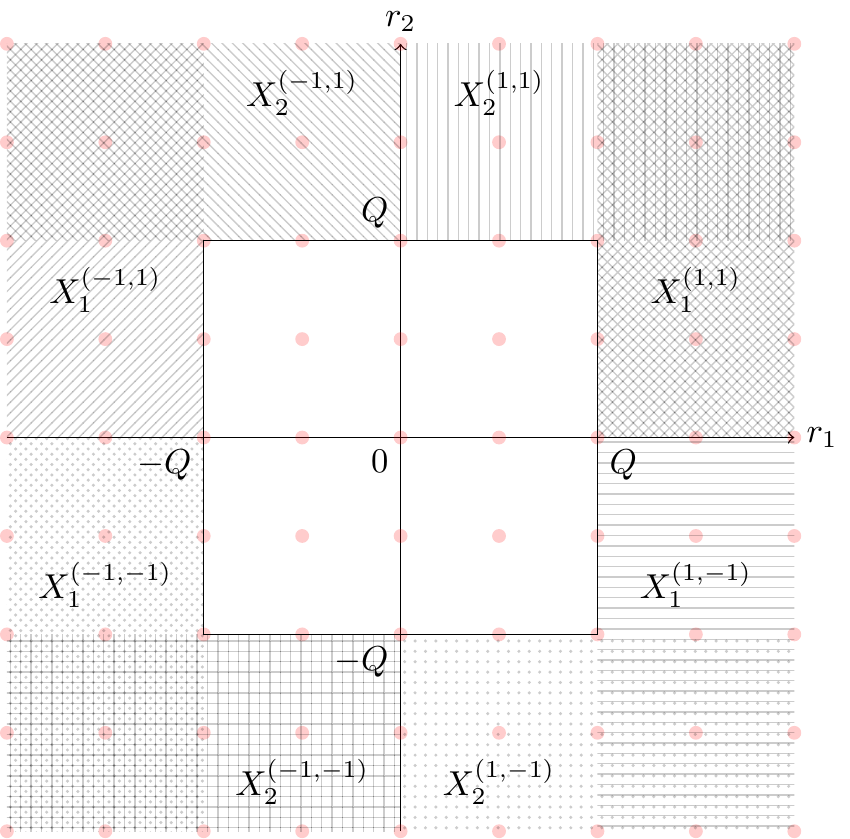}
\caption{\label{fig:setXprooflocalacohe}The sets $X_k^{\bm{t}}\subseteq\mathbb{Z}^s$  in \eqref{eq:defXkt} for $s = 2$ and $Q=2$, restricted to $[-4,4]^2$. Different textures correspond to different sets.}
\end{figure}
Since the function $\bm{r} \mapsto \|\bm{r}\|_2^2/|\bm{r}|^\bm{4}$ is invariant with respect to $\sign(\bm{r})$ and to permutations of the components of $\bm{r}$, we can just consider the set $X_1^{\onevec}$. Moreover, the number of possible sets $X_k^\bm{t}$ in $\ZZ^s$ is $2^s s$. Therefore, we estimate
\begin{align}
T(s) & \leq 2^s s \sum_{\bm{r} \in X_1^{\onevec}} \frac{\|\bm{r}\|_2^2}{|\bm{r}|^4} 
 = 2^s s \sum_{r_1 > Q} 
\sum_{r_2 > 0} \cdots \sum_{r_s > 0}
\sum_{k=1}^s \frac{1}{|\bm{r}|^2}\prod_{i \neq k}\frac{1}{r_i^2}\\
& = 2^s s
\bigg(
\bigg(\sum_{r_1 >Q} \frac{1}{r_1^2} \bigg)
\prod_{i \neq 1}\bigg(\sum_{r_i>0}\frac{1}{r_i^4}\bigg)
+
\sum_{k = 2}^s 
\bigg(\sum_{r_1>Q}\frac{1}{r_1^4}\bigg)
\bigg(\sum_{r_k>0}\frac{1}{r_k^2}\bigg)
\prod_{i \notin\{1,k\}}\bigg(\sum_{r_i>0}\frac{1}{r_i^4}\bigg)
\bigg)\\
& \lesssim 2^s s 
\bigg(\frac{c^{s-1}}{Q} + s \frac{c^{s-2}}{Q^3}
\bigg)
\lesssim \frac{1}{Q},
\end{align}
where $c := \sum_{k \in \mathbb{N}}1/k^4 < \infty$  and where the last inequality involves a constant depending exponentially on $s$ and hence on $\Dim$ (since $s \leq n$). Plugging the  estimate for $T(s)$ above into \eqref{eq:trunc_cond_multid_proof} yields
\begin{align}
\|\bm{\nu}|_{\mathcal{Q}^c}\|_1
& \lesssim 
\frac{C}{Q}
\sum_{s = 1}^\Dim  {{\Dim}\choose{s}} 2^{-(\Dim-s)\ell_0 + (3s-2)L} 
 = C\frac{2^{-\Dim\ell_0-2L}}{\lfloor R/2 \rfloor}
\sum_{s = 1}^\Dim  {{\Dim}\choose{s}} 2^{(\ell_0 + 3L)s} \\
& \lesssim C\frac{2^{-\Dim\ell_0-2L}}{R} 
2^{\Dim(\ell_0 + 3L)} 
=
C\frac{2^{\Dim L (3 -\frac2\Dim)}}{R}.
\end{align}
Condition (i) is obtained from $s\|\bm{\nu}|_{\mathcal{Q}^c}\|_1 \lesssim 1$. Of course, the above inequality also  shows implicitly that $\|\bm{\nu}\|_1 < +\infty$.

Finally, in order to prove (ii), we estimate $\|\bm{\nu}|_{\mathcal{Q}}\|_1$. Employing the upper bound corresponding to \eqref{eq:mu_UB_multid_q0} and to the second argument of the minimum in \eqref{eq:mu_UB_multid}, and recalling that  $\|\bm{q}\|_2^2 \leq \|\bm{q}\|_0 \|\bm{q}\|_\infty^2$ for every $\bm{q} \in \mathbb{Z}^\Dim$, we obtain
\begin{align}
\|\bm{\nu}|_{\mathcal{Q}}\|_1 
& \leq 
C \bigg( 2^{-(2+\Dim)\ell_0} + \sum_{s = 1}^\Dim 2^{-(\Dim-s)\ell_0}
\sum_{\bm{q} \in \mathbb{Z}^\Dim : \|\bm{q}\|_{\infty}\leq Q, \; \|\bm{q}\|_0 = s} \frac{\|\bm{q}\|_2^2}{\|\bm{q}\|_\infty^2 |\widehat{\bm{q}}|^{\onevec}}\bigg)\\
& \lesssim
C 
\sum_{s = 1}^\Dim 2^{-(\Dim-s)\ell_0} 
\sum_{\bm{q} \in \mathbb{Z}^\Dim :\|\bm{q}\|_{\infty}\leq Q, \; \|\bm{q}\|_0 = s} \frac{s}{|\widehat{\bm{q}}|^{\onevec}}
 =
C 
\sum_{s = 1}^\Dim {{\Dim}\choose{s}} 2^{-(\Dim-s)\ell_0} s
\sum_{\bm{r}\in\ZZ^s: \|\bm{r}\|_\infty\leq Q, \; \|\bm{r}\|_0 = s  } \frac{1}{|\bm{r}|^{\onevec}}\\
&  = 
C
\sum_{s = 1}^\Dim{{\Dim}\choose{s}} 2^{-(\Dim-s)\ell_0} s
\bigg(\sum_{|q| \leq Q, \; q \neq 0} \frac{1}{|q|}\bigg)^s 
\lesssim
2^{-n\ell_0} C \sum_{s = 1}^\Dim{{\Dim}\choose{s}} 2^{s\ell_0} s
(\log R)^s 
\lesssim C (\log R)^\Dim,
\end{align}
which proves the theorem.
\end{proof}

\subsubsection{Isotropic tensor product wavelets}
\label{sec:iso}

As in the previous sections, we provide local $a$-coherence upper bounds  (Theorem~\ref{thm:mu_bound_multi_iso}) and a recovery result (Theorem~\ref{thm:CORSING_rec_multi_iso}) for the \corsing \WF method.

We start by proving auxiliary inequalities analogous to those in Lemma~\ref{lem:aux_ineq_multid_ani}. We skip the proof, which follows the same arguments as in Lemma~\ref{lem:aux_ineq_multid_ani}.
\begin{lem}[Auxiliary inequalities, isotropic wavelets]
\label{lem:aux_ineq_multid_iso}
In Setting~\ref{ass:Bsplwave}, let $n > 1$, $\ell \in \mathbb{N}$, with $\ell \geq \ell_0$, $\bm{k} \in (\mathbb{Z}/(2^\ell\mathbb{Z}))^\Dim$, $\bm{e} \in \{0,1\}^\Dim$, and $\bm{q} \in \ZZ^\Dim$. Moreover, define $\zero(\bm{q})$ as in \eqref{eq:def_zero_scal}. Then, the following inequalities hold:
\begin{itemize}
\item If $\bm{q} = \bm{0}$, we have
\begin{align}
\label{eq:aux_multi_iso_1}
|(\nabla \psi^{\iso}_{\ell,\bm{k},\bm{e}}, \nabla \xi_{\bm{0}})| & = 0,\\
\label{eq:aux_multi_iso_2}
|(\bm{\beta} \cdot \nabla \psi^{\iso}_{\ell,\bm{k},\bm{e}}, \xi_{\bm{0}})| & = 0, \quad \forall \bm{\beta} \in \mathbb{R}^\Dim\\
\label{eq:aux_multi_iso_3}
|(\psi^{\iso}_{\ell,\bm{k},\bm{e}}, \xi_{\bm{0}})| & = 
\begin{cases}
2^{- \Dim\ell / 2}, & \text{if } \bm{e} = \bm{0},\\
0, & \text{if } \bm{e} \neq \bm{0};
\end{cases}
\end{align}
\item if $\bm{q} \neq \bm{0}$ and $\zero(\bm{q}) \subseteq \zero(\bm{e})$, then, for every $\gammavec \in [0,2]^{\|\bm{q}\|_0}$, it holds
\begin{align}
|(\nabla \psi^{\iso}_{\ell,\bm{k},\bm{e}},\nabla \xi_{\bm{q}})| 
& \lesssim
2^{(-\frac \Dim 2 + 2 \|\bm{q}\|_0 - \|\widehat{\bm{\gamma}}\|_1)\ell} 
|\widehat{\bm{q}}|^{\widehat\gammavec -\bm{2}}
\|\bm{q}\|_2^2, \label{eq:aux_multi_iso_4}\\
|(\bm{\beta} \cdot \nabla \psi^{\iso}_{\ell,\bm{k},\bm{e}},\xi_{\bm{q}})| 
& \lesssim 
2^{(-\frac \Dim 2 + 2 \|\bm{q}\|_0 - \|\widehat{\bm{\gamma}}\|_1)\ell}
|\widehat{\bm{q}}|^{\widehat\gammavec -\bm{2}} \|\bm{\beta}\|_2 \|\bm{q}\|_2, 
\quad \forall \bm{\beta} \in \mathbb{R}^\Dim,\label{eq:aux_multi_iso_5} \\
|(\psi^{\iso}_{\ell,\bm{k},\bm{e}},\xi_{\bm{q}})| 
& \lesssim 
2^{(-\frac \Dim 2 + 2 \|\bm{q}\|_0 - \|\widehat{\bm{\gamma}}\|_1)\ell}
|\widehat{\bm{q}}|^{\widehat\gammavec -\bm{2}},\label{eq:aux_multi_iso_6}
\end{align}
where $\widehat{\bm{x}} := \bm{x}|_{\supp(\bm{q})} \in \mathbb{R}^{\|\bm{q}\|_0}$ for every $\bm{x} \in \mathbb{R}^\Dim$ and the above inequalities hide constants depending exponentially on $\Dim$;

\item if $\bm{q} \neq \bm{0}$ and $\zero(\bm{q}) \not\subseteq \zero(\bm{e})$, it holds 
\begin{align}
\label{eq:aux_multi_iso_7}
|(\nabla \psi^{\iso}_{\bm{\ell},\bm{k}}, \nabla \xi_{\bm{q}})| & = 0,\\
\label{eq:aux_multi_iso_8}
|(\bm{\beta} \cdot \nabla \psi^{\iso}_{\bm{\ell},\bm{k}}, \xi_{\bm{q}})| & = 0, \quad \forall \bm{\beta} \in \mathbb{R}^\Dim\\
\label{eq:aux_multi_iso_9}
|(\psi^{\iso}_{\bm{\ell},\bm{k}}, \xi_{\bm{q}})| & = 0.
\end{align}
\end{itemize}

\end{lem}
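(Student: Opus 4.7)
The plan is to adapt the proof of Lemma~\ref{lem:aux_ineq_multid_ani} to the isotropic setting. The key structural difference is that all components of $\psi^{\iso}_{\ell,\bm{k},\bm{e}}$ share the same dyadic level $\ell$ (as opposed to the componentwise levels $\ell_j$ of the anisotropic case), and each component is individually marked by $\bm{e}$ as either a scaling function ($e_j=0$) or a wavelet function ($e_j=1$). Consequently, the role played by $\scal(\bm{\ell})$ in Lemma~\ref{lem:aux_ineq_multid_ani}, which encoded the \emph{scaling-function indices}, is now played by $\zero(\bm{e})$. All the estimates will emerge by writing tensorized identities analogous to \eqref{eq:diftermtensor}--\eqref{eq:reatermtensor}, plugging in the 1D bounds of Lemma~\ref{lem:1DestADR}, and carefully tracking the exponent of $2^\ell$.

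For $\bm{q}=\bm{0}$, the diffusive and advective contributions vanish because every summand in the tensorized expansion contains a factor $(\vartheta'_{\ell,k_j,e_j},\xi_0')$ or $(\vartheta'_{\ell,k_j,e_j},\xi_0)$, both of which are zero by \eqref{eq:1Destdifq0}--\eqref{eq:1Destadvq0}. For the reaction term, \eqref{eq:1Destreaq0} gives $(\psi_{\ell,k_j},\xi_0)=0$ whenever $e_j=1$, so the product vanishes unless $\bm{e}=\bm{0}$, in which case it equals $\prod_{j=1}^{\Dim}(\varphi_{\ell,k_j},\xi_0)=2^{-\Dim\ell/2}$. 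This proves \eqref{eq:aux_multi_iso_1}--\eqref{eq:aux_multi_iso_3}. An analogous vanishing argument handles the case $\bm{q}\neq\bm{0}$ with $\zero(\bm{q})\not\subseteq\zero(\bm{e})$: any index $j_0\in\zero(\bm{q})\setminus\zero(\bm{e})$ provides a coordinate at which a true wavelet $\psi_{\ell,k_{j_0}}$ is tested against $\xi_0$ or $\xi_0'$, and \eqref{eq:1Destdifq0}--\eqref{eq:1Destreaq0} force every summand to vanish, yielding \eqref{eq:aux_multi_iso_7}--\eqref{eq:aux_multi_iso_9}.

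The main case is $\bm{q}\neq\bm{0}$ with $\zero(\bm{q})\subseteq\zero(\bm{e})$. Here, for each $j\in\zero(\bm{q})$ one has $e_j=0$, so $\vartheta_{\ell,k_j,e_j}=\varphi_{\ell,k_j}$ is tested against $\xi_0$, contributing a factor $|(\varphi_{\ell,k_j},\xi_0)|=2^{-\ell/2}$, and hence a combined factor $2^{-(\Dim-\|\bm{q}\|_0)\ell/2}$. For each $j\in\supp(\bm{q})$ one applies \eqref{eq:1DestADRtermsscal} or \eqref{eq:1DestADRtermswave} (according to $e_j$), producing a factor $2^{(3/2-\gamma_j)\ell}|\pi q_j|^{\gamma_j-2+\alpha_1+\alpha_2}$. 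Summing the exponents of $2$ and factoring out the common $\ell$ yields the coefficient
\begin{equation}
\tfrac{3}{2}\|\bm{q}\|_0-\|\widehat{\bm{\gamma}}\|_1-\tfrac{\Dim-\|\bm{q}\|_0}{2}=-\tfrac{\Dim}{2}+2\|\bm{q}\|_0-\|\widehat{\bm{\gamma}}\|_1,
\end{equation}
which matches the target exponent in \eqref{eq:aux_multi_iso_4}--\eqref{eq:aux_multi_iso_6}. The $|\widehat{\bm{q}}|^{\widehat{\bm{\gamma}}-\bm{2}}$ power and the $\|\bm{q}\|_2^2$, $\|\bm{\beta}\|_2\|\bm{q}\|_2$, and $1$ prefactors for the diffusive, advective, and reactive forms then arise exactly as in the anisotropic proof, by summing over $j\in\supp(\bm{q})$ the term in which the differentiation falls. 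No new analytic idea is required: the only place that calls for care is verifying that the substitution $\scal(\bm{\ell})\leftrightarrow\zero(\bm{e})$ and the collapse of the multi-level $\bm{\ell}$ into a single level $\ell$ propagate consistently through each step, which is routine bookkeeping rather than a genuine obstacle.
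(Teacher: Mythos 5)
Your proposal is correct and follows exactly the route the paper intends: the paper explicitly omits this proof, stating that it ``follows the same arguments as in Lemma~\ref{lem:aux_ineq_multid_ani},'' and your adaptation --- replacing $\scal(\bm{\ell})$ by $\zero(\bm{e})$, collapsing the levels to a single $\ell$, and recomputing the exponent as $\tfrac32\|\bm{q}\|_0-\|\widehat{\bm{\gamma}}\|_1-\tfrac12(\Dim-\|\bm{q}\|_0)=-\tfrac{\Dim}{2}+2\|\bm{q}\|_0-\|\widehat{\bm{\gamma}}\|_1$ --- is precisely that argument, carried out correctly.
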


In the following theorem, we provide upper bounds to the local $a$-coherence for isotropic tensor product wavelets. We only outline a sketch of its proof, which is analogous to that of Theorem~\ref{thm:mu_bound_multi_ani}. 

\begin{thm}[Local $a$-coherence upper bound, $\Dim > 1$, isotropic wavelets]
\label{thm:mu_bound_multi_iso}
In Setting~\ref{ass:Bsplwave}, let $\Dim > 1$. Then,   for every $\eta, \rho \in \mathbb{R}$ and $\bm{\beta} \in \mathbb{R}^\Dim$, the following upper bounds hold:
\begin{align}
\label{eq:mu_UB_multid_q0_iso}
\mu_{\bm{0}} & \lesssim |\rho|^2 2^{-(2+ \Dim)\ell_0},\\
\label{eq:mu_UB_multid_iso}
\mu_{\bm{q}} & 
\lesssim 
\bigg(|\eta|^2 +  \frac{\|\bm{\beta}\|_2^2}{\|\bm{q}\|_2^2} + \frac{|\rho|^2}{\|\bm{q}\|_2^4}
\bigg)   
\min\bigg\{\frac{(1 + 2^{2(-\frac \Dim 2 + 2\|\bm{q}\|_0 - 1)L}) \|\bm{q}\|_2^2}{ |\widehat{\bm{q}}|^{\bm{4}}}, 
\frac{2^{-(n-\|\bm{q}\|_0)\ell_0} \|\bm{q}\|_2^2}{\|\bm{q}\|_\infty^2 |\widehat{\bm{q}}|^{\bm{1}}}\bigg\}, 
\quad \forall \bm{q} \neq \bm{0},
\end{align}
where the inequalities hide constants depending exponentially on $\Dim$.
\end{thm}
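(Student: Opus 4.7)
The plan is to mirror the proof of Theorem~\ref{thm:mu_bound_multi_ani}, replacing the anisotropic auxiliary inequalities by their isotropic analogues in Lemma~\ref{lem:aux_ineq_multid_iso} and exploiting the fact that an isotropic wavelet $\psi^\iso_{\ell,\bm{k},\bm{e}}$ is attached to a single dyadic level, so that $\|\psi^\iso_{\ell,\bm{k},\bm{e}}\|_{H^1(\mathcal{D})}\sim 2^\ell$ by \eqref{eq:normpsij_iso_1}. I would treat $\bm{q}=\bm{0}$ and $\bm{q}\neq\bm{0}$ separately and, in the second case, produce two different upper bounds corresponding to two choices of the free parameter $\widehat{\bm{\gamma}}$, whose minimum is exactly \eqref{eq:mu_UB_multid_iso}.

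For $\bm{q}=\bm{0}$, identities \eqref{eq:aux_multi_iso_1}--\eqref{eq:aux_multi_iso_2} kill the diffusion and advection contributions, while \eqref{eq:aux_multi_iso_3} gives $|(\psi^\iso_{\ell,\bm{k},\bm{0}},\xi_{\bm{0}})|=2^{-\Dim\ell/2}$ when $\bm{e}=\bm{0}$ (and zero otherwise). Normalizing in $H^1(\mathcal{D})$ via $\widehat\psi^\iso=2^{-\ell}\psi^\iso$ and $\|\widehat\xi_{\bm{0}}\|_{H^1(\mathcal{D})}=1$, one obtains $|a(\widehat\psi^\iso_{\ell,\bm{k},\bm{0}},\widehat\xi_{\bm{0}})|^2\lesssim |\rho|^2\,2^{-(2+\Dim)\ell}$, whose supremum over $\ell\geq\ell_0$ is attained at $\ell=\ell_0$, yielding \eqref{eq:mu_UB_multid_q0_iso}.

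For $\bm{q}\neq\bm{0}$, indices $(\ell,\bm{k},\bm{e})$ with $\zero(\bm{q})\not\subseteq\zero(\bm{e})$ contribute zero by \eqref{eq:aux_multi_iso_7}--\eqref{eq:aux_multi_iso_9}. On the remaining indices, combining \eqref{eq:aux_multi_iso_4}--\eqref{eq:aux_multi_iso_6} via the triangle inequality with the normalization factor $1/(2^\ell\|\bm{q}\|_2)$ produces, for every $\widehat{\bm{\gamma}}\in[0,2]^{\|\bm{q}\|_0}$,
\begin{equation}
|a(\widehat\psi^\iso_{\ell,\bm{k},\bm{e}},\widehat\xi_{\bm{q}})|^2 \lesssim 2^{2(2\|\bm{q}\|_0-\|\widehat{\bm{\gamma}}\|_1-1-\Dim/2)\ell}\,|\widehat{\bm{q}}|^{2\widehat{\bm{\gamma}}-\bm{4}}\,\|\bm{q}\|_2^2\bigg(|\eta|^2+\frac{\|\bm{\beta}\|_2^2}{\|\bm{q}\|_2^2}+\frac{|\rho|^2}{\|\bm{q}\|_2^4}\bigg).
\label{eq:iso_sketch_key}
\end{equation}
Setting $\widehat{\bm{\gamma}}=\bm{0}$ gives $|\widehat{\bm{q}}|^{-\bm{4}}$ and an $\ell$-exponent $2(2\|\bm{q}\|_0-1-\Dim/2)\ell$ whose supremum over $\ell_0\leq\ell<L$ is bounded by $1+2^{2(-\Dim/2+2\|\bm{q}\|_0-1)L}$, which furnishes the first argument of the minimum in \eqref{eq:mu_UB_multid_iso}. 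Alternatively, picking $j_\infty\in\supp(\bm{q})$ with $|q_{j_\infty}|=\|\bm{q}\|_\infty$ and setting $\widetilde\gamma_{j_\infty}=1/2$, $\widetilde\gamma_j=3/2$ for $j\in\supp(\bm{q})\setminus\{j_\infty\}$ exactly as in \eqref{eq:defgammatilde}, one finds $\|\widehat{\widetilde{\bm{\gamma}}}\|_1=(3\|\bm{q}\|_0-2)/2$, so the $\ell$-exponent in \eqref{eq:iso_sketch_key} collapses to $(\|\bm{q}\|_0-\Dim)\ell\leq 0$, the supremum over $\ell$ is attained at $\ell=\ell_0$, and $|\widehat{\bm{q}}|^{2\widetilde{\bm{\gamma}}-\bm{4}}=1/(\|\bm{q}\|_\infty^2|\widehat{\bm{q}}|^{\bm{1}})$, which gives the second argument. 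Taking the minimum of the two estimates closes the proof. The main simplification relative to the anisotropic case is that the permutation identity \eqref{eq:truccopermutaz} is not needed, since with a single level the supremum over $\bm{j}\in\mathcal{J}^\iso$ reduces at once to a supremum over $\ell\in[\ell_0,L)$; the only delicate point is the correct bookkeeping of the constant $-1-\Dim/2$ in the $\ell$-exponent, which distinguishes the isotropic from the anisotropic case because all tensor factors share one level.
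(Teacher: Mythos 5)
Your proposal is correct and follows essentially the same route as the paper's own (sketched) proof: the same key display bounding $|a(\widehat\psi^\iso_{\bm{j}},\widehat\xi_{\bm{q}})|^2$ with the free parameter $\widehat{\bm{\gamma}}$, the same two choices $\widehat{\bm{\gamma}}=\bm{0}$ and $\widehat{\bm{\gamma}}=\widehat{\widetilde{\bm{\gamma}}}$ with $\|\widehat{\widetilde{\bm{\gamma}}}\|_1=\tfrac32\|\bm{q}\|_0-1$, and the same maximizations over the single level $\ell$. Your observation that the permutation argument \eqref{eq:truccopermutaz} becomes unnecessary in the isotropic case is accurate and consistent with how the paper handles it.
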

\begin{proof}
The upper bound \eqref{eq:mu_UB_multid_q0_iso} to $\mu_\bm{0}$ is easy to verify. Let us consider $\bm{q} \neq \bm{0}$. Employing the auxiliary inequalities of Lemma~\ref{lem:aux_ineq_multid_iso} and using arguments analogous to those in Theorem~\ref{thm:mu_bound_multi_ani}, we obtain, for every $\bm{\gamma} \in [0,2]^\Dim$,
\begin{equation}
|a(\widehat{\psi}^\iso_\bm{j}, \widehat{\xi}_\bm{q})|^2
\lesssim
\begin{cases}
(|\eta|^2 +  \frac{\|\bm{\beta}\|_2^2}{\|\bm{q}\|_2^2} + \frac{|\rho|^2}{\|\bm{q}\|_2^4}
)2^{2 (-\frac \Dim 2 + 2\|\bm{q}\|_0 - \|\widehat{\bm{\gamma}}\|_1 -1)\ell}
|\widehat{\bm{q}}|^{2\widehat{\bm{\gamma}}-\bm{4}} \|\bm{q}\|_2^2, & \text{if } \zero(\bm{q})  \subseteq \zero(\bm{e}),\\
0 & \text{otherwise}.
\end{cases}
\end{equation}
Analogously to Theorem~\ref{thm:mu_bound_multi_ani}, we obtain the upper bound corresponding to the first argument of the minimum in \eqref{eq:mu_UB_multid_iso} by choosing $\widehat{\bm{\gamma}} = \bm{0}$ in the inequality above. Notice in particular that 
\begin{equation}
\max_{ \ell_0 \leq \ell < L} 2^{2 (-\frac \Dim 2 + 2\|\bm{q}\|_0  -1)\ell}\leq \max\{1, 2^{2 (-\frac \Dim 2 + 2\|\bm{q}\|_0  -1)L}\}
\leq 1 + 2^{2 (-\frac \Dim 2 + 2\|\bm{q}\|_0  -1)L},
\end{equation}
where, in the second expression, we have used that $2^{2(-\frac{n}{2}+\|\bm{q}\|_0-1)\ell}\leq 1$ when ${-\frac{n}{2}+\|\bm{q}\|_0-1}$ is negative.
The upper bound corresponding to the second argument of the minimum in \eqref{eq:mu_UB_multid_iso} is proved by letting $\bm{\gamma} = \widetilde{\bm{\gamma}}$, as in \eqref{eq:defgammatilde}. Note that, in this case, we have  $\|\widehat{\bm{\gamma}}\|_1 = \frac32 \|\bm{q}\|_0 -1$ and, consequently,
$$
\max_{\ell_0 \leq \ell < L} 2^{2(-\frac{n}{2}+2\|\bm{q}\|_0 -\|\widehat{\bm{\gamma}}\|_1-1)\ell}
= \max_{\ell_0 \leq \ell < L} 2^{-(n-\|\bm{q}\|_0)\ell}
\leq 2^{-(n-\|\bm{q}\|_0)\ell_0}.
$$ 
This concludes the proof.
\end{proof}

Finally, we obtain the \corsing \WF recovery theorem for isotropic tensor product wavelets solving issues (i), (ii), and (iii) in Section~\ref{sec:i,ii,iii}. The proof is analogous to that of Theorem~\ref{thm:CORSING_rec_multi_ani} and therefore will  be omitted.

\begin{thm}[{\corsing \WF recovery,  isotropic wavelets}] 
\label{thm:CORSING_rec_multi_iso}
In Setting~\ref{ass:Bsplwave} and under the same assumptions as in Theorem~\ref{thm:CORSINGrecovery}, let $\Dim > 1$ and let $\eta, \rho \in \mathbb{R}$ and $\bm{\beta} \in \mathbb{R}^\Dim$. Then, provided 
\begin{itemize}
\item[\textup{(i)}] $R  \sim C s N^{3-\frac{2}{\Dim}}$,
\item[\textup{(ii)}] $m  \gtrsim C 2^{\Dim\ell_0} s  (s \ln(eN/(2s)) + \ln(2s/\varepsilon)) (\ln N + \ln s +\ln C)^\Dim$,
\item[\textup{(iii)}] $p_{\bm{q}}  \propto \begin{cases}
2^{-(2+n)\ell_0}, & \bm{q} = \bm{0},\\
\min\Big\{\frac{(1+2^{2(-\frac \Dim 2 + 2\|\bm{q}\|_0 - 1)L})\|\bm{q}\|_2^2}{|\widehat{\bm{q}}|^{\bm{4}}}, \frac{2^{-(n-\|\bm{q}\|_0)\ell_0}\|\bm{q}\|_2^2}{\|\bm{q}\|_\infty^2 |\widehat{\bm{q}}|^{\bm{1}}}\Big\}, & \bm{q} \neq \bm{0},
\end{cases}$
\end{itemize}
where $C = |\eta|^2 + \|\bm{\beta}\|_2^2 + |\rho|^2$, the \corsing \WF method with $\Psi = \Psi^{\iso}$ recovers the best $s$-term approximation to $u$ in expectation in the sense of estimate \eqref{eq:corsingrecovery}.
\end{thm}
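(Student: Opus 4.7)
The plan is to follow the same template as the proof of Theorem~\ref{thm:CORSING_rec_multi_ani}, simply replacing the anisotropic local $a$-coherence bound \eqref{eq:mu_UB_multid} with its isotropic counterpart \eqref{eq:mu_UB_multid_iso}. First I would define the upper bound $\bm{\nu}$ to $\bm{\mu}$ by setting $\nu_{\bm{0}}$ equal to the right-hand side of \eqref{eq:mu_UB_multid_q0_iso} and, for $\bm{q}\neq\bm{0}$, $\nu_{\bm{q}}$ equal to the right-hand side of \eqref{eq:mu_UB_multid_iso}. The prescription for the sampling distribution $\bm{p}$ in item (iii) then follows directly from \eqref{eq:defp}.

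Next, I would verify the truncation condition (i) by estimating the tail $\|\bm{\nu}|_{\mathcal{Q}^c}\|_1$ using the first argument of the minimum in \eqref{eq:mu_UB_multid_iso}. Letting $Q := \lfloor R/2\rfloor$ and splitting the sum according to the sparsity level $s = \|\bm{q}\|_0$ of $\bm{q}$, exactly as in \eqref{eq:trunc_cond_multid_proof}, I obtain
\[
\|\bm{\nu}|_{\mathcal{Q}^c}\|_1 \lesssim C \sum_{s=1}^{n}\binom{n}{s}\bigl(1+2^{2(-n/2+2s-1)L}\bigr)\, T(s),
\]
where the combinatorial sum $T(s):=\sum_{\bm{r}\in\mathbb{Z}^s:\|\bm{r}\|_\infty>Q,\,\|\bm{r}\|_0=s}\|\bm{r}\|_2^2/|\bm{r}|^{\bm{4}}$ satisfies $T(s)\lesssim 1/Q$ by the same argument employed in the anisotropic proof. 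Since the exponent $-n/2+2s-1$ is maximized at $s=n$, the dominant contribution comes from $s=n$, producing the factor $2^{(3n-2)L}=N^{3-2/n}$. Imposing $s\|\bm{\nu}|_{\mathcal{Q}^c}\|_1\lesssim 1$ then yields the choice $R\sim CsN^{3-2/n}$.

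For the sampling complexity (ii) I would estimate $\|\bm{\nu}|_{\mathcal{Q}}\|_1$ using the second argument of the minimum in \eqref{eq:mu_UB_multid_iso}, which is $L$-independent. Using $\|\bm{q}\|_2^2\leq \|\bm{q}\|_0\,\|\bm{q}\|_\infty^2$ to cancel the $\|\bm{q}\|_\infty^2$ in the denominator, and splitting the resulting sum over the sparsity $s=\|\bm{q}\|_0$, the inner sum factorizes into one-dimensional sums $\sum_{0<|q|\leq Q}|q|^{-1}\lesssim \log R$. This yields $\|\bm{\nu}|_{\mathcal{Q}}\|_1\lesssim C(\log R)^n\sim C(\ln N+\ln s+\ln C)^n$, and substituting this bound (together with the $2^{n\ell_0}$ coming from the $2^{-(n-\|\bm{q}\|_0)\ell_0}$ weight at $\|\bm{q}\|_0=n$) into \eqref{eq:m_rate} gives the sampling rate in (ii).

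The main technical subtlety I expect is bookkeeping: one must verify that for every sparsity level $s\in[n]$ the term $2^{2(-n/2+2s-1)L}$ in the tail estimate is dominated by the $s=n$ contribution, and track the $\ell_0$-dependence through the factor $2^{-(n-\|\bm{q}\|_0)\ell_0}$ in both sums. Both steps are mild---the former because the exponent is monotone increasing in $s$, the latter because the geometric summation in $s$ is controlled by its largest term---but they need to be handled carefully so as to hide all exponential-in-$n$ constants inside $\lesssim$, consistent with the convention used in Theorem~\ref{thm:mu_bound_multi_iso}.
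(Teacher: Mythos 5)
Your proposal is correct and follows exactly the route the paper intends: the paper omits this proof precisely because it is the verbatim analogue of the proof of Theorem~\ref{thm:CORSING_rec_multi_ani}, with the isotropic bounds \eqref{eq:mu_UB_multid_q0_iso}--\eqref{eq:mu_UB_multid_iso} in place of the anisotropic ones, and your tail estimate (dominant term $2^{(3n-2)L}=N^{3-2/n}$ at $s=n$) and your $L$-independent bound $\|\bm{\nu}|_{\mathcal{Q}}\|_1\lesssim C(\log R)^n$ are exactly what is needed. The only small slip is your attribution of the $2^{n\ell_0}$ factor in (ii) to the weight at $\|\bm{q}\|_0=n$ (where that weight is actually $2^0=1$); this factor is merely a conservative surplus inherited from the anisotropic statement and is harmless since (ii) is a sufficient lower bound on $m$.
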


\section{Numerical assessment}
\label{sec:numerics}

In this section, numerically investigate the reliability and robustness of the \corsing \WF approach in different dimensions. In Section~\ref{sec:1Dnum}, we consider a 1D ADR equation with constant and nonconstant coefficients. As predicted by the theory, we show that \corsing \WF is robust and reliable also in the case of nonconstant coefficients. In Section~\ref{sec:2Dnum}, we consider the 2D case and compare the performance of isotropic and anisotropic wavelets in different case studies. Moreover, it turns out that the nonuniform subsampling strategy based on the local $a$-coherence significantly outperforms uniform random subsampling. Finally, in Section~\ref{sec:3Dnum}, we consider a 3D case.

All the numerical experiments have been performed in \textsc{Matlab$^\text{\textregistered}$} with the aid of OMP-Box for OMP \cite{ompbox,Rubinstein2008}. We have used \textsc{Matlab$^\text{\textregistered}$} R2017b version 9.3 64-bit on a MacBook Pro equipped with a 3 GHz Intel Core i7 processor and with 8 GB DDR3 RAM.

\subsection{1D case with nonconstant diffusion}
\label{sec:1Dnum}

We consider a 1D equation with $\beta = 0$, $\rho = 1$ and let the diffusion coefficient vary. In particular, we consider $\eta(x_1) \equiv 1$ and $\eta(x_1) = 1 +0.5 \sin(6 \pi x_1)$. 

\paragraph{The Petrov-Galerkin stiffness matrix $B$.} We compute the stiffness matrix $B$ associated with the Petrov-Galerkin discretization  and show the absolute value of the entries in Figure~\ref{fig:plot|B|} (top row).
\begin{figure}
\centering
\includegraphics[width = 7.5cm]{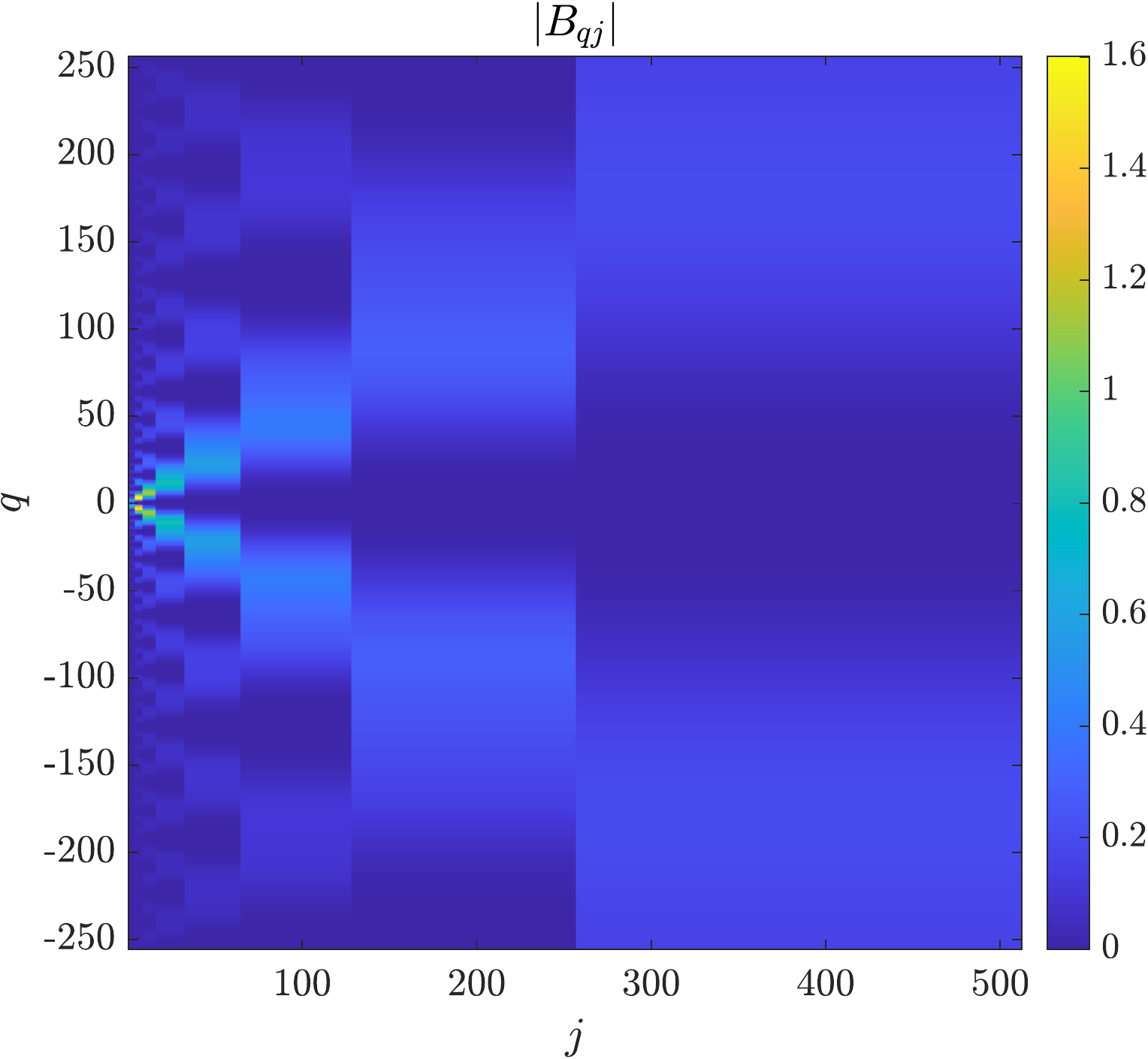}
\includegraphics[width = 7.5cm]{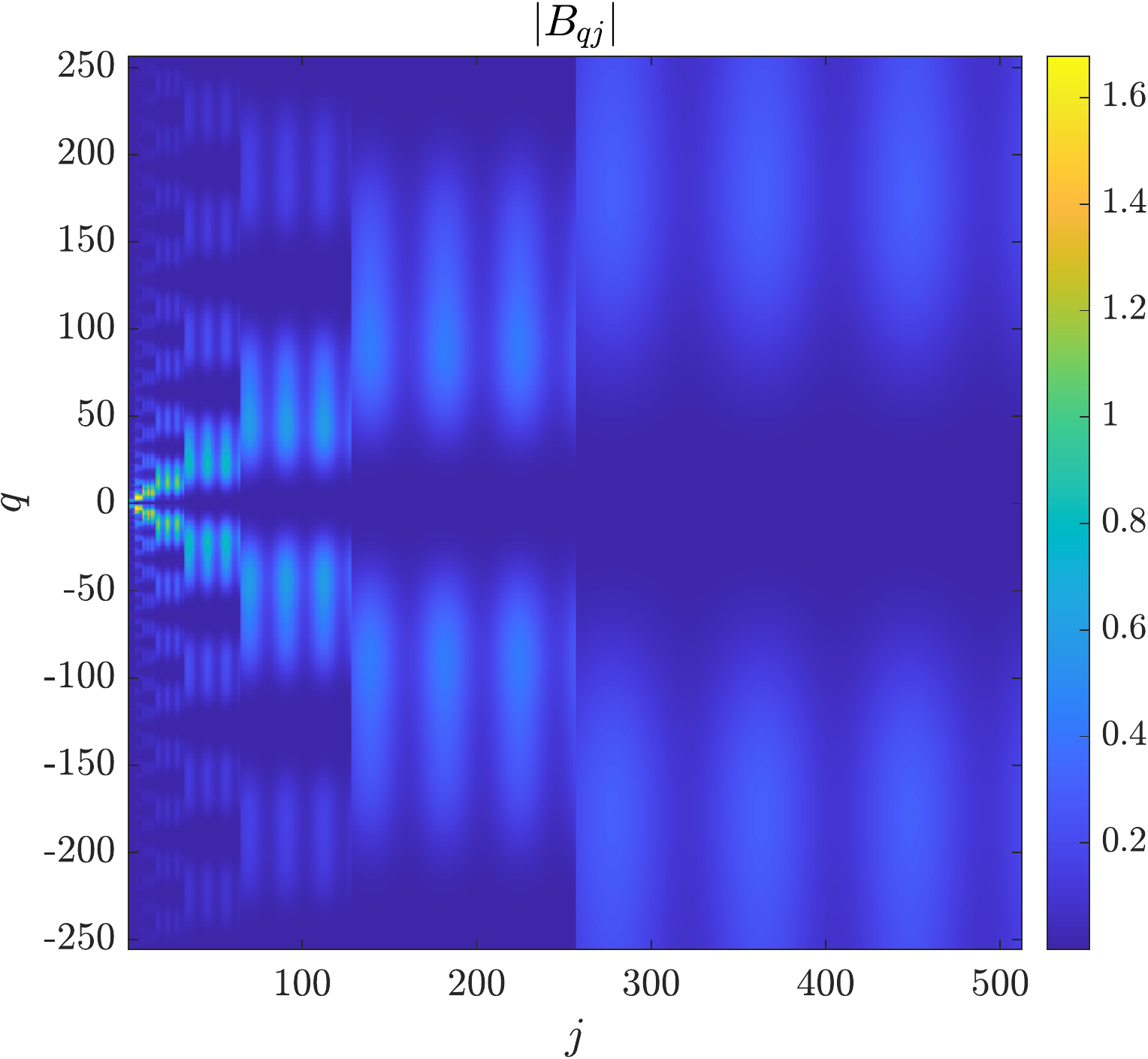}

\vspace{0.25cm}

\includegraphics[width = 7.5cm]{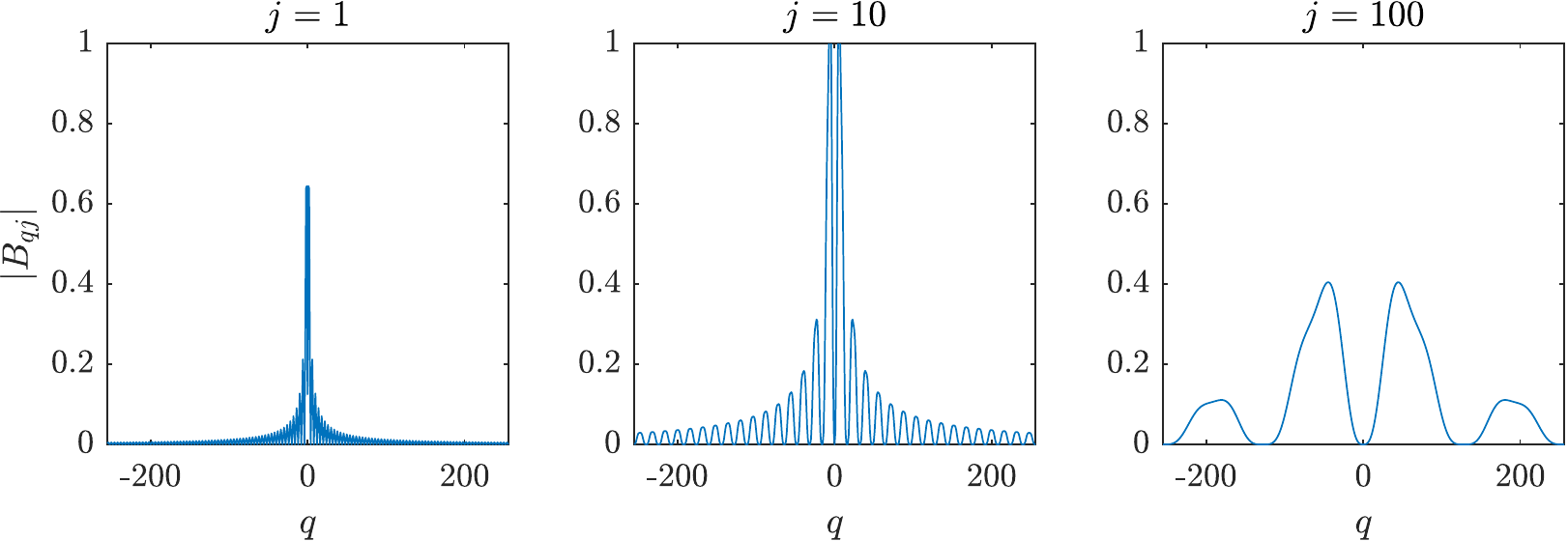}
\includegraphics[width = 7.5cm]{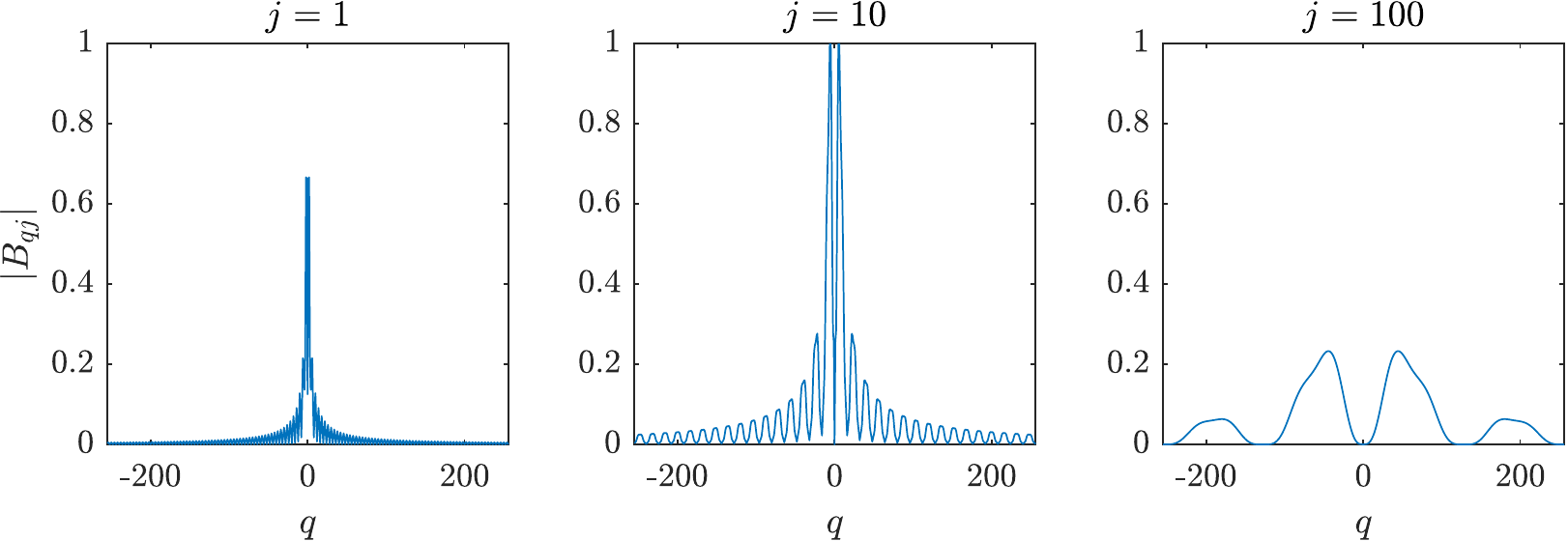}
\caption{\label{fig:plot|B|}(1D DR problem) First row: Absolute value of the entries $|B_{q,j}|$ of the stiffness matrix associated with the Petrov-Galerkin discretization for  $\eta \equiv 1$ (left) and $\eta = 1 + 0.5 \sin(6 \pi x)$ (right). Second row: Three vertical slices of the plots in the first row, i.e., plot of $|B_{q,j}|$ as a function of $q$, for $j = 1, 10, 100$.}
\end{figure} 
We set $L = 9$, resulting in $N = 512$, and choose $R = N$. We observe that the oscillations of the diffusion coefficient only impact the stiffness matrix ``horizontally''. In particular, this does not impact the decay properties of the local $a$-coherence $\bm{\mu}$ (see Figure~\ref{fig:plot|B|}, bottom row). The Wavelet-Fourier Petrov-Galerkin discretization of the ADR equation gives rise to a matrix with a comparable structure with respect to the matrices in \cite[Figure 4]{BreakingBarrier}. This qualitatively confirms that the proposed discretization is suitable for compressed sensing. We also point out that the condition number of $B\in\mathbb{C}^{512\times 512}$ is very small, being $20.4$ for the constant diffusion case and $28.2$ for the nonconstant diffusion case, to be compared  with $10^6$ and $1.6 \cdot 10^6$, respectively, when the trial and test functions are not normalized with respect to the $H^1(\mathcal{D})$-norm.

\paragraph{Best $s$-term approximation.} 
We consider the synthetic solution
\begin{equation}
\label{eq:1D_exa_sol}
u_1(x_1) = 1+\exp\left(-\frac{(x_1-0.3)^2}{0.0005}\right) + \frac12 \cos(2 \pi x_1), \quad 0 \leq x_1 <1.
\end{equation}
This solution is smooth, with a global support in $[0,1)$, and exhibits a bump close to the point $x=0.3$. Moreover, it is periodic up to machine precision (see Figure~\ref{fig:1D_fun} (left)). In Figure~\ref{fig:1D_fun} (right), we show the wavelet coefficients and highlight in red the largest 50 ones in absolute value.
\begin{figure}[t]
\centering
\includegraphics[width = 7.5cm]{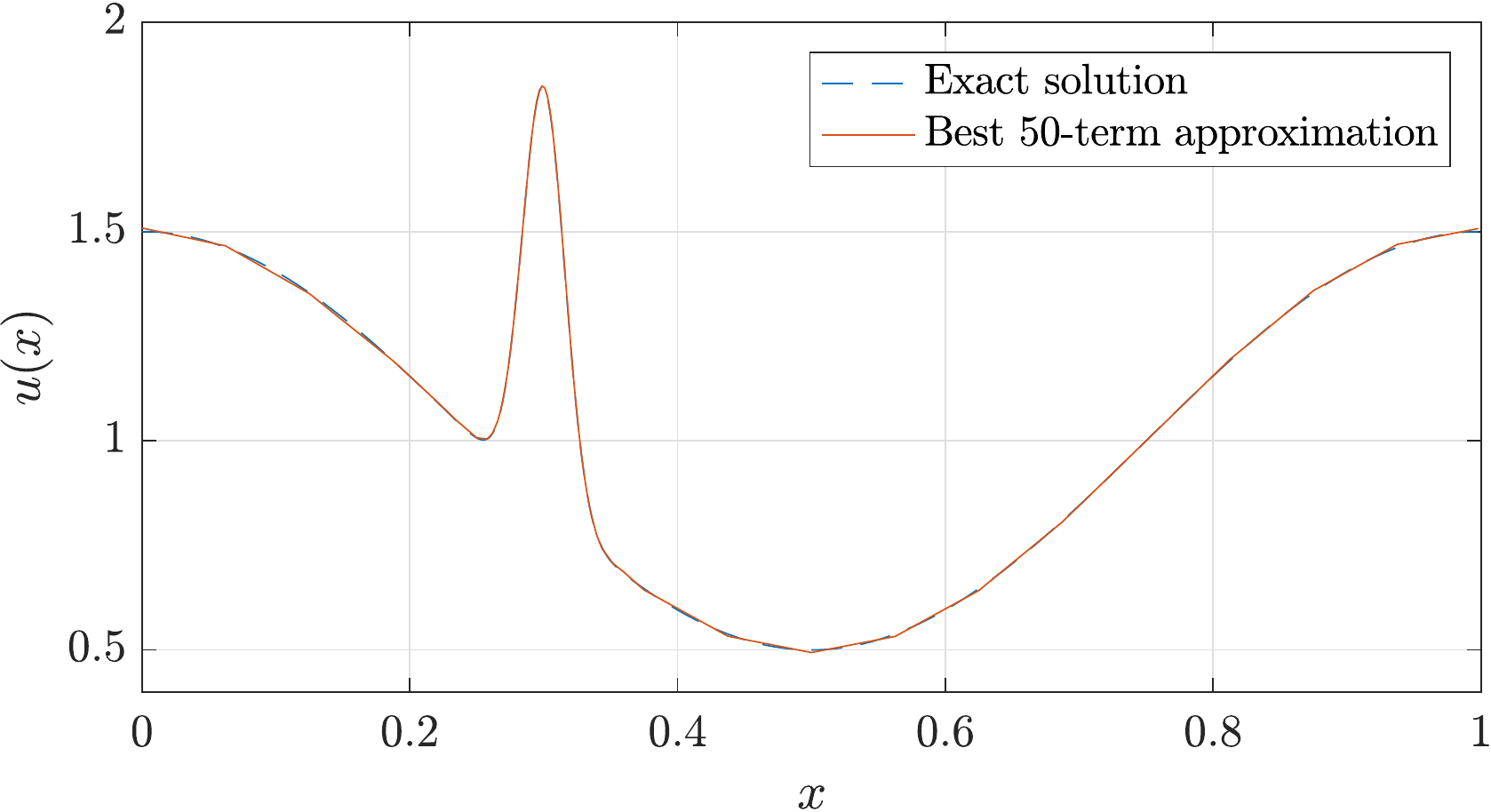}
\includegraphics[width = 7.5cm]{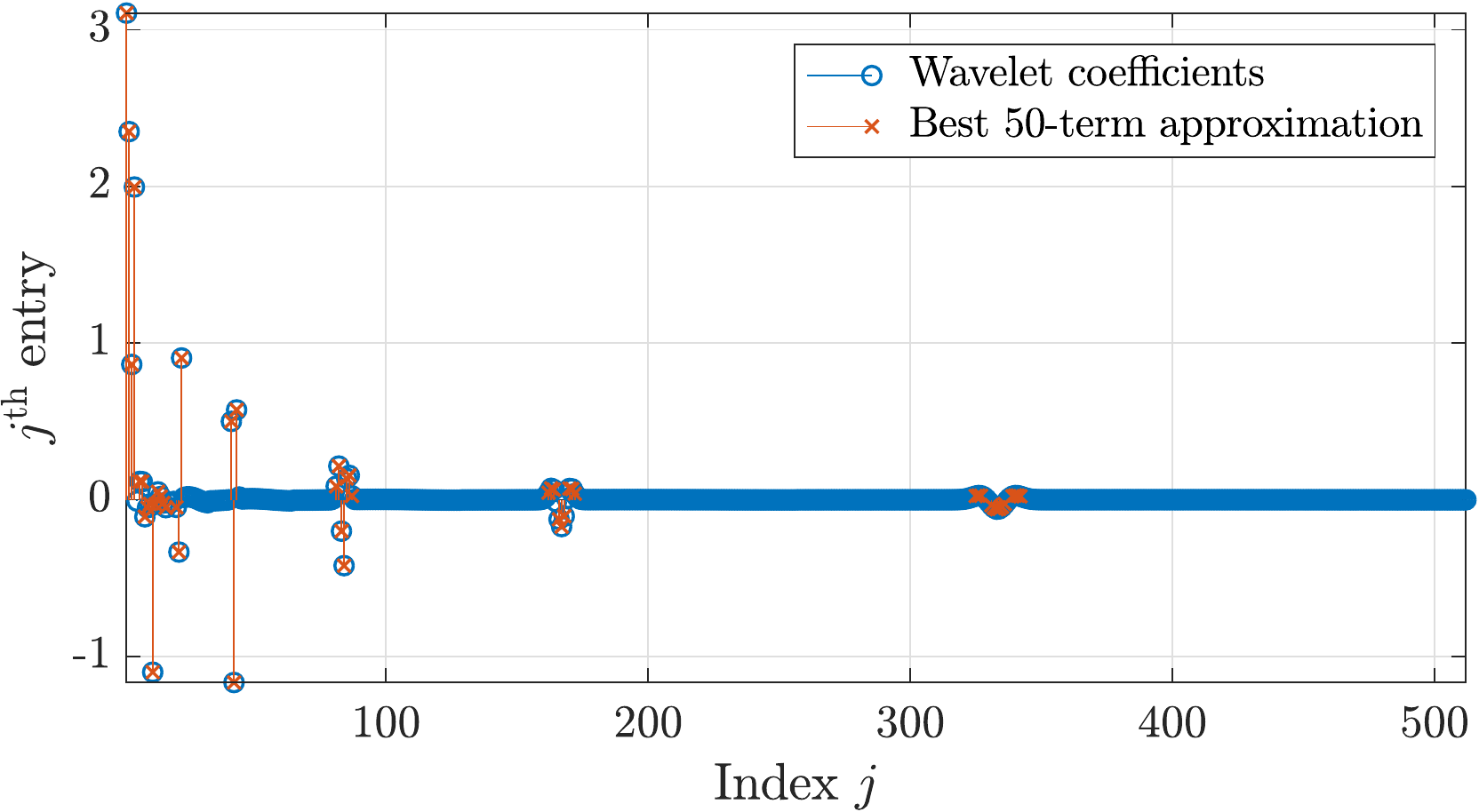}
\caption{\label{fig:1D_fun}(1D DR problem) Plot of $u_1$ and of its best $50$-term approximation $\widetilde{u}_1$ (left). Wavelet coefficients of $u$ with the 50 largest in magnitude highlighted (right).}
\end{figure} 
The resulting relative best 50-term approximation error with respect to the $H^1(\mathcal{D})$-norm is 
\begin{equation}
\label{eq:H1errorapprox}
\frac{\|u_1-\widetilde{u}_1\|_{H^1(\mathcal{D})}}{\|u_1\|_{H^1(\mathcal{D})}} \sim \frac{\|\bm{u}_1-\widetilde{\bm{u}}_{1}\|_2}{\|\bm{u}_1\|_2} = 1.68\cdot 10^{-2},
\end{equation}
where $\bm{u}_1$ is the vector of coefficients of $u_1$ with respect to the biorthogonal wavelet basis $\widehat\Psi$ (normalized with respect to the $H^1(\mathcal{D})$-norm), $\widetilde{\bm{u}}_{1}$ is the best 50-term approximation of $\bm{u}_1$, and $\widetilde{u}_{1}$ is the function corresponding to the wavelet coefficients in $\widetilde{\bm{u}}_{1}$.

\paragraph{Sensitivity of the recovery error to the number of test functions.}
In Figure~\ref{fig:1D_m_vs_err}, we show the box plot of the relative error between $u_1$ and the \corsing approximation, $\widehat{u}_1$, with respect to the $H^1(\mathcal{D})$-norm as a function of the number $m$ of test functions. 
\begin{figure}[t]
\centering
\includegraphics[height = 4.2cm]{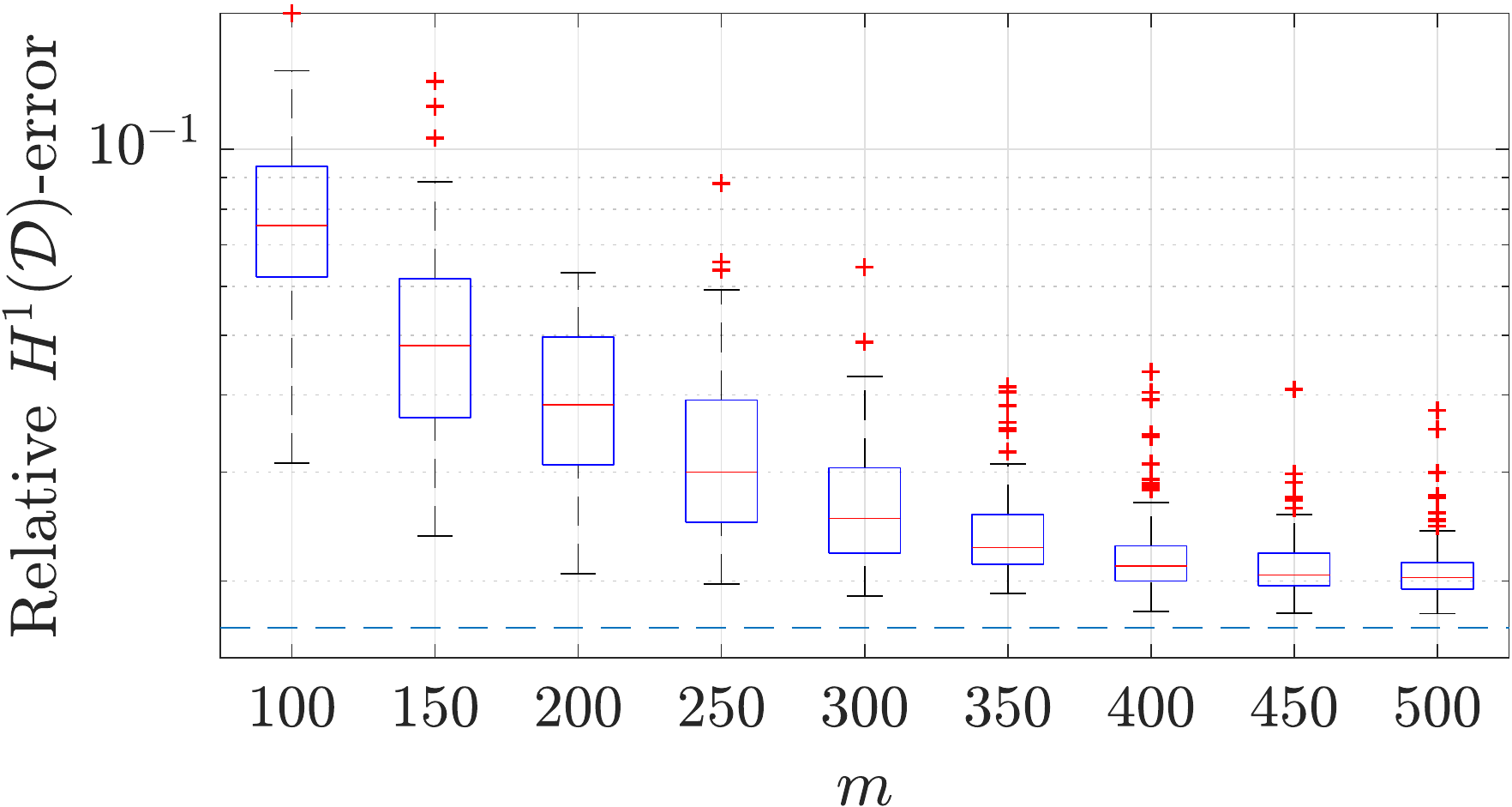}
\includegraphics[height = 4.2cm]{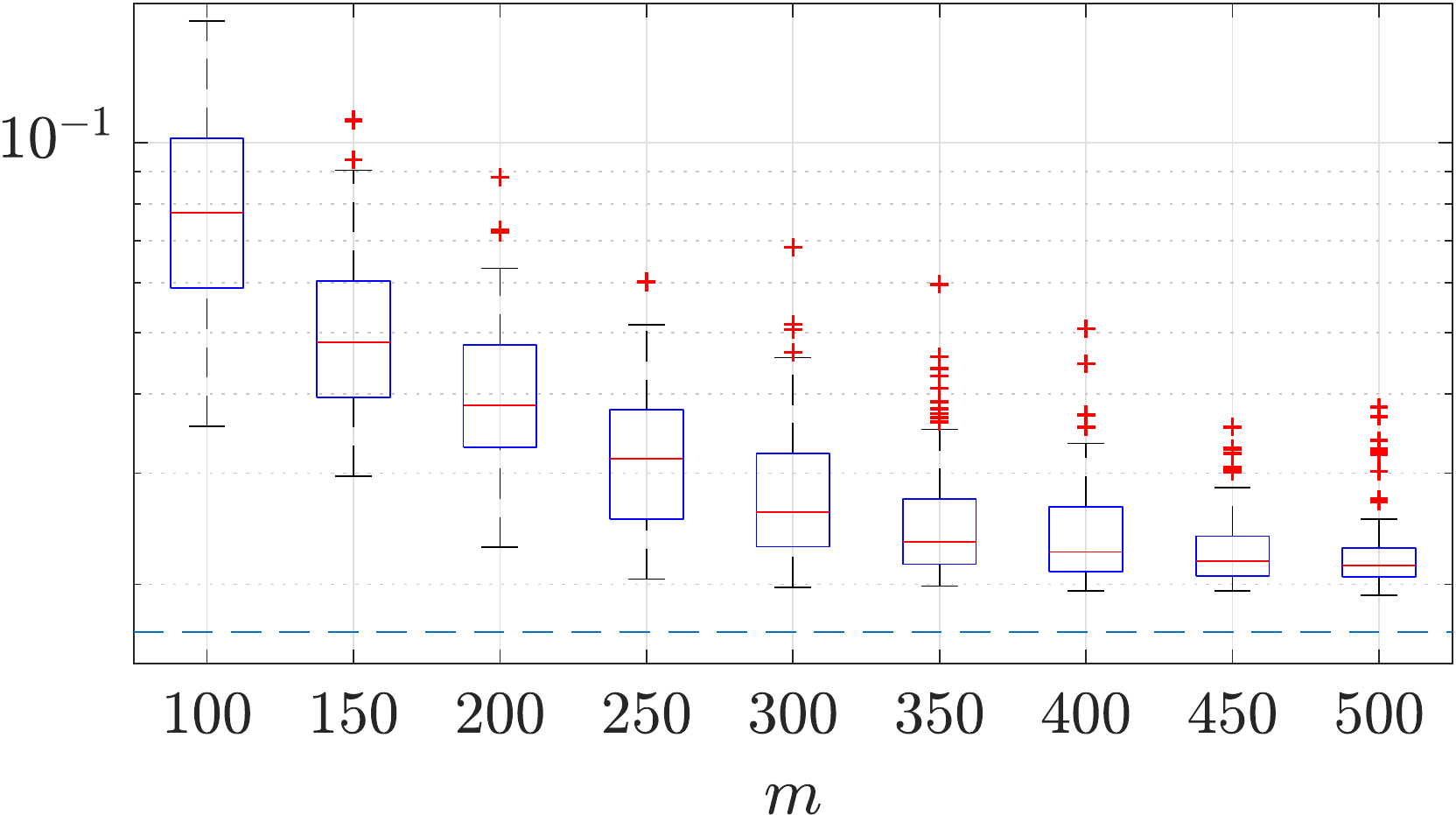}
\caption{\label{fig:1D_m_vs_err}(1D DR problem) Relative recovery error as a function of the number of random tests, $m$, for a constant (left) and nonconstant (right) diffusion term. In dashed line, the relative best 50-term approximation error \eqref{eq:H1errorapprox}.}
\end{figure}
We fix $s = 50$ and let $m$ vary from $100$ to $500$ ($N = 512$). The data are relative to 100 random runs of the \corsing procedure. We can appreciate that for both choices of $\eta$ \corsing is able to reach a good accuracy (less than twice the best $50$-term approximation error) for $m \geq 250$. The presence of a nonconstant diffusion term does not impact the performance of the method to a substantial extent. We only observe more outliers for the nonconstant diffusion.

\paragraph{Sensitivity of the recovery error to the sparsity.}
In Figure~\ref{fig:1D_s_vs_err}, we show the relative \textsf{CORSING} error with respect to the $H^1(\mathcal{D})$-norm as a function of $s$ and compare it with the best $s$-term approximation error. 
\begin{figure}
\centering
\includegraphics[height = 4.2cm]{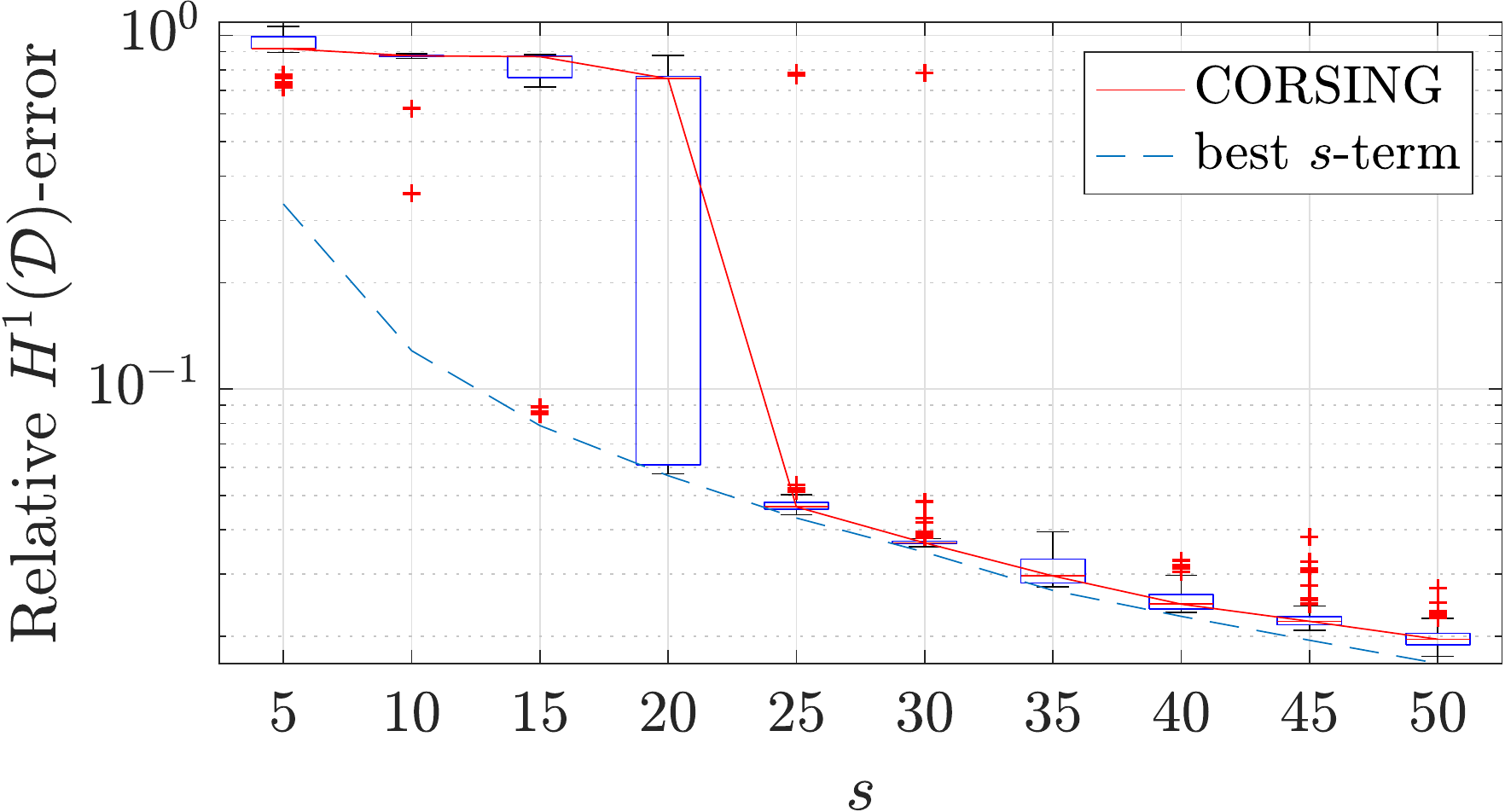}
\includegraphics[height = 4.2cm]{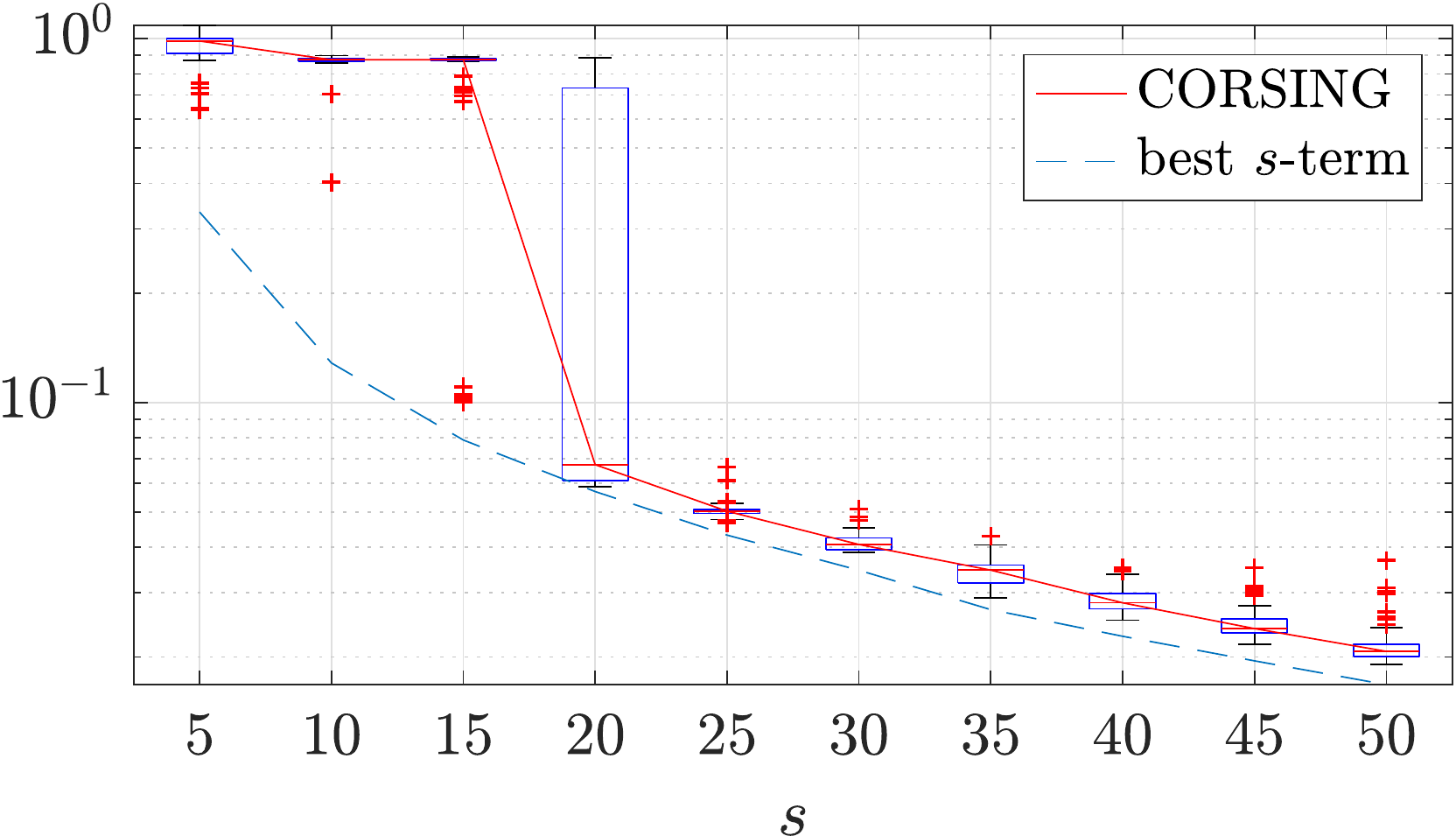}
\caption{\label{fig:1D_s_vs_err}(1D DR problem) Relative recovery and best $s$-term approximation errors as functions of the sparsity $s$ with constant (left) and nonconstant (right) diffusion term.}
\end{figure}
The box plot is relative to 100 runs of \corsing. For each value of $s$ varying between 5 and 50, we set $m = \lceil 2 s \log(N)\rceil$. We remark that, for $s$ large enough, the recovery error exhibits the same decay rate as the best $s$-term approximation error. No striking difference can be detected varying the diffusion.

\subsection{2D case}
\label{sec:2Dnum}
We consider a 2D ADR problem over $\mathcal{D}=(0,1)^2$ with constant coefficients $\mu = \rho = 1$, and $\bm{b} = [1,1]^T$. 

On the one hand, we compare the performance of anisotropic and isotropic wavelets on solutions that exhibit different spatial features. On the other hand, we show that nonuniform sampling strategy based on local $a$-coherence outperforms the uniform random subsampling.

\paragraph{Wavelet coefficients and best $s$-term approximation error.} 
We consider the following solutions:
\begin{align}
\label{eq:def_u2}
u_2(x_1,x_2) & = \exp\left(-\frac{(x_1-0.3)^2}{0.0005}\right)\exp\left(-\frac{(x_2-0.4)^2}{0.0005}\right) + 2\exp\left(-\frac{(x_1-0.6).^2}{0.001}\right)\exp\left(-\frac{(x_2-0.5)^2}{0.005}\right),\\
\label{eq:def_u3}
u_3(x_1,x_2) & =\exp\left(-\frac{(x_1-0.45)^2}{0.005}\right),
\end{align}
both periodic up to machine precision. The function $u_2$ exhibits two local Gaussian-shaped features, one isotropic around the point $(0.3,0.4)$ and the other anisotropic, around $(0.6,0.5)$. The function $u_3$ is purely anisotropic, having a Gaussian behavior along the $x_1$-direction and being constant along the $x_2$-direction. The functions $u_2$ and $u_3$ are shown in Figure~\ref{Figure10} along with the corresponding anisotropic and isotropic $64 \times 64$ wavelet coefficients (with respect to $H^1(\mathcal{D})$-normalized wavelets).
\begin{figure}[t]
\centering
\begin{tabular}{cccc}
\raisebox{1.5cm}{$u_2$} &
\includegraphics[height = 3.7cm]{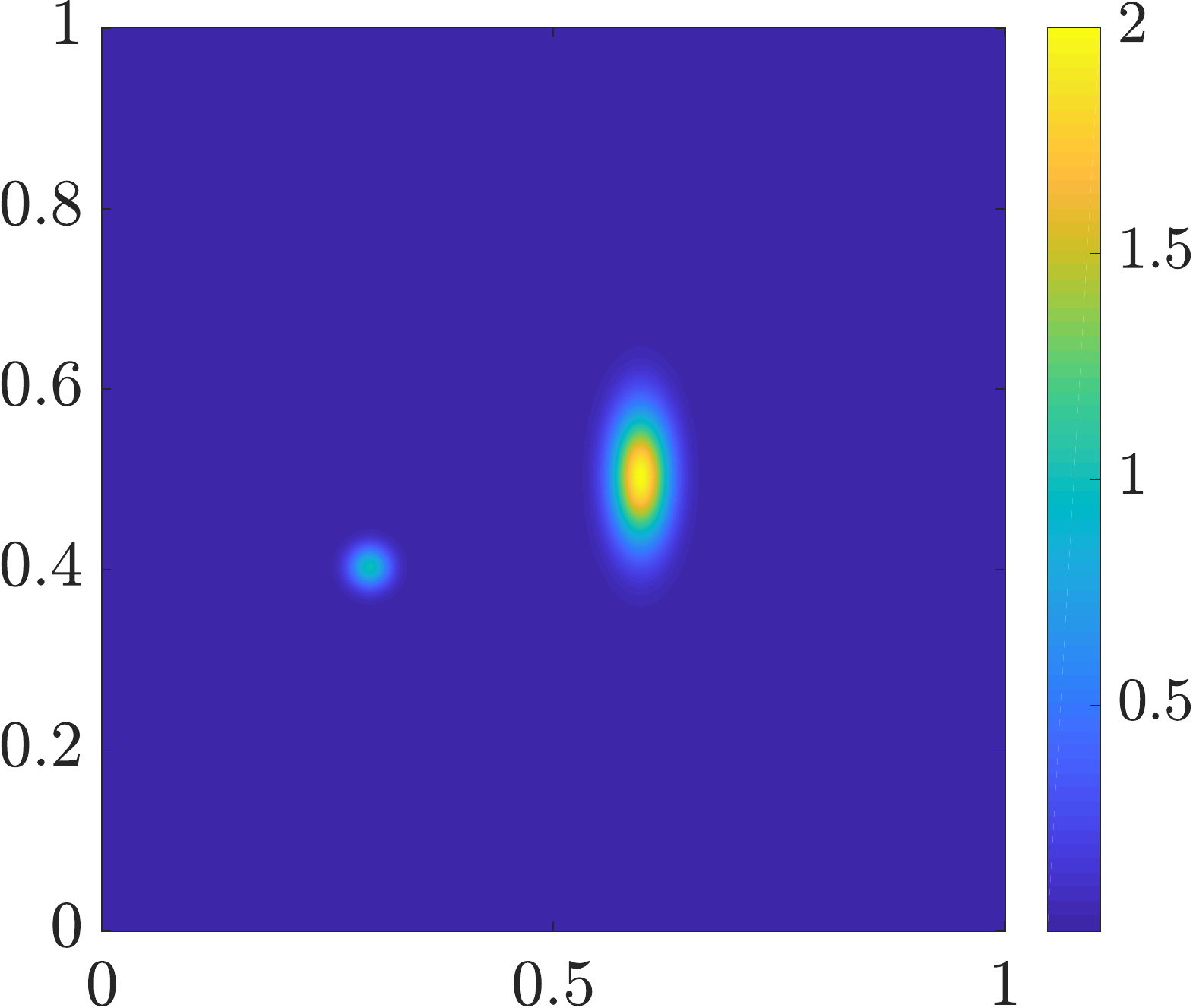} & 
\includegraphics[height = 3.7cm]{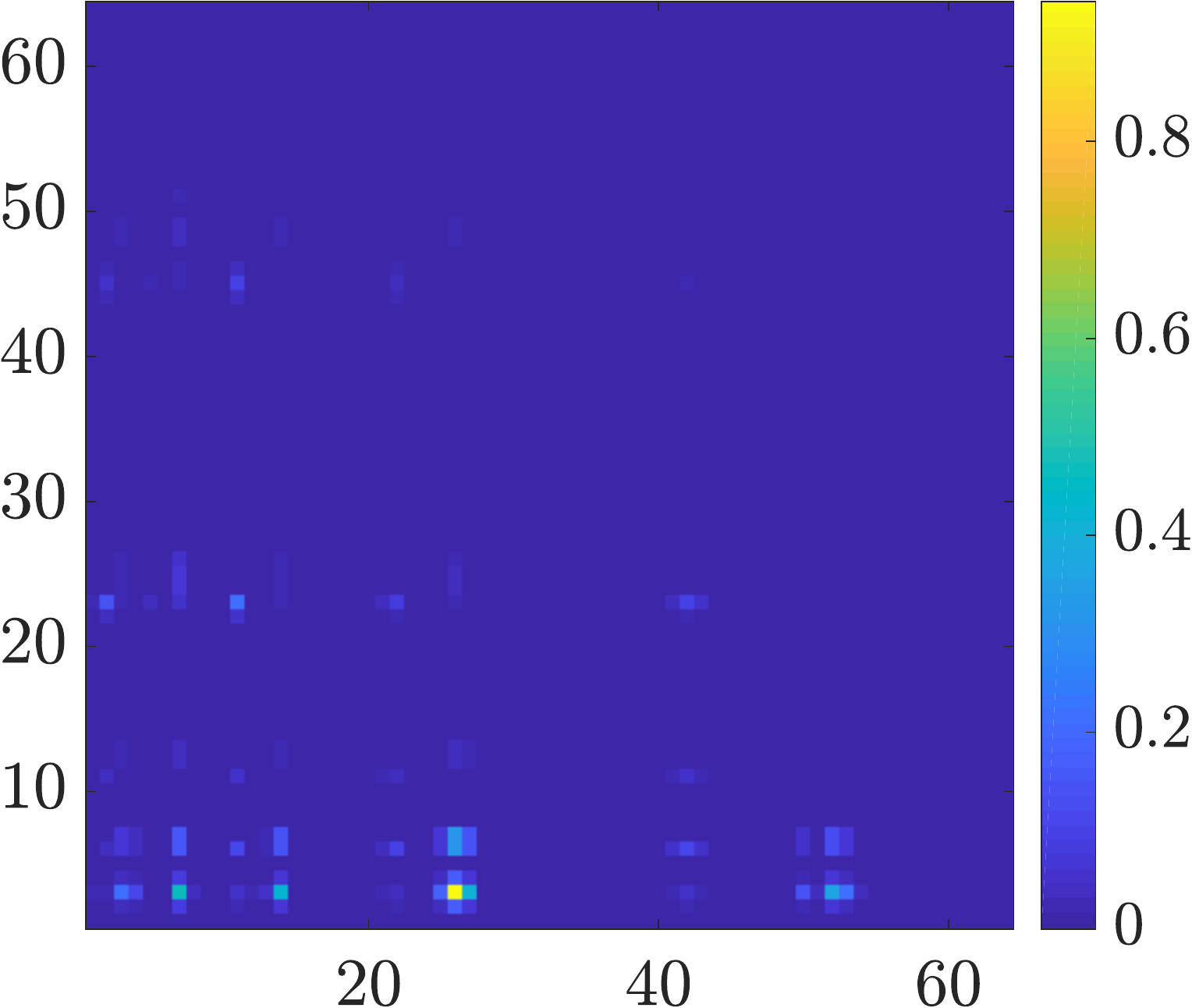} & 
\includegraphics[height = 3.7cm]{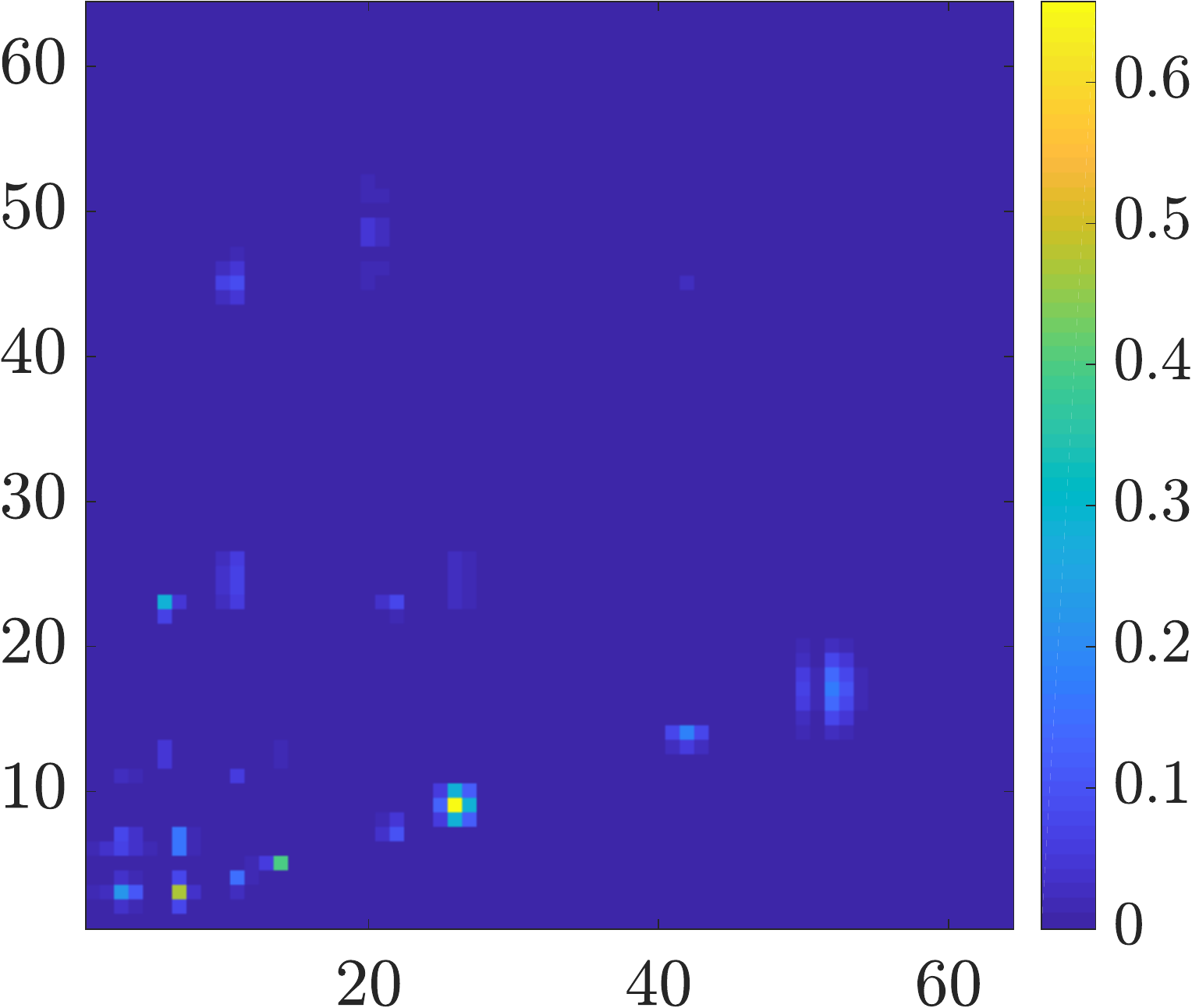} \\
\raisebox{1.5cm}{$u_3$} & 
\includegraphics[height = 3.7cm]{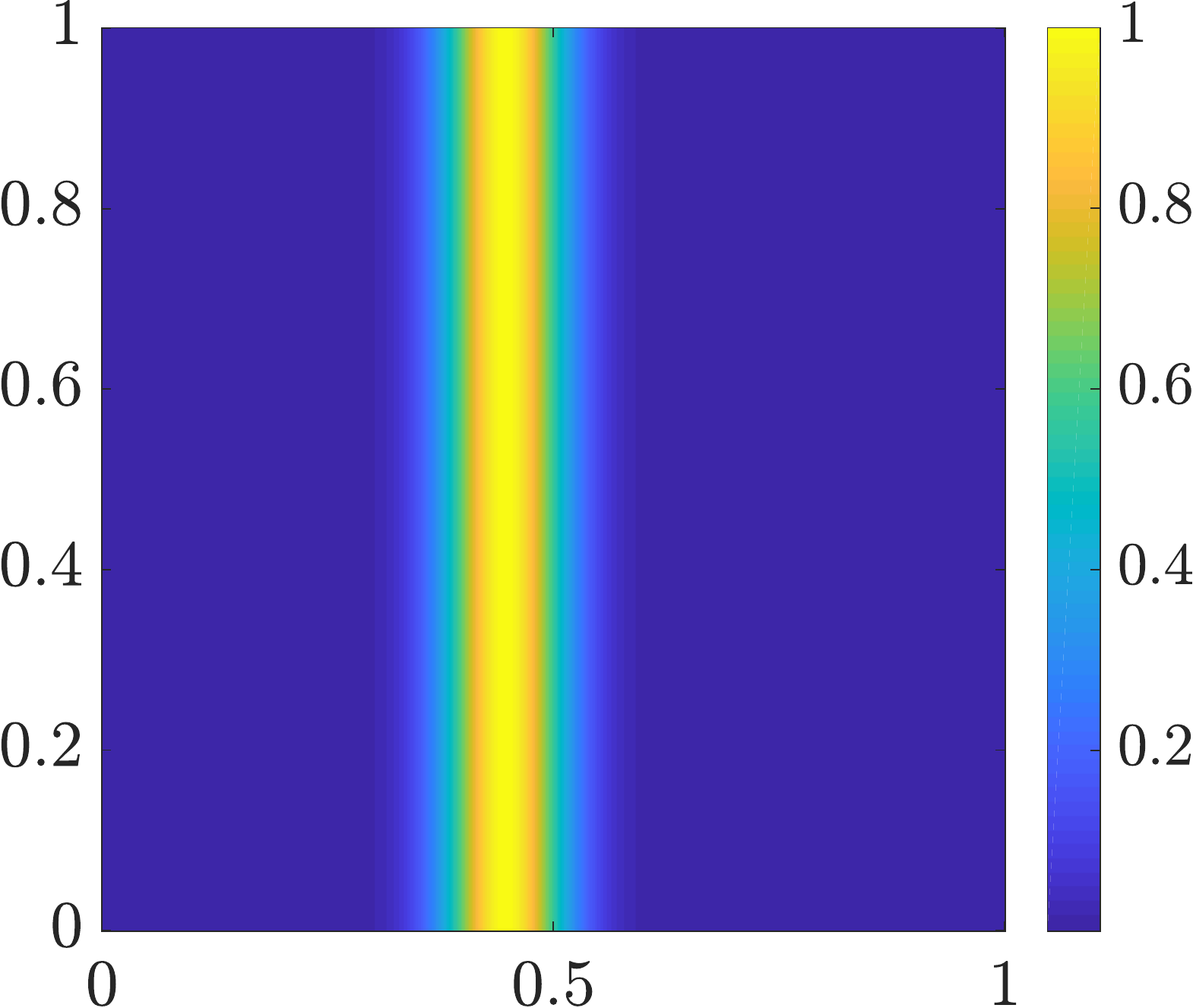} & 
\includegraphics[height = 3.7cm]{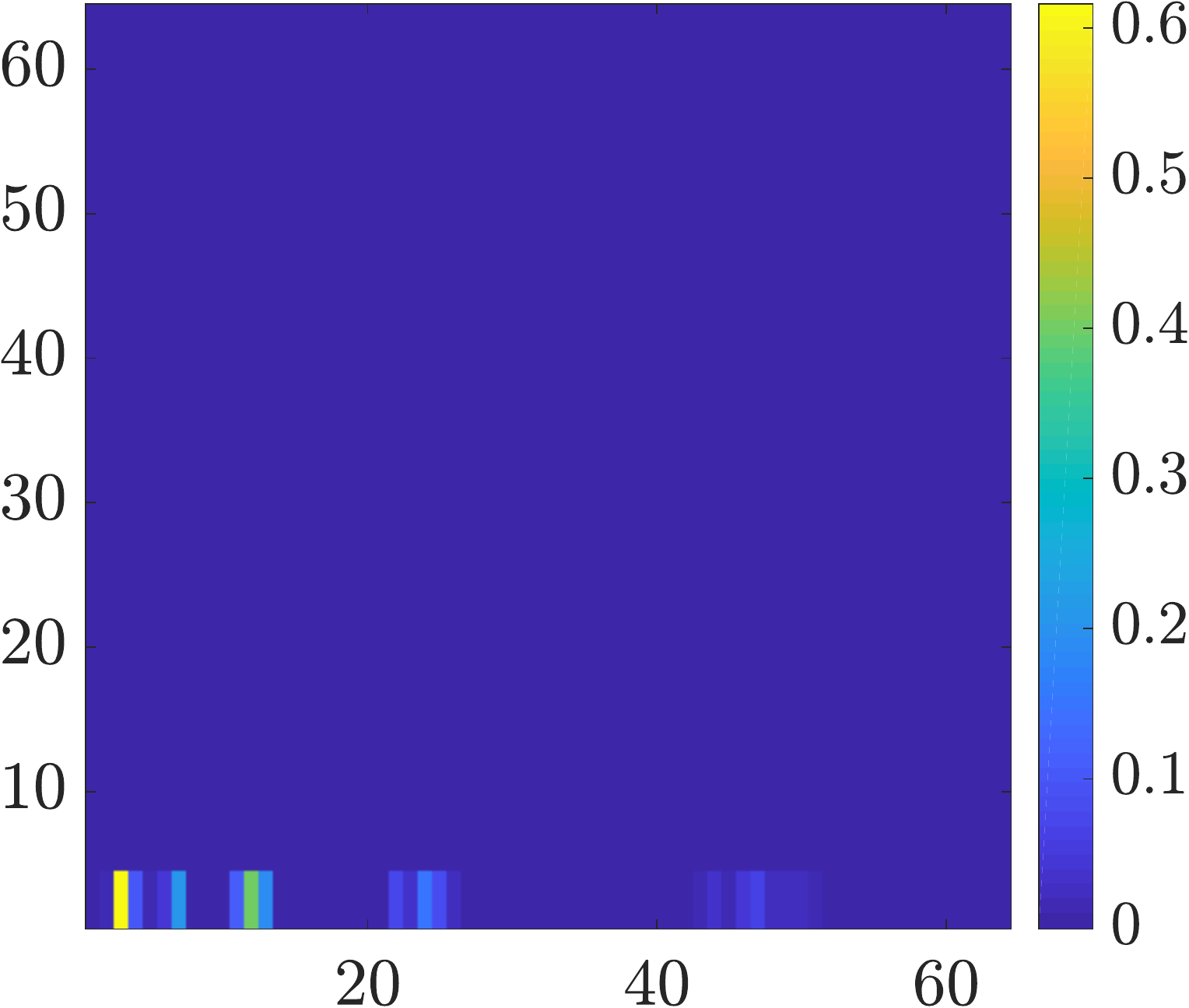} & 
\includegraphics[height = 3.7cm]{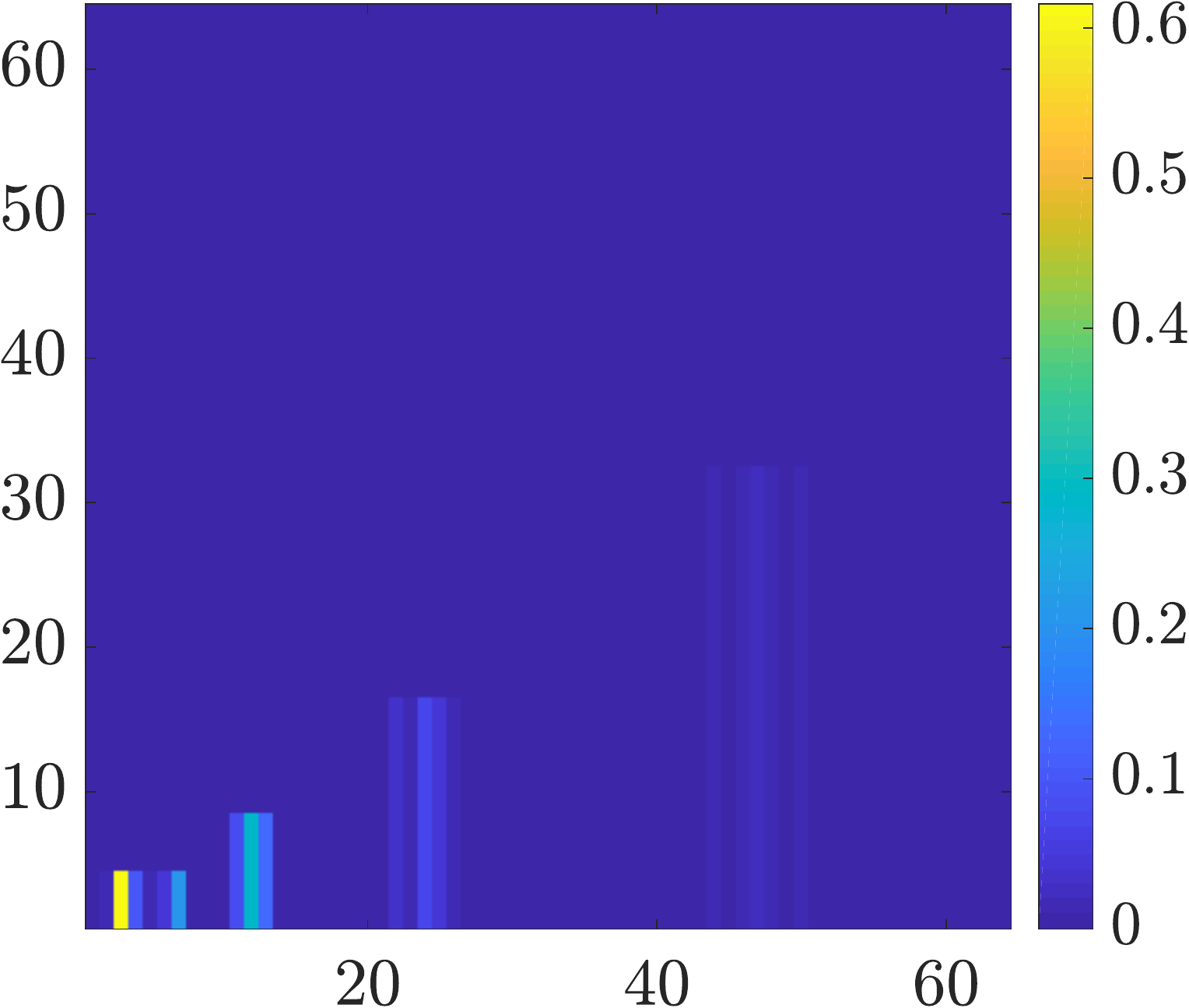} \\
\end{tabular}
\caption{\label{Figure10}(2D ADR problem) Contour plots (left) of functions $u_2$ (top) and $u_3$ (bottom) and corresponding wavelet coefficients with respect to anisotropic (center) and isotropic (right) tensor product wavelets.}
\end{figure}
Letting $s=100$, the relative best $s$-term approximation error with respect to the $H^1(\mathcal{D})$-norm of is $10^{-1}$ (anisotropic wavelets) and $6.6\cdot10^{-2}$ (isotropic wavelets) for $u_2$, and $8.2\cdot10^{-3}$ (anisotropic wavelets) and $1.2\cdot10^{-1}$ (isotropic wavelets) for $u_3$. As expected, anisotropic wavelets generate a very sparse representation of $u_3$. For $u_2$, the compression achieved by anisotropic and isotropic wavelets is comparable, in slight favor of isotropic wavelets.\footnote{Taking advantage of the norm equivalence property, the $H^1(\mathcal{D})$-norm is approximated using the $\ell^2$-norm of the wavelet coefficients as in \eqref{eq:H1errorapprox}.}

\paragraph{Sensitivity of the recovery error to the number of test functions.} 
We assess the performance of \corsing in the case of  anisotropic and isotropic wavelets and compare uniform random subsampling ($\bm{p} \propto \bm{1}$) with the nonuniform subsampling based on the local $a$-coherence upper bound 
$$
\nu_{\bm{q}} = \min\left\{1,\frac{\|\bf{q}\|_2^2}{\|\bm{q}\|_\infty^2 |\widehat{\bm{q}}|^{\bm{1}}}\right\},
$$ 
which is obtained from the upper bounds in Theorems~\ref{thm:mu_bound_multi_ani} and \ref{thm:mu_bound_multi_iso}. In particular, in \eqref{eq:mu_UB_multid} and \eqref{eq:mu_UB_multid_iso}, we consider the second argument of the minimum and use that $2^{-(n-\|\bm{q}\|_0)\ell_0} \leq 1$, while we use that $2^{-(2+n)\ell_0} \leq 1$ in \eqref{eq:mu_UB_multid_q0} and \eqref{eq:mu_UB_multid_q0_iso}.  We set $\ell_0 = 2$, $L=6$ (corresponding to $N =2^{2L} = 4096$). As for the test space, we fix $R = 2^L$, corresponding to $M = N$. Although issue (i) in Theorems~\ref{thm:CORSING_rec_multi_ani} and \ref{thm:CORSING_rec_multi_iso} suggests choosing $R \sim sN^{3-\frac{2}{n}}$, the choice $R = N$ turns out to be sufficient to have a well-conditioned Petrov-Galerkin discretization matrix $B$ in practice. We set $s = 100$ and let $m = 100, 200, 300, 400, 500$. For each value of $m$, we run $100$ tests of \corsing with uniform and nonuniform subsampling. We plot the relative recovery error measured with respect to the $H^1(\mathcal{D})$-norm as a function of the number of tests $m$ in Figures~\ref{Figure11} and \ref{Figure12} for $u_2$ and $u_3$, respectively.
\begin{figure}[t]
\centering
\includegraphics[height=4cm]{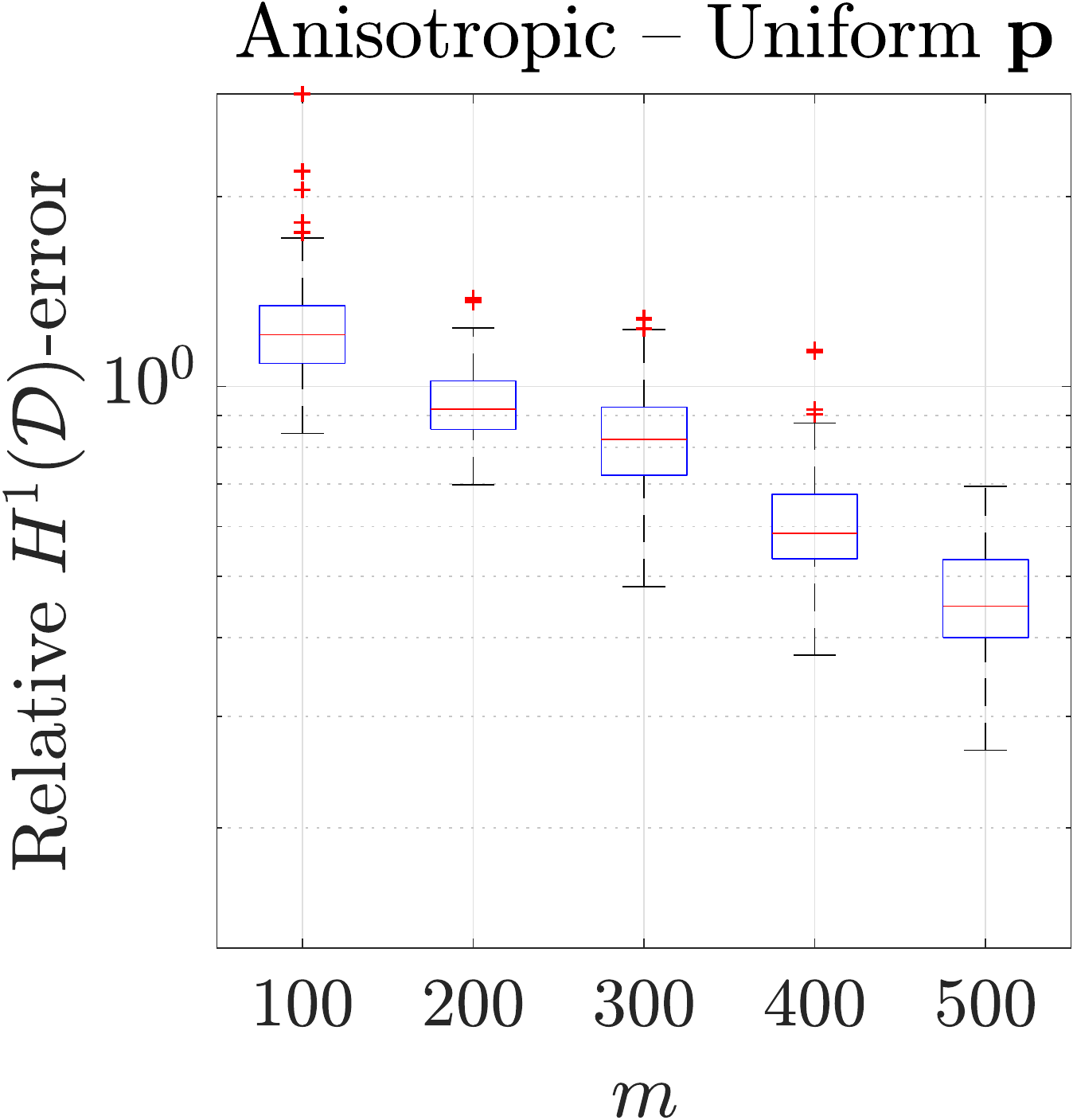} 
\includegraphics[height=4cm]{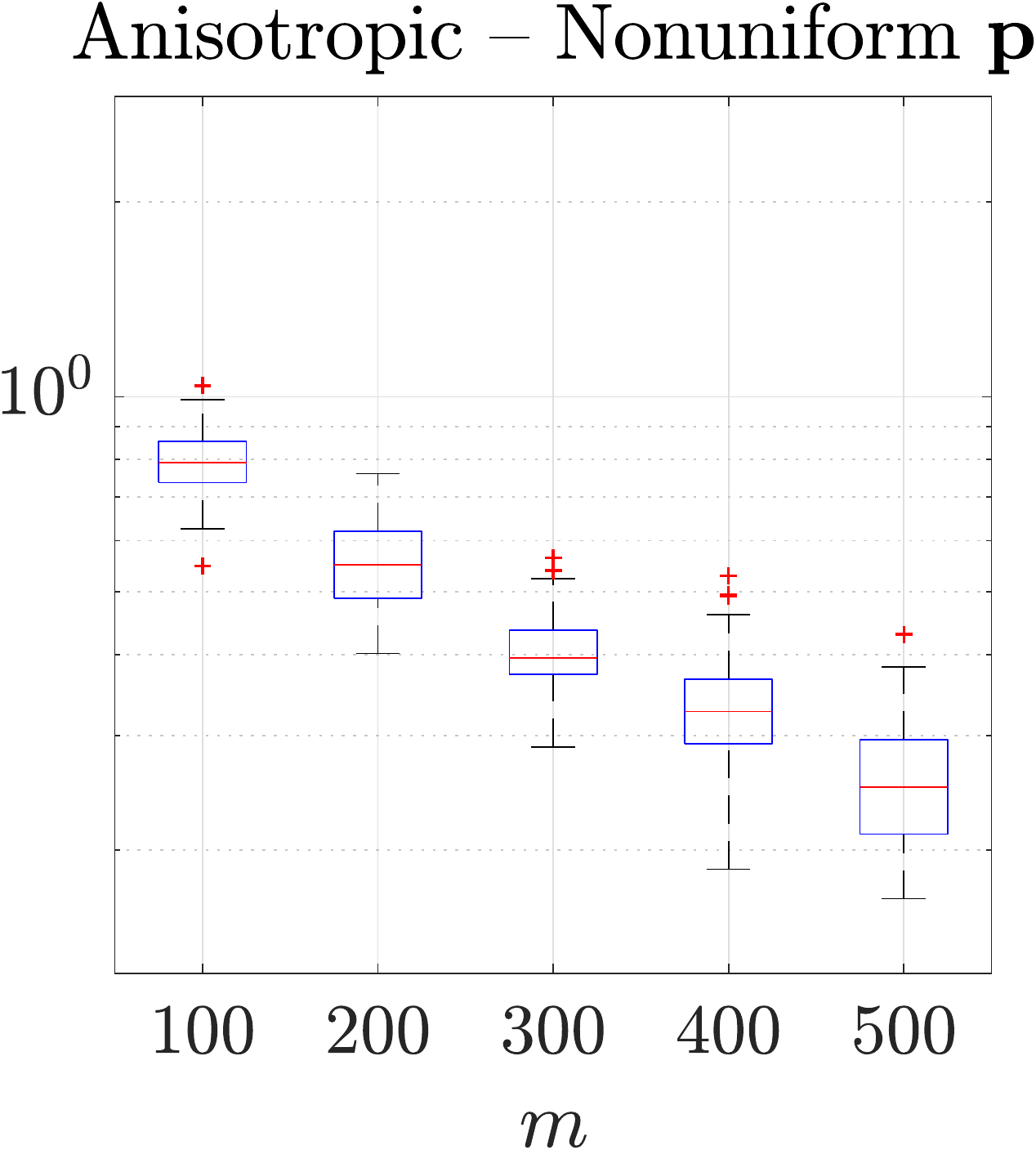} 
\includegraphics[height=4cm]{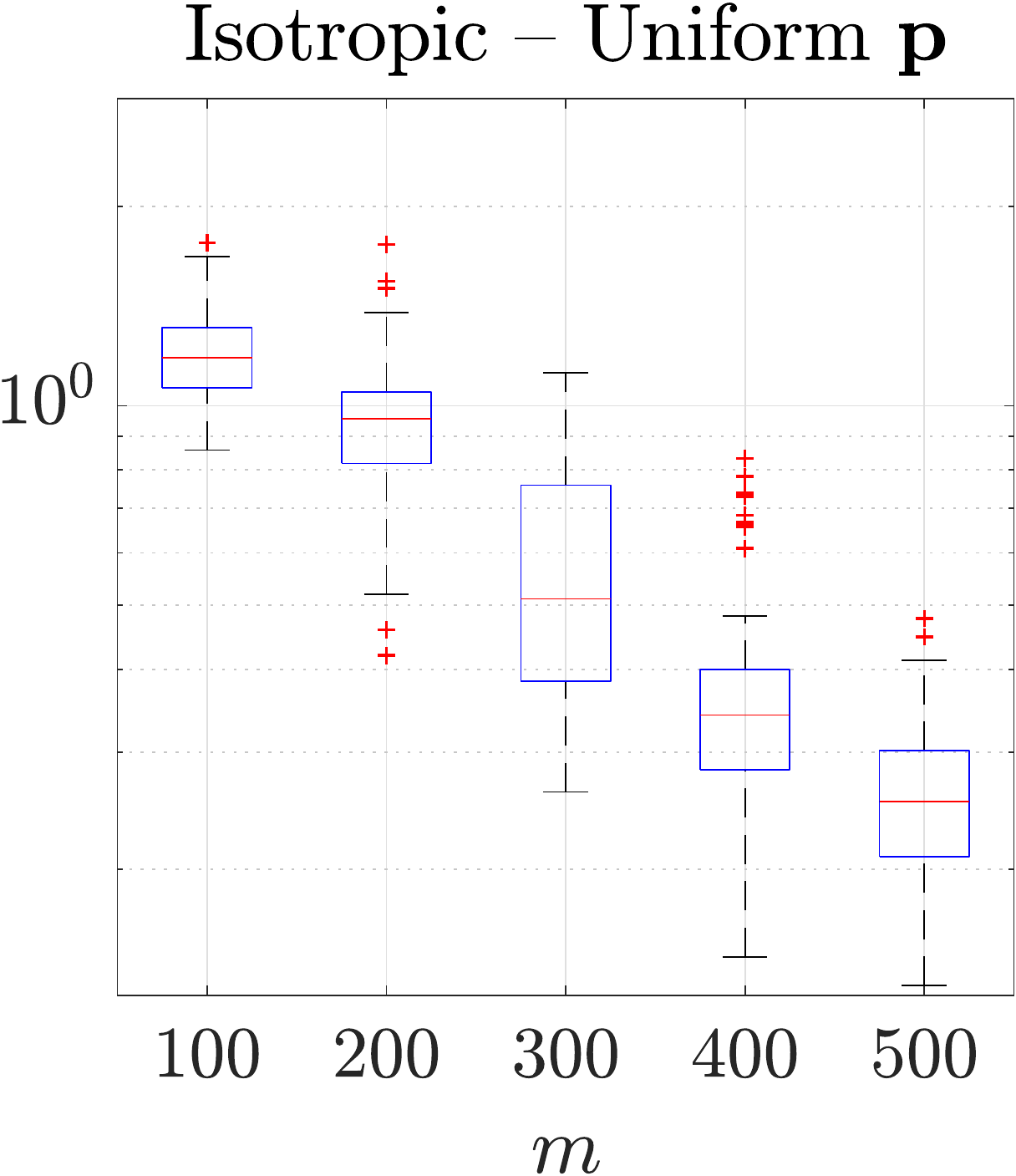} 
\includegraphics[height=4cm]{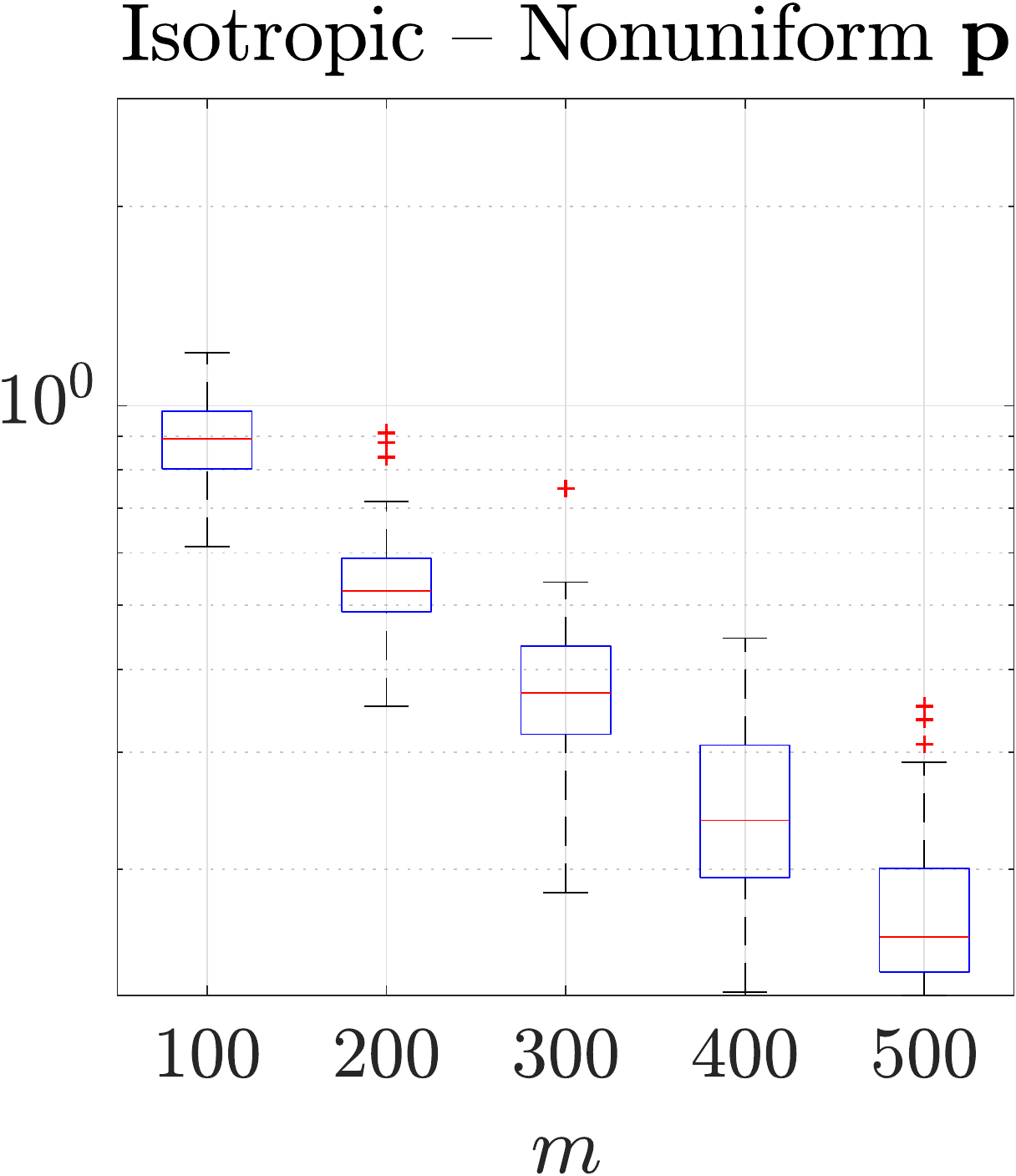} 
\caption{\label{Figure11}(2D ADR problem) Box plot of the relative recovery error for the function $u_2$ with respect to the $H^1(\mathcal{D})$-norm as a function of the number of tests $m$ with anisotropic and isotropic wavelets and uniform and nonuniform subsampling.}
\end{figure}
In the case of $u_2$, isotropic wavelets slightly outperform anisotropic wavelets. The benefit of nonuniform subsampling over uniform subsampling is evident both in terms of the probability of success (smaller boxes) and of accuracy. For $u_3$, anisotropic wavelets significantly outperform isotropic wavelets, thanks to the better compressibility of the solution. Moreover, uniform sampling fails to recover the solutions in both cases, whereas nonuniform sampling exhibits a convergent behavior. This experiment confirms the key role played by the local $a$-coherence for a successful implementation of the \corsing \WF method.
\begin{figure}[t]
\centering
\includegraphics[height=4cm]{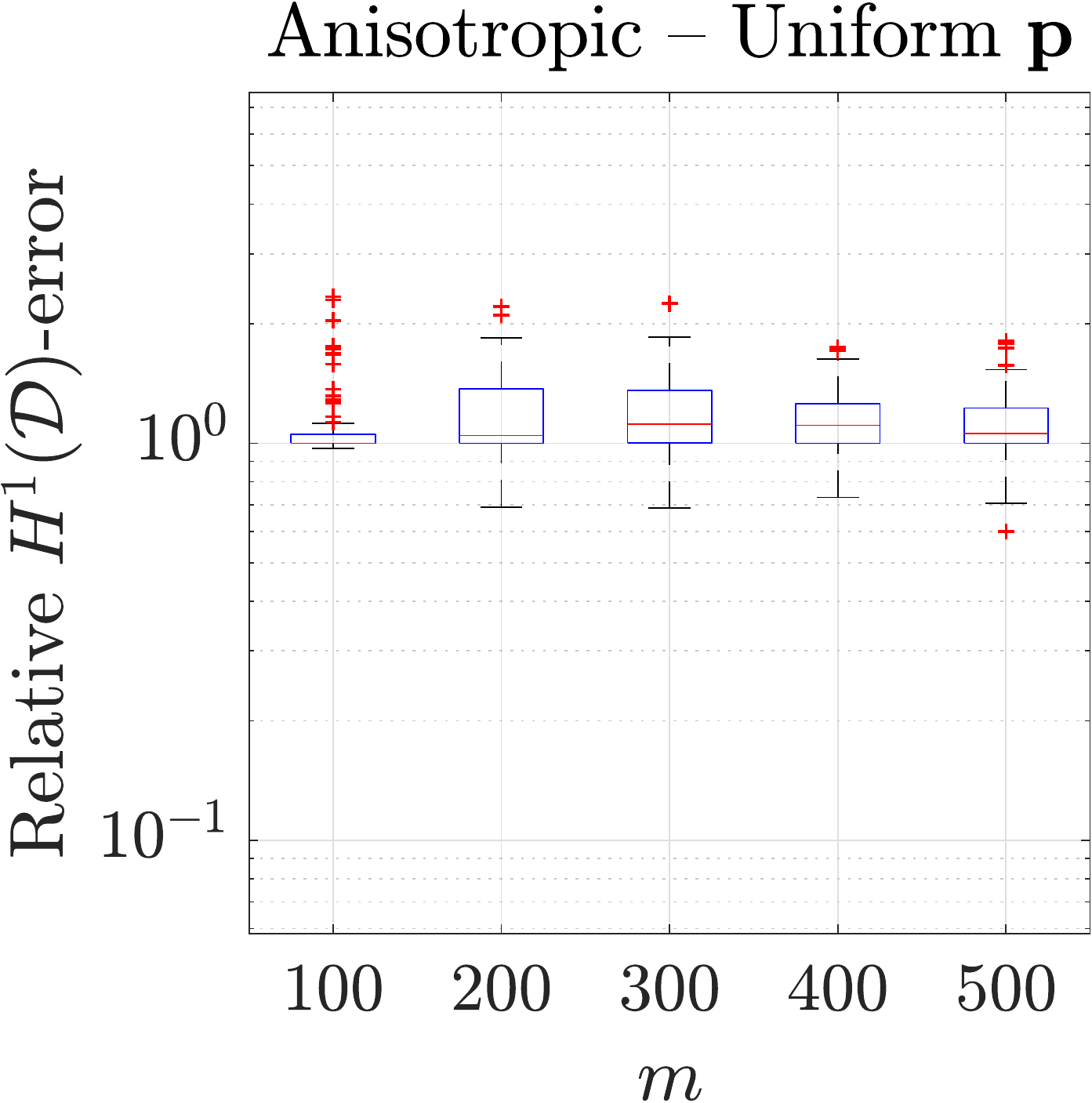} 
\includegraphics[height=4cm]{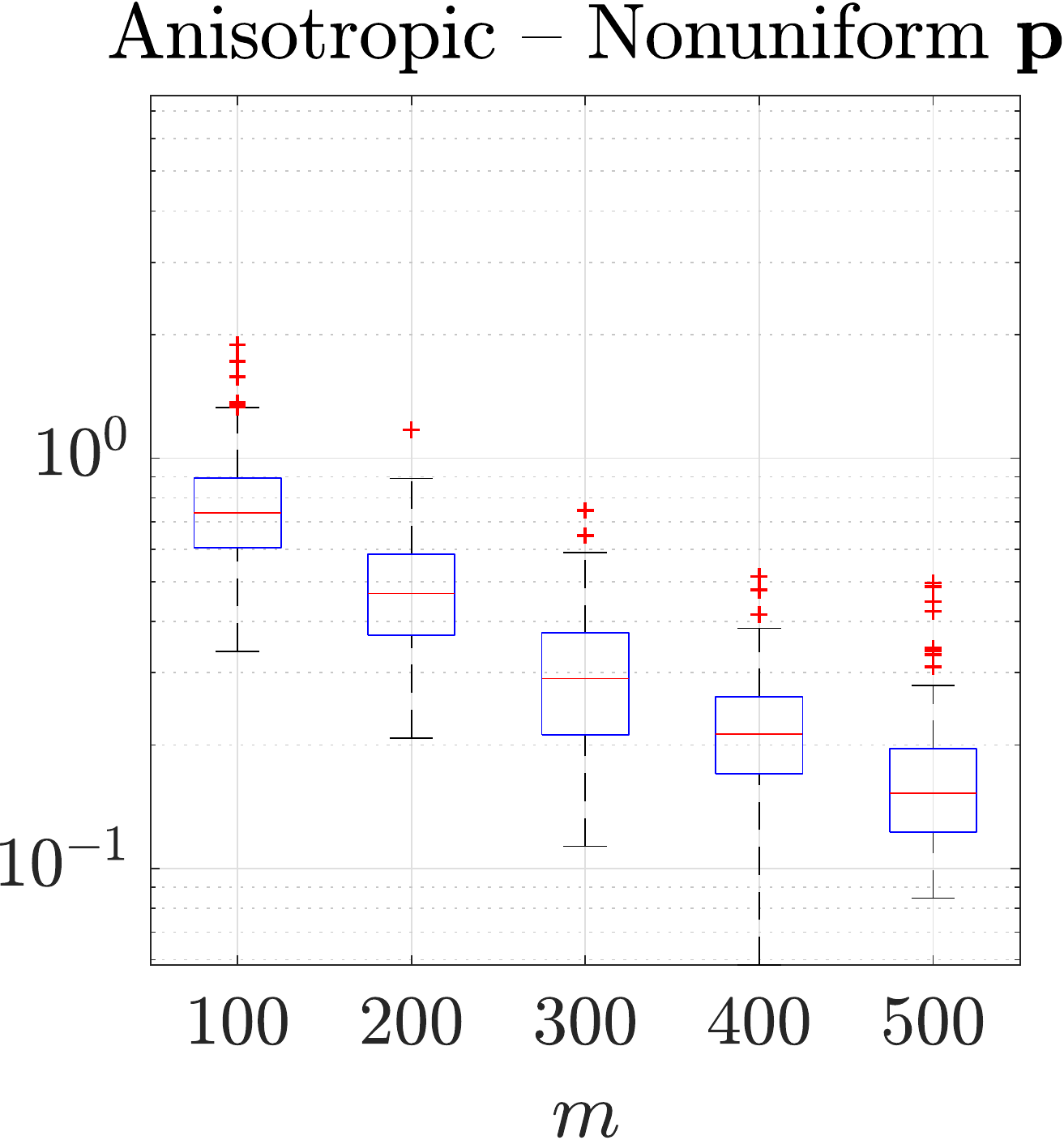} 
\includegraphics[height=4cm]{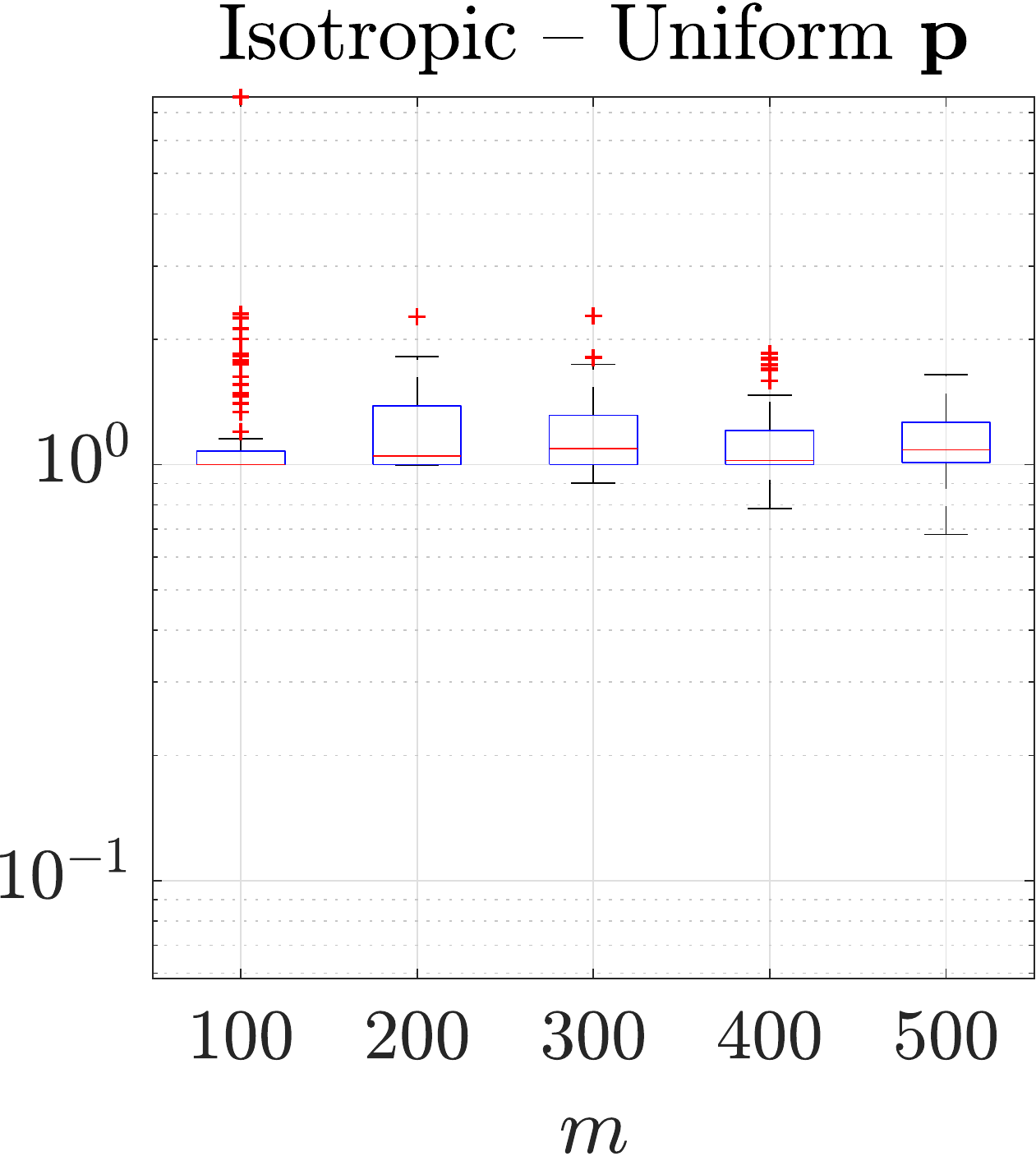} 
\includegraphics[height=4cm]{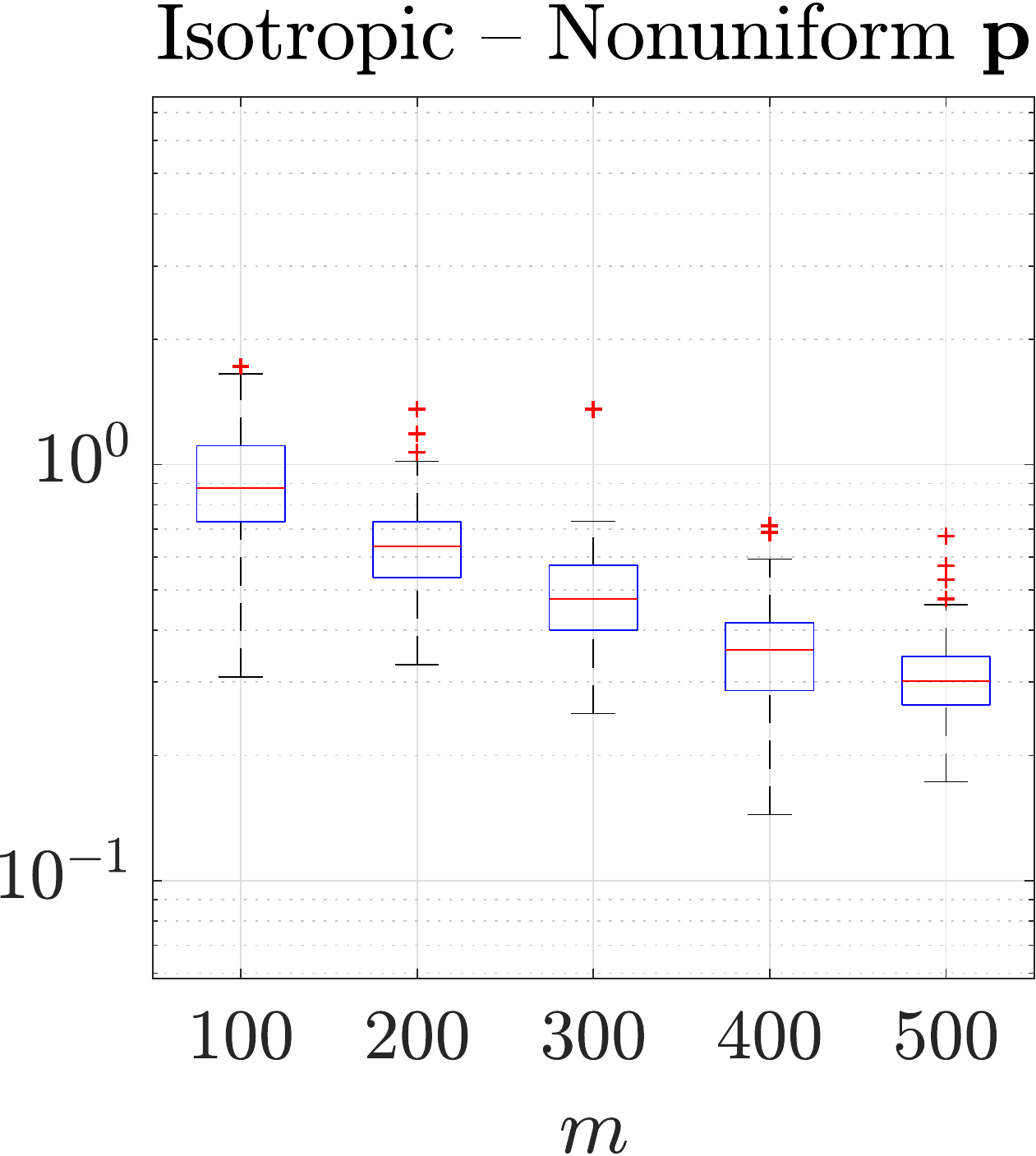}
\caption{\label{Figure12}(2D ADR problem) Box plot of the relative recovery error for the function $u_3$ with respect to the $H^1(\mathcal{D})$-norm as a function of the number of tests $m$ with anisotropic and isotropic wavelets and uniform and nonuniform subsampling.}
\end{figure}

\subsection{The 3D case}
\label{sec:3Dnum}
We validate the \corsing \WF method on a 3D ADR problem on $\mathcal{D}=(0,1)^3$ with constant coefficients $\mu = \rho = 1$, \and $\bm{b} = [1, 1, 1]^T$. We consider the exact solution
\begin{equation}
\label{eq:def_u4}
u_4(x_1,x_2,x_3) = 
\exp\left(-\frac{(x_1-0.4)^2}{0.005}\right)
\exp\left(-\frac{(x_2-0.5)^2}{0.0005}\right)
\exp\left(-\frac{(x_3-0.6)^2}{0.005}\right).
\end{equation}
The function $u_4$ exhibits an anisotropic Gaussian-shaped feature centered at the point $(0.4,0.5,0.6)$. We compare anisotropic and isotropic wavelets.

\paragraph{Wavelet coefficients and best $s$-term approximation.} 
We fix $\ell_0 = 2$, $L = 4$ (corresponding to a trial space of dimension $N = 2^{3L} =4096$) and $s = 200$. The wavelet coefficients and the best $s$-term approximation are shown in Figure~\ref{Figure13}.
\begin{figure}[t]
\centering
\includegraphics[height = 4.1cm]{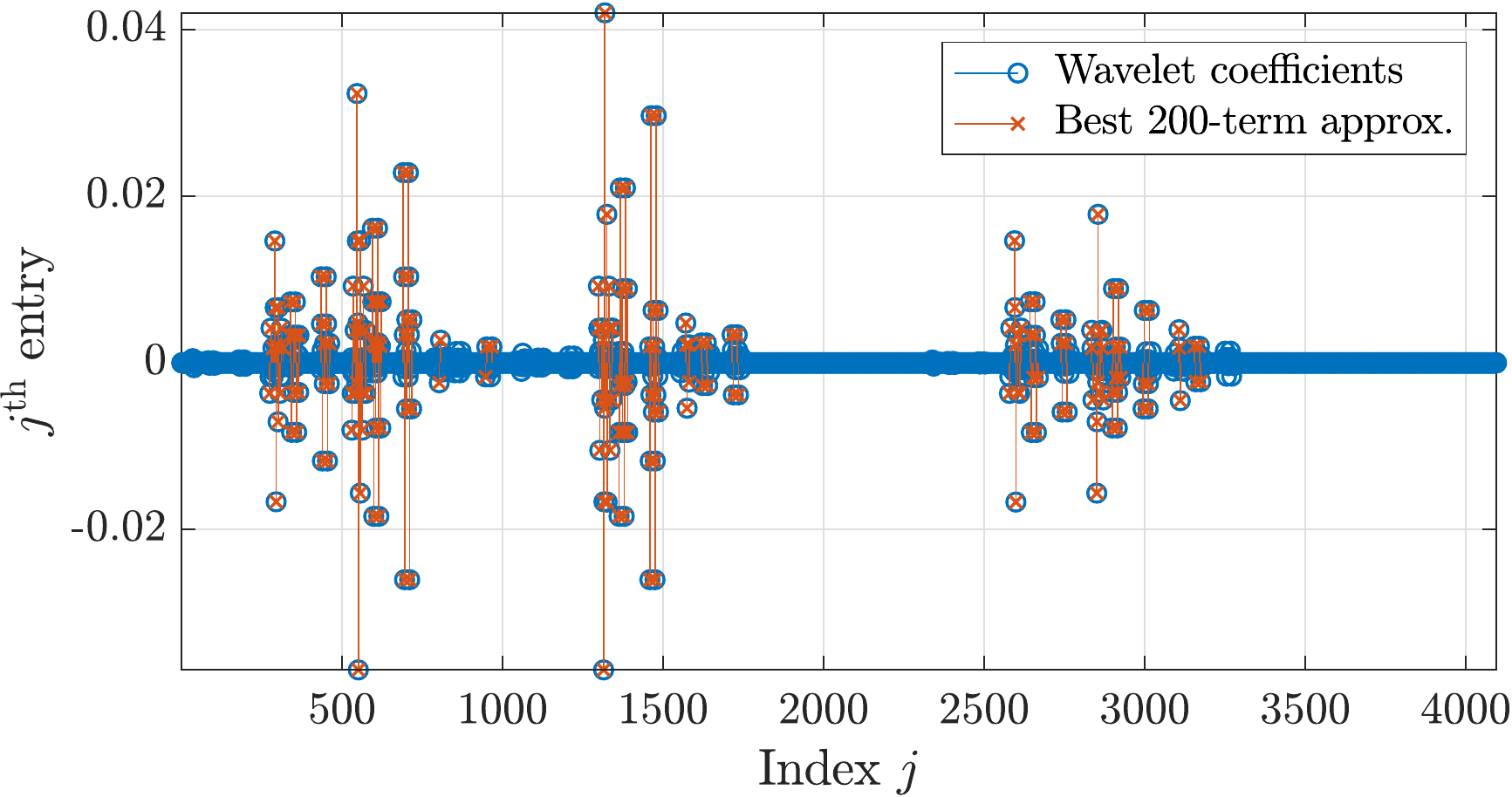} 
\includegraphics[height = 4.1cm]{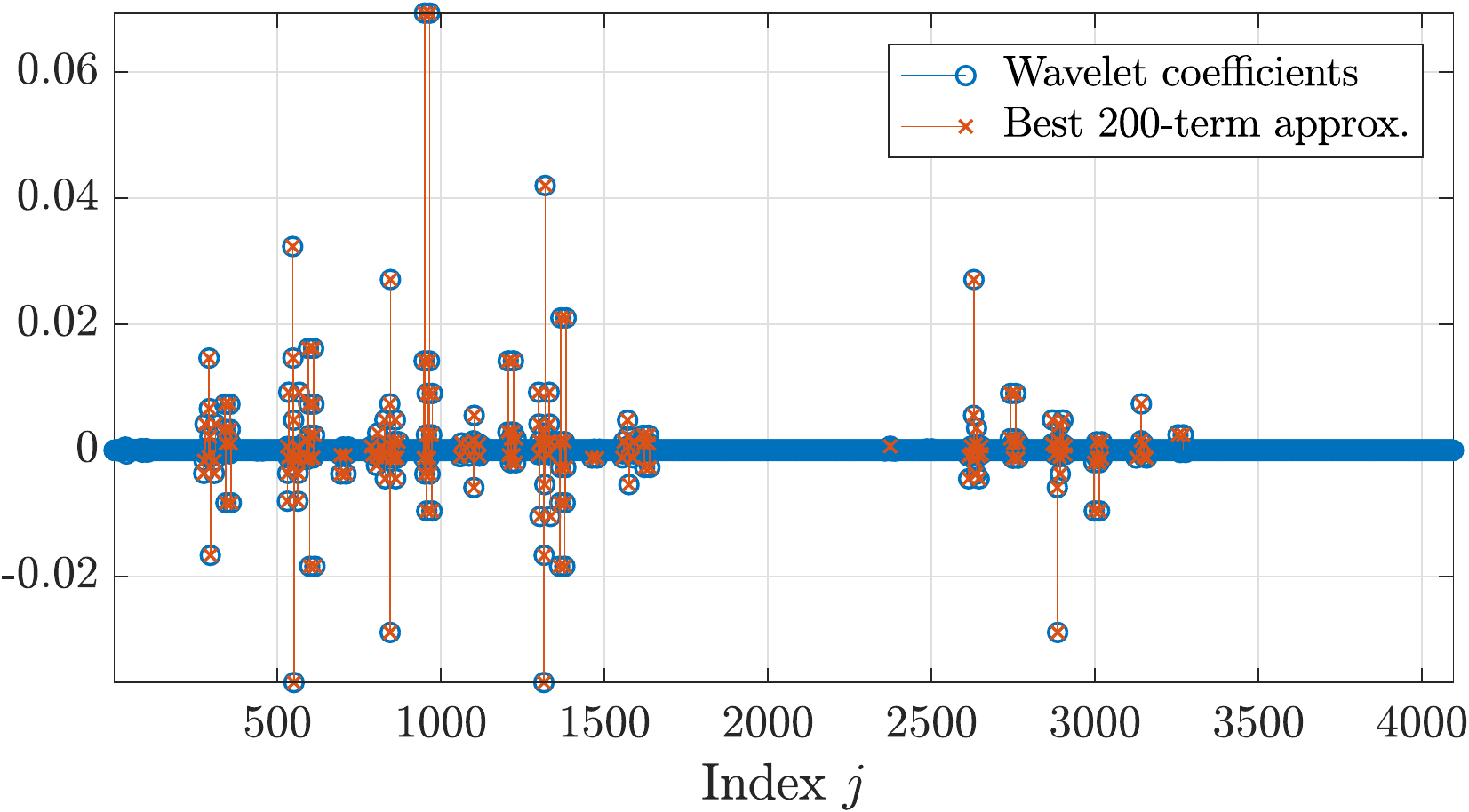} 
\caption{\label{Figure13}(3D ADR problem) Wavelet coefficients of the function $u_4$ and best $200$-term approximation for anisotropic (left) and isotropic (right) wavelets.}
\end{figure} 
The relative best $s$-term approximation error with respect to the $H^1(\mathcal{D})$-norm is $9.3\cdot 10^{-2}$ for the anisotropic wavelets and $2.5\cdot 10^{-2}$ for the isotropic wavelets. In this case, the isotropic tensorization is able to sparsify the function to a slightly better extent.

\paragraph{Sensitivity of the recovery error to the number of test functions.} 
We compare anisotropic and isotropic wavelets and uniform and nonuniform subsampling. We set $m = 200,300,400,500,600$. The box plots corresponding to $100$ runs of \corsing are shown in Figure~\ref{Figure14}.
\begin{figure}[t]
\centering
\includegraphics[height=4cm]{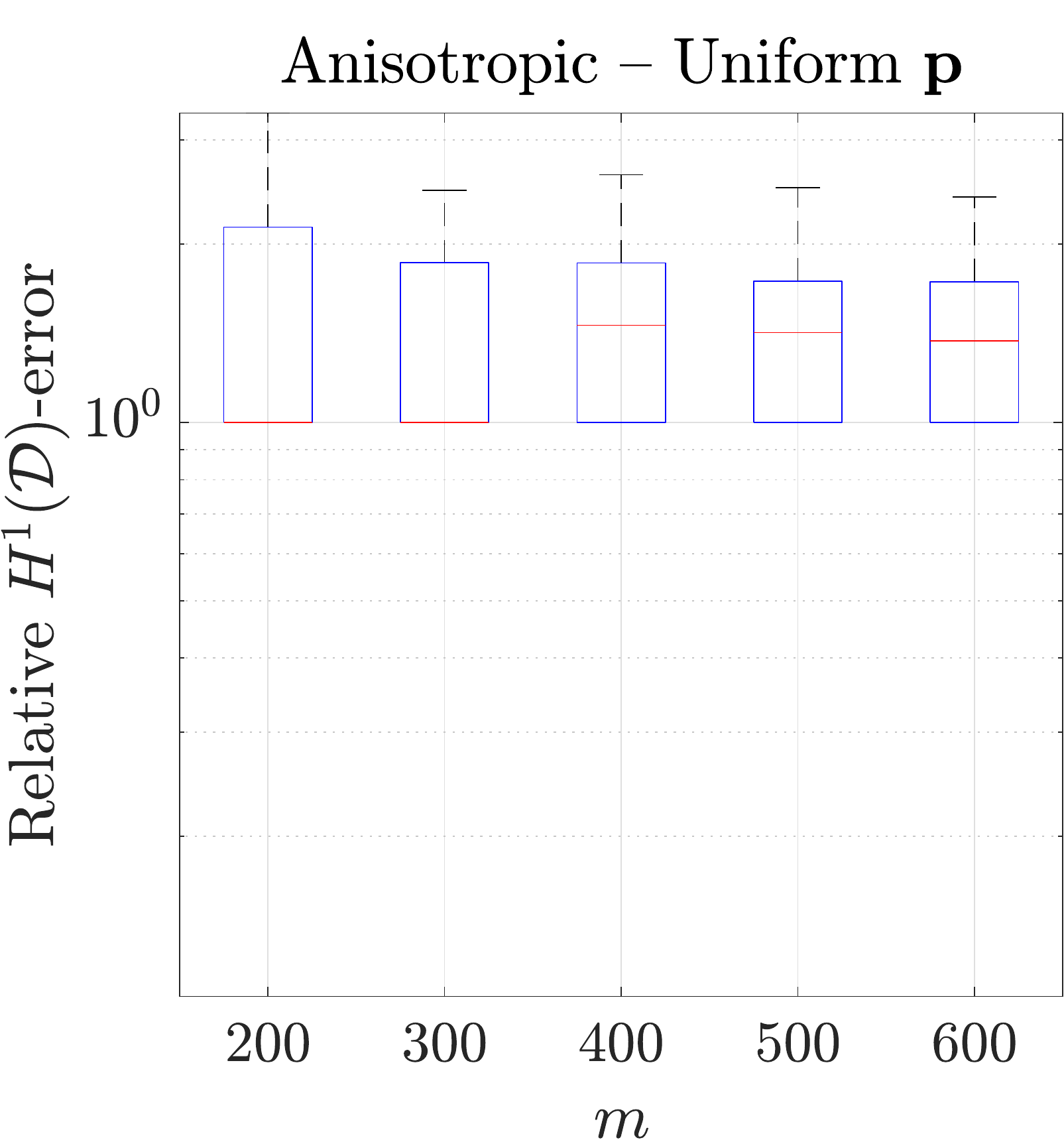} 
\includegraphics[height=4cm]{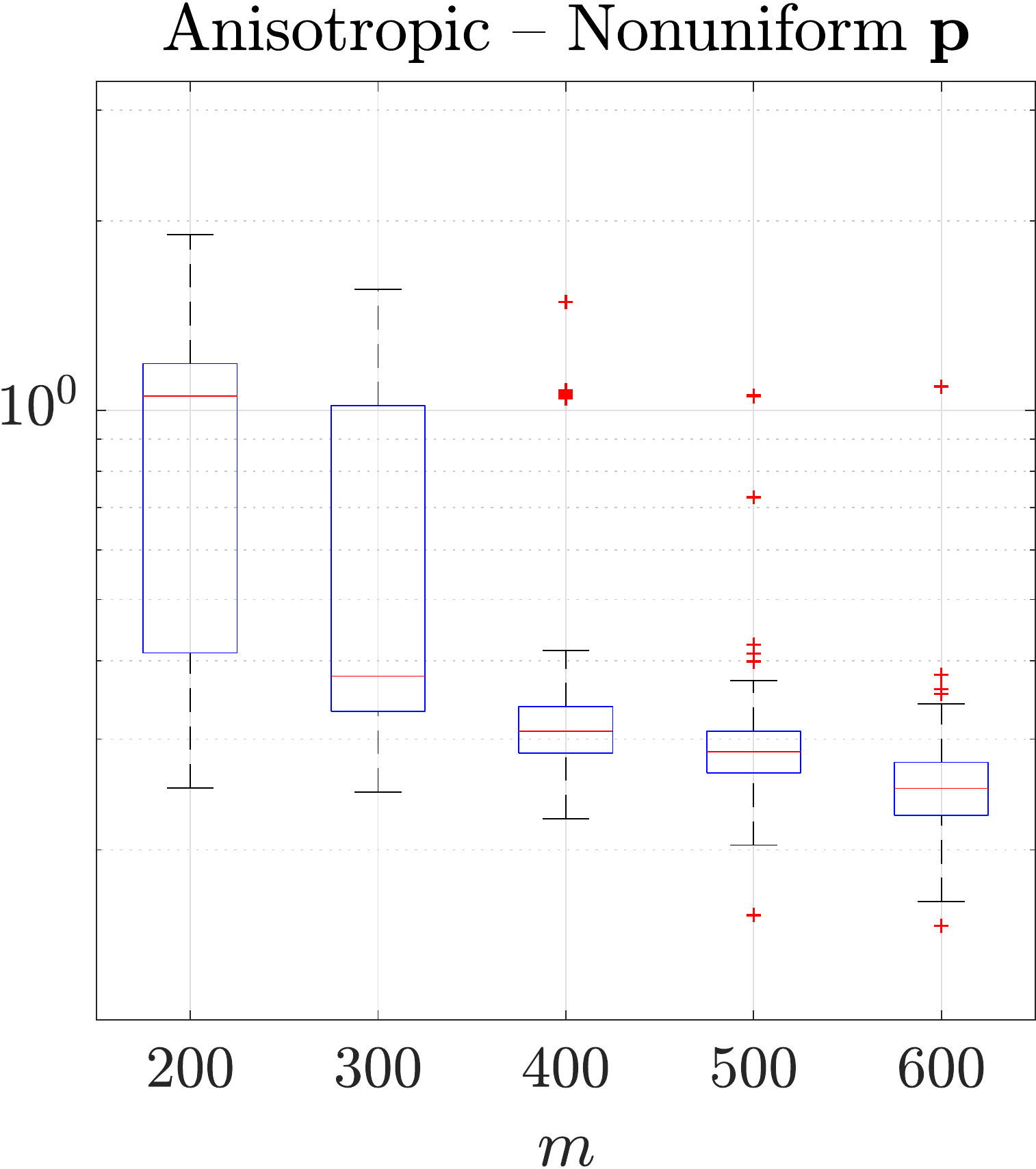} 
\includegraphics[height=4cm]{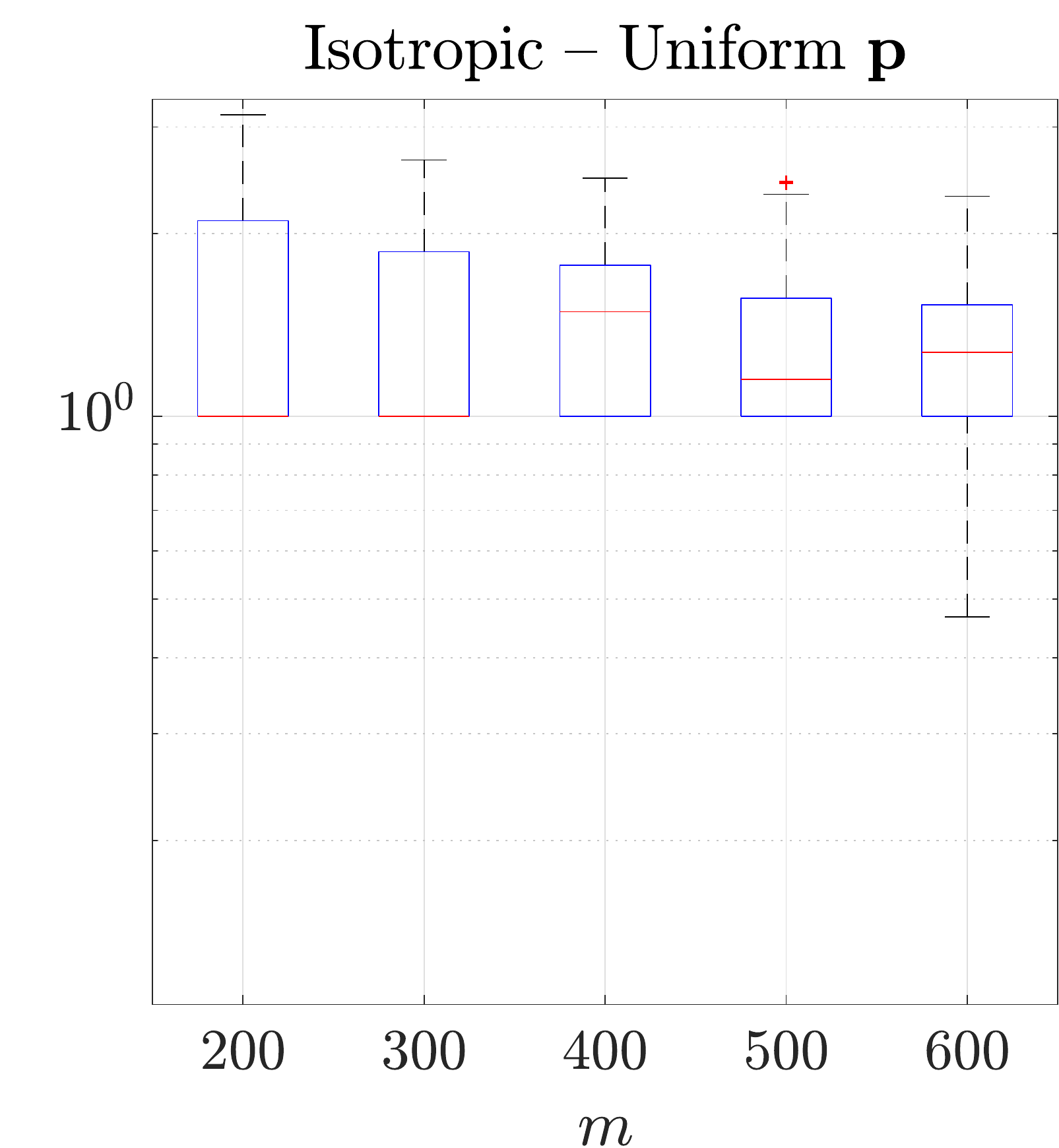} 
\includegraphics[height=4cm]{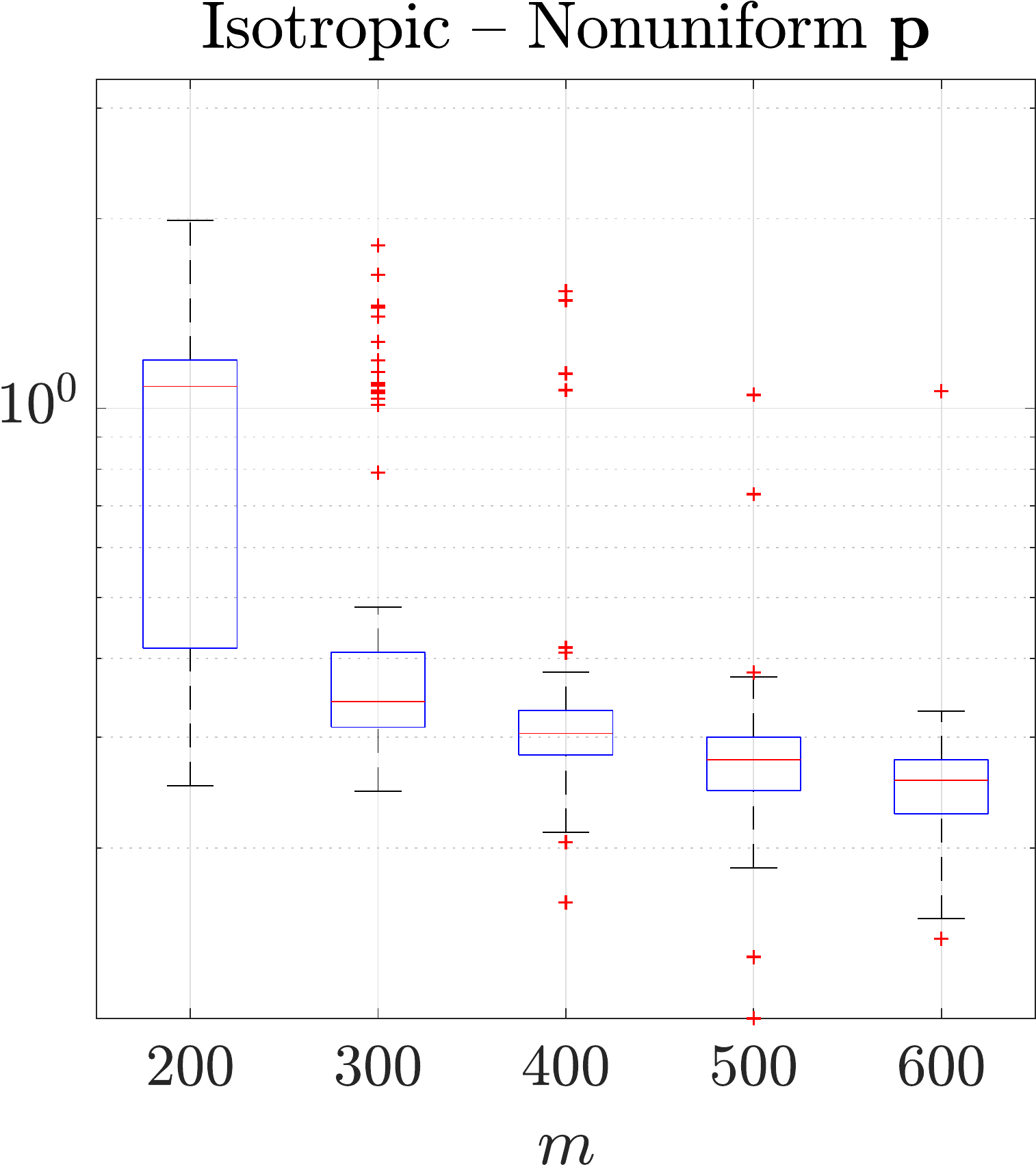}
\caption{\label{Figure14}(3D ADR problem) Box plot of the relative recovery error with respect to the $H^1(\mathcal{D})$-norm as a function of the number of tests $m$ with anisotropic and isotropic wavelets and for uniform and nonuniform subsampling.}
\end{figure}
The performance of anisotropic and isotropic wavelets is similar. However, uniform subsampling is not able to recover the solution at all. Comparing this results with those of the 2D case (Figures~\ref{Figure11} and \ref{Figure12}), we note how using a bad probability measure (i.e., the uniform) deteriorates the performance of the method more heavily as the dimension of the domain increases. Finally, this experiment confirms that the theoretical analysis carried out in Section~\ref{sec:localcoherence} turns out to be a useful tool for an effective implementation of the \corsing \WF method.

\section{Conclusions}
\label{sec:conclusion}

We presented a wavelet-Fourier discretization technique for ADR equations based on the Petrov-Galerkin method and on the compressed sensing paradigm, called \corsing \WF. We carried out a theoretical analysis of the method, which hinges on the concept of local $a$-coherence and provides practical recipes for a successful implementation of the method. Numerical experiments confirm the robustness and reliability of the \corsing \WF approach for $n$-dimensional ADR equations with $n = 1,2, 3$. 

In particular, we showed that the method achieves a recovery error comparable to the best $s$-term approximation error, and that the sampling measure based on the local $a$-coherence proposed here is able to successfully exploit the sparsity of the exact solution in the discretization (in contrast to other randomization strategies such as uniform random subsampling).

Several open issues still remain to be investigated. First, one needs to understand whether the sampling measure proposed in this paper is or is not the ``optimal'' one (in some sense to be specified). On the practical and computational side there is still a lot of work to be done. Although we compared the accuracy of the \textsf{CORSING} $\mathcal{WF}$ solution with the best $s$-term approximation error (Figure~\ref{fig:1D_s_vs_err}), the computational cost of the \textsf{CORSING} $\mathcal{WF}$ procedure with OMP reconstruction scales linearly in $N$ (i.e., the dimension of the trial basis of wavelet functions). Yet, adaptive wavelet methods can recover the best $s$-term approximation error accuracy with an optimal computational cost of $O(s)$ flops. This is a crucial issue to address in order to understand what is the real impact of the \textsf{CORSING} $\mathcal{WF}$. In this direction, a line of research currently under investigation is the use of techniques for sublinear-time compressed sensing recently proposed in \cite{choi2019sparse}. Finally, developing an effective and optimized implementation for \corsing \WF that takes advantage of the wavelet transform, the Fourier transform, and of the tensor product structure of the basis functions is still an open issue that has to be tackled to implement \corsing \WF in dimension $n > 3$.

\section*{Acknowledgements}

The first author acknowledges the support of the Postdoctoral Training Centre in Stochastics of the Pacifical Institute for the Mathematical Sciences (PIMS), the Centre for Advanced Modelling Science (CADMOS), and the Natural Sciences and Engineering Research Council of Canada through grant number 611675 for the financial support. The first author thanks Ben Adcock, Wolfgang Dahmen, and Holger Rauhut for very insightful discussions about approximation theory and compressed sensing. The fourth author acknowledges the research project GNCS-INdAM 2018 ``Tecniche di Riduzione di Modello per le Applicazioni Mediche'', which partially supported this research. The authors would also like to thank the two anonymous reviewers for their helpful and constructive comments.

\small
\bibliography{library} 
\bibliographystyle{plain}

\end{document}